\DeclareMathOperator{\Hom}{Hom}
\DeclareMathOperator{\tr}{tr}
\DeclareMathOperator{\Bil}{Bil}
\DeclareMathOperator{\End}{End}
\DeclareMathOperator{\GL}{GL}
\DeclareMathOperator{\Aut}{Aut}
\DeclareMathOperator{\Di}{Dic}
\DeclareMathOperator{\Stab}{Stab}
\DeclareMathOperator{\ResOp}{\downarrow}
\DeclareMathOperator{\IndOp}{\uparrow}
\newcommand{\catname}[1]{{\normalfont\textbf{#1}}}
\newcommand{\res}[2]{\ResOp_{#1}^{#2}}
\newcommand{\ind}[2]{\IndOp_{#1}^{#2}}
\newcommand{\Mod}[1]{\catname{${}_{#1}$Mod}}
\newcommand{\N}{\mathbb{N}}
\newcommand{\G}{\widehat{G}}
\newcommand{\F}{\mathcal{F}}
\newcommand{\R}{\mathbb{R}}
\newcommand{\C}{\mathbb{C}}
\newcommand{\CC}{\mathcal{C}}
\newcommand{\CR}{\mathfrak{C}}
\newcommand{\fn}[3]{#1 : #2 \rightarrow #3}
\newcommand{\twopartdef}[4]
{
	\left\{
		\begin{array}{ll}
			#1 & \mbox{if } #2 \\
			#3 & \mbox{if } #4
		\end{array}
	\right.
}
\newcommand{\CGG}{\C {\ast} \G}
\newcommand{\KGG}{{\mathbb K}\!\ast\!\G}
\newcommand{\bA}{{\mathbb A}}
\newcommand{\bB}{{\mathbb B}}
\newcommand{\bC}{{\mathbb C}}
\newcommand{\bF}{{\mathbb F}}
\newcommand{\bH}{{\mathbb H}}
\newcommand{\bK}{{\mathbb K}}
\newcommand{\bR}{{\mathbb R}}
\newcommand{\sA}{{\mathcal A}}
\newcommand{\sB}{{\mathcal B}}
\newcommand{\sC}{{\mathcal C}}
\newcommand{\sD}{{\mathcal D}}
\newcommand{\Id}{{\ensuremath{\mathrm{Id}}}}
\newcommand{\va}{\ensuremath{\mathbf{a}}\xspace}
\newcommand{\vb}{\ensuremath{\mathbf{b}}\xspace}
\newcommand{\ve}{\ensuremath{\mathbf{e}}\xspace}
\newcommand{\vg}{\ensuremath{\mathbf{g}}\xspace}
\newcommand{\vh}{\ensuremath{\mathbf{h}}\xspace}
\newcommand{\vi}{\ensuremath{\mathbf{i}}\xspace}
\newcommand{\vj}{\ensuremath{\mathbf{j}}\xspace}
\newcommand{\vk}{\ensuremath{\mathbf{k}}\xspace}
\newcommand{\vw}{\ensuremath{\mathbf{w}}\xspace}
\newcommand{\vx}{\ensuremath{\mathbf{x}}\xspace}
\newcommand{\vy}{\ensuremath{\mathbf{y}}\xspace}
\newcommand{\vz}{\ensuremath{\mathbf{z}}\xspace}
\newtheorem{thm}{Theorem}
\numberwithin{thm}{section}
\newtheorem{lem}[thm]{Lemma} 
\newtheorem{prop}[thm]{Proposition} 
\newtheorem{cor}[thm]{Corollary}
\theoremstyle{definition}
\newtheorem{defn}[thm]{Definition}
\newtheorem{eg}[thm]{Example}
\theoremstyle{remark}
\begin{document}
\title{Real Representations of $C_2$-Graded Groups: The Antilinear Theory}
\author{Dmitriy Rumynin}
\email{D.Rumynin@warwick.ac.uk}
\address{Mathematics Institute, University of Warwick, Coventry, CV4 7AL, UK
  \newline
\hspace*{0.31cm}  Associated member of Laboratory of Algebraic Geometry, National
Research University Higher School of Economics, Russia}
\author{James Taylor}
\email{james.h.a.taylor@outlook.com}
\address{Mathematics Institute, University of Warwick, Coventry,
  CV4 7AL, UK}
\date{June 15, 2020}
\subjclass[2010]{16S35, 20C99}
\keywords{finite group, Real representation, graded algebra, Frobenius-Schur indicator, KR-theory}
\thanks{The first author was partially supported within the framework of the HSE University Basic Research Program and the Russian Academic Excellence Project `5--100'.}
\thanks{We are indebted to to Matthew B. Young for useful conversations and interest in our work. We would like to thank Karl-Hermann Neeb for bringing the work of Wigner to our attention. We also thank the referee for the remark and the reference \cite{Faj} that we used in the introduction.}

\begin{abstract}
  We use the structure of finite-dimensional graded algebras 
  to develop the theory of antilinear representations of finite $C_2$-graded groups.
A finite $C_2$-graded group is a finite group with a subgroup of index 2.
In this theory the subgroup acts linearly, while the other coset acts antilinearly.
We introduce antilinear blocks, whose structure is a crucial component of the theory.
Among other things, we study characters and Frobenius-Schur indicators.
As an example, we describe the antilinear representations of the $C_2$-graded group $A_n \leq S_n$.
\end{abstract}

\maketitle


The first study of Real representation theory goes back to Wigner \cite{WIG}, where antilinear maps correspond to time reversal in Quantum Mechanics.
Independently it appears in equivariant KR-theory in works of Atiyah and Segal \cite{EKC} and Karoubi \cite{KAR}.
Following Atiyah \cite{BOT, KTR}, we use ``Real'' for objects with an involution and ``real'' for $\R$-related concepts. 

A $C_2$-graded group is a pair $G\leq\G$, where $G$ is a subgroup of index 2. 
Antilinear representations of $G$ were introduced by Wigner \cite{WIG} under the name of {\em corepresentations}: these are complex representations where $G$ acts linearly and the other coset $\G\setminus G$ acts antilinearly.
If $\G$ is the direct product $G\times C_2$,  the action of the element $(e_G,x)$, $x\neq e_{C_2}$ defines a complex conjugation commuting with the action of $G$. Hence, the Real representation theory becomes the real representation theory, a classical well-understood subject.

The most important work on the subject was written by Dyson \cite{Dyson}, although we found it only after finishing this paper.
Dyson used explicit matrix calculations to describe the structure of antilinear representations \cite{Dyson}. 
A number of studies over the years made progress on the split case (where $\G = G \rtimes C_2$, a semidirect product) \cite{EKC,KAR,Bor,ROZ,Fok}.
As pointed out by the referee, a mathematician only interested in the split case still needs the general case because $C_2$-graded subgroups of a split $C_2$-graded group are not split, in general (see \cite{Faj} where this fact plays an important role in KR-theory). 

The recent works by Neeb and \'{O}lafsson \cite{NEEB} and  Young \cite{R2R} reignited the interest in the general case.
Noohi and Young \cite{LTJ} gave a higher level treatment of the theory that included the case of twisted (projective) representations. 

Interestingly, the authors are often not aware of the past results, which appear in quite diverse areas.
Given this state of the art, our paper achieves several important aims. 
We develop a structural theory that gives clear conceptual proofs to all the results in the theory.
While doing it, we prove several new results.
Our comprehensive exposition will be useful even for those interested only in the split case.
Finally, we attempt to make accurate historic attributions for those of our results that previously appeared in the literature.

It is interesting that the Real 2-representation theory of $C_2$-graded groups \cite{R2R,BR2} is as developed as its 1-counterpart, the topic of the present paper. In 2-representation theory, the step from split to general $C_2$-graded groups goes back to Hahn \cite{HAH}, who extends hermitian Morita theory from algebras with involution to algebras with anti-structure.  In the split case the group algebra $\bC G$ becomes an algebra with involution and Real representation theory can be developed using modules over such algebras \cite{ROZ}. 
In the general case the group algebra $\bC G$ becomes an algebra with anti-structure. Since the representation theory of such algebras is not available, we work out Real representation theory from first principles.

Let us describe the content of the paper in detail. 
In Section~\ref{chap2} we introduce graded groups. 
For $C_2$ as the grading group, we describe the Real conjugation action and Real conjugacy classes.

In Section~\ref{chap3}, we develop the general theory of antilinear representations. Equivalently, these are modules over the skew group ring $\CGG$.
We call them A-representations and introduce A-characters (A stands for antilinear).
Next we prove Theorem~\ref{prop:centredim}, our first main result.
The A-character theory was also developed by Noohi and Young \cite{LTJ} but our proofs are substantially different.
To make the paper self-contained we include them. 

Next we describe Complexification and Realification, the functors that correspond to changing scalars in the classical theory. We also investigate the existence of invariant bilinear forms, proving our next main result, Theorem~\ref{biltypes}.
We finish the section with an explanation how Real theory specialises to real theory. 

In Section~\ref{Ch:blocks} we recall the classification of real $C_2$-graded division algebras and use it to describe real $C_2$-graded simple algebras.
We introduce the notion of an A-block.
Then we prove Dyson's Theorem (Theorem~\ref{possibleblocks}), describing the ten possible structures of an A-block.
These ten possible structures appear in Dyson's work \cite[Section IV]{Dyson} without the notion of an A-block.
His proof consists of matrix calculations, while we give a concise structural proof.

In Section~\ref{FS_indicator} we define the Real Frobenius-Schur indicator.
We learned this definition from Matthew B. Young.
The main result is the Bargmann-Frobenius-Schur Criterion (Theorem~\ref{FS}),
essentially asserting that the Real Frobenius-Schur indicator is the correct generalisation of the classical Frobenius-Schur indicator.
The result is already proved by Dyson \cite[Section VI]{Dyson}, who attributes it to a private communication from Bargmann.
In the split case this result was later proved by Borevi\v{c} and Devjatko \cite{Bor}, although we could not find the paper itself,
only the statement and its attribution in \cite{ROZ}. 
Notice that the standard classical proof does not generalise to Theorem~\ref{FS} because there is no analogue to the decomposition of bilinear forms into symmetric and alternating squares (cf. \ref{classical_split} for further details). 
Dyson's proof involves a direct calculation with tensors, while we deduce the result from the structure of A-block (Theorem~\ref{possibleblocks}) by an elementary calculation.

In Section~\ref{realchars} we undertake a further study of A-characters. 
The first main result, Theorem~\ref{dim_hom}, is the only overlap of this section with \cite{LTJ}, but our proof is again different. The second main result of this section is Theorem~\ref{table}, which brings together our earlier results and resembles similar summaries of the classical theory - see the appendix by the first author \cite{RUM}. The third main result is Theorem~\ref{square}. Its resemblance to its classical counterpart indicates that the Real theory is, indeed, a natural and useful generalisation of the real theory. 

Finally, in Section~\ref{A<S}, we work out the example of the symmetric Real structure on the alternating group $A_n$. 
We give an explicit description of the Real conjugacy classes in terms of cycle type. 
The Real symmetric theory of $A_n$ is different but intricately related to the real theory of $A_n$.

Throughout we use the following notation. $G$, $\G$ and $H$ are always finite groups, small bold letters $\vg$, $\vh$ etc. are group elements. $(\vh)_H$ is the conjugacy class of $\vh\in H$ and $C_H(\vh)$ is the centraliser. The group $C_2 = \{ 1, -1\}$ is multiplicative, $G \leq \G$ is an index 2 subgroup and $\vw$ is an element of $\G \setminus G$. 
$V$ is a $\C G$-module, $\vw \cdot V$ is the twisted $\C G $-module.
$W$ is an A-representation or a $\CGG$-module.

\section{$C_2$-Graded Groups}\label{chap2}
%
\subsection{Graded Groups}
Let $H$ be a group. By an \emph{$H$-graded group} we understand a short exact sequence
$$1 \rightarrow G \rightarrow \widehat{G} \xrightarrow{\pi} H \rightarrow 1,$$
where $G = \ker(\pi)$ is called the \emph{ungraded subgroup} of $\widehat{G}$.
\begin{eg}\label{eg:Hexamples}
\begin{enumerate}[label = (\roman*)]
\item
  For any group $G$, we have the \emph{standard graded group} $1 \rightarrow G \rightarrow G \times H \rightarrow H \rightarrow 1$. More generally, all graded groups are classified using Schreier's systems of factors.
\item
  Let $\bK$ be a field, $H$ a subgroup of its group of automorphisms. Let $V$ be a $\bK$-vector space.
  For each $\vh\in H$, consider the set $\GL^{\vh}(V)$ of invertible $\vh$-linear transformations. These are $\bK^H$-linear transformations $f$ such that $f(kv) = \vh(k)f(v)$ for all $k\in\bK$, $v\in V$. Their union is an $H$-graded group: 
  $$
  1 \rightarrow \GL(V) \rightarrow \GL^*(V)= \coprod_{\vh\in H}\GL^{\vh}(V) \xrightarrow{\pi} H \rightarrow 1\, .
  $$
\end{enumerate}
\end{eg}

A homomorphism of $H$-graded groups is a homomorphism of groups $\phi:\widehat{G}\rightarrow\widehat{K}$ such that $\pi (\phi(\vg))=\pi(\vg)$ for all $\vg\in \widehat{G}$. 

\subsection{Real Structures}
We refer to a $C_2$-graded group as a \emph{Real structure} on $G$. Here, $C_2=\{1,-1\}$ is the multiplicative cyclic group of order 2. We commonly refer to elements of $G$ as even and of $\G \setminus G$ as odd. 

\begin{eg}\label{eg:Z2examples}
\begin{enumerate}[label = (\roman*)]
\item
  We have four cyclic Real structures: $C_n \leq C_{n}\times C_2$, $C_n \leq C_{2n}$, $C_n \leq D_{2n}$ and $C_{2n} \leq \Di_{4n}$, where $\Di_{4n} = \langle \va ,\vx \mid \va^{2n}, \vx^2 = \va^n , \vx \va \vx^{-1} = \va^{-1} \rangle$ is the dicyclic group of order $4n$.
\item 
We call $A_n \leq S_n$ the symmetric Real structure on the alternating group $A_n$. 
\item
  Let $G$ be a group. Let $\text{Anti}(G)$ be the set of antiautomorphisms of $G$, and define $\Aut^*(G) = \Aut(G) \sqcup  \text{Anti}(G)$. Then $1 \rightarrow \Aut(G) \rightarrow \Aut^*(G) \rightarrow C_2 \rightarrow 1$ is a Real structure.  Note that $\Aut(G) = \text{Anti}(G)$ if and only if $\Aut(G) \cap \text{Anti}(G)\neq \emptyset$ if and only if $G$ is abelian. Hence, it is essential that the union is disjoint.
\end{enumerate}
\end{eg}
\subsection{Real Conjugacy Classes} \label{Real_CC}
If $G$ is the ungraded subgroup of a $C_2$-graded group $\G$, then there is an associated homomorphism of $C_2$-graded groups:
\begin{align*}
\fn{\psi}{\G}{\Aut^*(G)},  \quad \psi(\vz)(\vg) = \vz \vg^{\pi(\vz)}\vz^{-1}.
\end{align*}
The resulting action of $\G$ on $G$ is called \emph{Real conjugation}. We write $(\!(\vg)\!)$ for the Real conjugacy class of $\vg$, and $\CR_{\G}(\vg)$ for the stabiliser. Using the Orbit Stabiliser Theorem, we have the following, which applies to conjugation and Real conjugation both being actions of $\G$ on $G$ which extend the conjugation action of $G$.

\begin{lem}\label{lem:doubleaction}
Let $G \leq \G$ be an index $2$ subgroup, and $G$, $\G$ both act on a set $X$, with the action of $\G$ extending that of $G$. Then for any $x \in X$, we have two (mutually exclusive) cases:
\begin{enumerate}[label=(\roman*)]
\item
$(x)_{\G} = (x)_G \sqcup (\vz \cdot x)_G$ for any odd $\vz$, $\Stab_G(x) = \Stab_{\G}(x)$ and no odd element stabilises $x$.
\item
$(x)_{\G} = (x)_G$, $\Stab_G(x) \leq \Stab_{\G}(x)$ is an index $2$ subgroup, and $x$ is stabilised by some odd element.
\end{enumerate}
\end{lem}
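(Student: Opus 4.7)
The plan is to exploit the coset decomposition $\G = G \sqcup \vz G$ for any odd $\vz \in \G \setminus G$ (noting that $G \trianglelefteq \G$ since index $2$ subgroups are always normal). Applied to the $\G$-orbit of $x$, this gives
\[
(x)_{\G} = G \cdot x \cup \vz G \cdot x = (x)_G \cup (\vz \cdot x)_G.
\]
First I would verify a small compatibility statement: the $G$-orbit $(\vz \cdot x)_G$ does not depend on the choice of odd representative $\vz$. Indeed, replacing $\vz$ by $\vz \vg$ for $\vg \in G$ yields $\vz \vg \cdot x = \vz \cdot (\vg \cdot x)$, and writing any $\vg' \vz \cdot x = (\vg' \vz \vg \vz^{-1}) \cdot (\vz \cdot x)$ shows, using normality of $G$, that $(\vz \cdot y)_G = (\vz \cdot x)_G$ whenever $y \in (x)_G$.

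The argument then splits along the dichotomy $(x)_G \cap (\vz \cdot x)_G = \emptyset$ or $(x)_G = (\vz \cdot x)_G$. These are clearly mutually exclusive and exhaustive; which case we are in is detected by the sizes of the orbits, and the stabiliser statements follow from Orbit-Stabiliser. In case (i) the $\G$-orbit is the disjoint union of two $G$-orbits of equal size, so $|(x)_{\G}| = 2|(x)_G|$ combined with $[\G : G] = 2$ forces $|\Stab_{\G}(x)| = |\Stab_G(x)|$, hence $\Stab_G(x) = \Stab_{\G}(x)$; in particular no odd element stabilises $x$. Conversely, if some odd $\vw$ stabilised $x$ then $(\vw \cdot x)_G = (x)_G$, contradicting disjointness by the independence statement above. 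In case (ii), picking $\vg \in G$ with $\vg \cdot x = \vz \cdot x$ exhibits the odd element $\vg^{-1} \vz \in \Stab_{\G}(x) \setminus G$, and then $\Stab_G(x) = \Stab_{\G}(x) \cap G$ is an index $2$ subgroup of $\Stab_{\G}(x)$.

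There is no real obstacle here: the only subtlety is ensuring that the statement ``no odd element stabilises $x$'' in case (i) is genuinely forced (not just consistent), which is why I want the independence of $(\vz \cdot x)_G$ from the choice of $\vz$ before doing the case split. Everything else is an immediate application of Orbit-Stabiliser together with $[\G : G] = 2$.
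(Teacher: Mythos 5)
Your proof is correct and follows exactly the route the paper intends: the paper offers no written proof, merely noting that the lemma follows from the Orbit--Stabiliser Theorem, and your coset decomposition plus the disjoint-or-equal dichotomy for $G$-orbits is the standard way to carry that out. The only step you assert without justification is that the two $G$-orbits in case (i) have equal size, but this is immediate since $y \mapsto \vz \cdot y$ is a bijection from $(x)_G$ onto $\vz G \cdot x = G\vz \cdot x = (\vz \cdot x)_G$.
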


Applied to the conjugation action of $\G$ on $G$, for all $\vg \in G$, the two cases are:
\begin{enumerate}[label=(\roman*)]
\item[A1]
$(\vg)_{\G} = (\vg)_G \sqcup (\vz \vg \vz^{-1})_G$ for any odd $\vz$, $C_G(\vg) = C_{\G}(\vg)$, and no odd element commutes with $\vg$.
\item[A2]
$(\vg)_{\G} = (\vg)_G$, $C_G(\vg) \overset{2}{\leq} C_{\G}(\vg)$ and $\vg$ commutes with some odd element.
\end{enumerate}

For example, this gives the well-known splitting criterion for the conjugacy classes of the alternating group. For the Real conjugation action of $\G$ on $G$, for all $\vg \in G$, the two cases are:
\begin{enumerate}[label=(\roman*)]
\item[B1]
$(\!(\vg)\!) = (\vg)_G \sqcup (\vz \vg^{-1} \vz^{-1})_G$ for any odd $\vz$, $C_G(\vg) = \CR_{\G}(\vg)$, and no odd element has $\vz \vg^{-1} = \vg \vz$.
\item[B2]
$(\!(\vg)\!) = (\vg)_G$, $C_G(\vg) \overset{2}{\leq} \CR_{\G}(\vg)$ and for some odd $\vz$, $\vz \vg^{-1} = \vg \vz$.
\end{enumerate}

So given $\vg \in G$, we have four mutually exclusive cases regarding what can happen, and each of these cases is known to occur. For a conjugacy class $X$ of $G$, the set $X^{-1}$ is another conjugacy class, which is either disjoint or equal. If equal, we call $X$ self-inverse. We have a nice description of the Real conjugacy classes when all conjugacy classes of $\G$ are self-inverse.

\begin{prop}\label{pr:selfinverseReal}
Let $G \leq \G$ be a Real structure with all conjugacy classes of $\G$ self-inverse. Then for $\vg \in G$, 
\[
(\!(\vg)\!) = \twopartdef{(\vg)_{G}}{(\vg)_{G} \text{ is not self-inverse,}}{(\vg)_{\G}}{(\vg)_{G} \text{ is self-inverse.}}
\]
\end{prop}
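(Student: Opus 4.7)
The plan is to determine exactly which $G$-conjugacy class the element $\vz\vg^{-1}\vz^{-1}$ lies in, since by the B1/B2 dichotomy from Section~\ref{Real_CC} applied to the Real conjugation action we always have $(\!(\vg)\!) = (\vg)_G \cup (\vz\vg^{-1}\vz^{-1})_G$ for any fixed odd $\vz$. The hypothesis that $\G$-conjugacy classes are self-inverse gives an element $\vx \in \G$ with $\vg^{-1} = \vx\vg\vx^{-1}$, hence the useful rewriting
\[
\vz\vg^{-1}\vz^{-1} = (\vz\vx)\,\vg\,(\vz\vx)^{-1},
\]
so the parity of $\vx$ controls whether this is a $G$-conjugate or an odd conjugate of $\vg$.

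First I would treat the case where $(\vg)_G$ is not self-inverse. Here $\vg^{-1}\notin (\vg)_G$, so the element $\vx$ above cannot be even, which forces $\vx$ odd and therefore $\vz\vx$ even. Thus $\vz\vg^{-1}\vz^{-1}\in (\vg)_G$, and so $(\!(\vg)\!) = (\vg)_G$, matching the first clause of the proposition.

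Next I would handle the case where $(\vg)_G$ is self-inverse. Pick $\vh \in G$ with $\vg^{-1} = \vh\vg\vh^{-1}$; then $\vz\vg^{-1}\vz^{-1} = (\vz\vh)\vg(\vz\vh)^{-1}$ with $\vz\vh$ odd. I would now split according to the A1/A2 dichotomy for the ordinary conjugation action of $\G$ on $G$. In case A2, an odd element $\vw$ centralises $\vg$, so for any odd $\vy$ the element $\vy\vg\vy^{-1} = (\vy\vw)\vg(\vy\vw)^{-1}$ is an even conjugate of $\vg$; in particular $\vz\vg^{-1}\vz^{-1} \in (\vg)_G = (\vg)_\G$, and $(\!(\vg)\!) = (\vg)_\G$. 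In case A1, we have $(\vg)_\G = (\vg)_G \sqcup (\vy\vg\vy^{-1})_G$ for any odd $\vy$, and the element $(\vz\vh)\vg(\vz\vh)^{-1}$ lies in the odd piece; assembling the two pieces gives $(\!(\vg)\!) = (\vg)_\G$ again.

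The main obstacle — really the only delicate point — is keeping track of parities in the self-inverse case and not conflating the conjugation A1/A2 dichotomy with the Real conjugation B1/B2 dichotomy; both are invoked, but for different actions. Everything else is a direct application of Lemma~\ref{lem:doubleaction} and a parity count in $\G/G \cong C_2$.
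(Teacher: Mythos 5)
Your proof is correct and follows essentially the same route as the paper's: both arguments reduce to the A1/A2 and B1/B2 dichotomies of Lemma~\ref{lem:doubleaction} together with a parity count on the element conjugating $\vg$ to $\vg^{-1}$. The only cosmetic difference is that in the self-inverse, type-A1 case you identify the second $G$-class of $(\vg)_{\G}$ directly as $((\vz\vh)\vg(\vz\vh)^{-1})_G$, whereas the paper concludes by comparing cardinalities via $\left|(\!(\vg)\!)\right| = 2\left|(\vg)_G\right|$.
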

\begin{proof}
We have that $(\vg)_G \subset (\!(\vg)\!) \subset (\vg)_{\G} \cup (\vg^{-1})_{\G} = (\vg)_{\G}$. If $(\vg)_G$ is of type A2, then $(\vg)_G = (\vg)_{\G}$, and thus $(\!(\vg)\!) = (\vg)_{\G}$. Otherwise, $(\vg)_G$ is of type A1. If $(\vg)_G$ is not self-inverse, then as $(\vg)_{\G}$ is self-inverse, there is some odd $\vy$ element with $\vy\vg^{-1}\vy^{-1} = \vg$, so we are in case B2 and $(\!(\vg)\!) = (\vg)_{G}$. If $(\vg)_G$ is self-inverse, we are in case B1 (because no odd element commutes with $\vg$), thus $\left|(\!(\vg)\!)\right| = 2\left| (\vg)_G \right|$, hence $(\!(\vg)\!) = (\vg)_{\G}$.
\end{proof}


\section{The Antilinear Theory}\label{chap3}
\subsection{Graded and Antilinear Representations} \label{gr_anti_rep}
Let $\bK$ be a field, $H$ a subgroup of its group of automorphisms.
By {\em a graded representation} of an $H$-graded group $G\leq\G$,
we understand a pair $(V,\rho)$ where $V$ is a $\bK$-vector space, $\rho: \G\rightarrow \GL^\ast (V)$ is a homomorphism of $H$-graded groups. \emph{A homomorphism of graded representations} is a $\bK^H$-linear $\G$-equivariant map.

In other words, these are just modules over the skew group algebra $\KGG$, a $\bK$-vector space with a basis $\G$ and multiplication $a \vg \cdot b \vh = a \pi(\vg)(b) \vg\vh$. Notice that $\KGG$ is a $\bK^H$-algebra but not a $\bK$-algebra. Thus, the category of graded representations is $\bK^H$-linear. 
\begin{prop}\label{modulerepequiv}
The functors
\begin{align*}
& \: \KGG\text{-modules} \:& & &\longleftrightarrow& & &\: \text{Graded representations of } G \leq \G \:& \\
&(V, \: (a \vg) \cdot v \coloneqq a \rho(\vg)(v))& & &\longleftrightarrow& & &(V, \: av \coloneqq (a e)\cdot v, \: \rho(\vg)(v) \coloneqq \vg \cdot v)&,
\end{align*}
which are both the identity on morphisms, are isomorphisms of categories.
\end{prop}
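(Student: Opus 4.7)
The proof is a bookkeeping exercise comparing two packagings of the same data, so my plan breaks into three routine verifications: that each functor lands in the correct category, that the two compositions are identities on objects, and that the morphism sets are identified under the identity map.

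For the forward direction, given a $\KGG$-module $V$, I would endow $V$ with the $\bK$-vector space structure arising from the subalgebra $\bK \cdot e \subset \KGG$ (which is isomorphic to $\bK$ since $\pi(e)$ is trivial), and define $\rho(\vg)(v) := \vg \cdot v$. The multiplication rule in $\KGG$ then yields
\[
\rho(\vg)(av) = \vg \cdot ((ae) \cdot v) = (\vg (ae)) \cdot v = (\pi(\vg)(a) \vg) \cdot v = \pi(\vg)(a) \rho(\vg)(v),
\]
so $\rho(\vg) \in \GL^{\pi(\vg)}(V)$, while associativity of the module action gives $\rho(\vg\vh) = \rho(\vg) \rho(\vh)$, making $\rho$ a homomorphism of $H$-graded groups.

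For the backward direction, given $(V, \rho)$, I would extend the formula $(a\vg) \cdot v := a \rho(\vg)(v)$ additively to all of $\KGG$. The only module axiom requiring a genuine calculation is associativity, which reduces to
\[
((a\vg)(b\vh)) \cdot v = (a\pi(\vg)(b) \vg\vh) \cdot v = a\pi(\vg)(b) \rho(\vg\vh)(v) = a\rho(\vg)(b\rho(\vh)(v)) = (a\vg) \cdot ((b\vh) \cdot v),
\]
where the third equality uses the $\pi(\vg)$-linearity of $\rho(\vg)$ together with $\rho(\vg\vh) = \rho(\vg)\rho(\vh)$.

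The two compositions are identities by inspection, since the formulas defining one from the other are mutually inverse: starting from a module and going round the loop, $(a\vg) \cdot v$ is reconstructed as $a \rho(\vg)(v) = (ae)\cdot(\vg \cdot v) = (a\vg) \cdot v$ using the original action. For the morphism matching, $\KGG$-linearity of a map $f : V \to W$ decomposes, by restricting to the subalgebra $\bK \cdot e$ and to the group $\G$, into $\bK$-linearity together with $\G$-equivariance, which is exactly the data of a homomorphism of graded representations. I do not anticipate any substantive obstacle; the $\pi$-twisted multiplication rule in $\KGG$ and the $\pi(\vg)$-twisted linearity of $\rho(\vg)$ are mirror formulations of the same constraint, and the proposition is the standard equivalence between modules over a skew group ring and the corresponding equivariant actions, specialised to the graded-group setting.
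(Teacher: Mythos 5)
Your verification is correct and is exactly the routine check the paper leaves implicit: the paper states Proposition~\ref{modulerepequiv} without proof, and your three steps (well-definedness of each functor via the twisted multiplication rule, mutual inversion on objects, matching of morphisms) are the expected argument. The computations in both directions are right, including the key point that $\rho(\vg)(av)=\pi(\vg)(a)\rho(\vg)(v)$ and the associativity check encoding $a\vg\cdot b\vh = a\pi(\vg)(b)\vg\vh$.

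One point in the morphism matching deserves care. A $\KGG$-module map commutes with the subalgebra $\bK e$ and is therefore $\bK$-linear, as you say; but the paper literally defines a homomorphism of graded representations as a $\bK^H$-linear $\G$-equivariant map, and $\bK^H$-linearity plus $\G$-equivariance does \emph{not} in general imply $\bK$-linearity. For instance, take $\bR\leq\bC$, the graded group $C_1\leq C_2$, and $V=\bC$ with the odd generator acting by complex conjugation $J$; then $J$ itself is $\bR$-linear and $\G$-equivariant but not $\bC$-linear, hence not a $\CGG$-module map. So your claim that $\bK$-linearity plus equivariance is ``exactly the data of a homomorphism of graded representations'' tacitly reads the paper's definition as requiring $\bK$-linearity. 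That reading is the intended one — later the paper sets $\Hom_A(W_1,W_2)=\Hom_{\C}(W_1,W_2)^{\G}$ — but as the definition is literally written, the two categories would have different hom-sets, so it is worth stating explicitly that morphisms of graded representations are taken to be $\bK$-linear.
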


If the characteristic of $\bK$ does not divide $|\G|$, then $\KGG$ is a semisimple (\cite[Cor. 0.2(1)]{MON}) $\bK^H$-algebra.
By the Artin-Wedderburn Theorem, $\KGG$ is isomorphic to a finite product of matrix algebras over finite-dimensional division $\bK^H$-algebras. It would be interesting to investigate this decomposition further. For instance, does Unger's algorithm \cite{Ung} work in this setting?

Our particular interest lies with $C_2$-graded groups and $\bR\leq \bC$ as $\bK^H\leq \bK$. 
In this case we refer to a $C_2$-graded representation as \emph{an antilinear representation} or  \emph{an A-representation}.
By \emph{an A-homomorphism} we understand of homomorphism of $C_2$-graded representations.
Notice that the set of all A-homomorphisms $\Hom_A(V,W)$ is a real vector space.

\subsection{Antilinear Maps and Matrices}
We use the notation
\begin{align*}
\prescript{\epsilon}{}{A} &= \twopartdef{A}{\epsilon = 1}{\overline{A}}{\epsilon = -1} & &\text{ and } &  \prescript{\epsilon}{}{\lambda} &= \twopartdef{\lambda}{\epsilon = 1}{\overline{\lambda}}{\epsilon = -1,}
\end{align*}
where $\epsilon \in C_2$, $A \in M_n(\C)$ and $\lambda \in \C$. Let us establish the correspondence between antilinear maps and matrices.
A choice of basis in an $n$-dimensional vector space $V$ gives a bijection
$$
\End_{\C}(V) \ni T \longleftrightarrow [T]\in M_n(\C)\, .
$$
A similar bijection exists for antilinear maps
$$
\End^{-1}_{\C}(V)\coloneqq \Hom_{\bC} (\overline{V},V) \ni U \longleftrightarrow [U]\in M_n(\C)\, ,
$$
where the chosen basis is used in both $V$ and $\overline{V}$. 
In elementary terms, a matrix $M$ determines an antilinear operator $v\mapsto M \overline{v}$ of $\bC^n$.

Define $\GL_n^*(\C) = \{A_1 \mid A \in \GL_n(\C)\} \sqcup \{A_{-1} \mid A \in \GL_n(\C)\}$ with multiplication 
\[
A_{\epsilon}B_{\delta} = (A \cdot {}^{\epsilon}B)_{\epsilon\delta} = \twopartdef{(A B)_{\epsilon\delta}}{\epsilon = 1}{(A \overline{B})_{\epsilon\delta}}{\epsilon = -1.}
\]
There is a canonical embedding $\GL_n(\C) \hookrightarrow \GL_n^*(\C)$ via $A \mapsto A_1$. Notice the inverses in this group:
\begin{equation} \label{inverses_group}
(A_{\epsilon})^{-1} = ({}^{\epsilon}(A^{-1}))_\epsilon \, ,
\mbox{ thus, }
(A_{1})^{-1} = (A^{-1})_1 \, , \ (A_{-1})^{-1} = (\overline{A^{-1}})_{-1} \, .
\end{equation}
The choice of basis of $V$ yields a graded group isomorphism $\GL^*(V)\xrightarrow{\cong}\GL_n^*(\C)$, sending
$Q \in \GL^*(V)$ to its matrix $[Q]_{\pi(Q)}\in\GL_n^*(\C)$. If $A$ is the change of basis matrix and $[Q]^\prime_{\pi(Q)}$ is the matrix of $Q$ in the new basis, then the change of basis formulas are 
$$
[T]_1 \Rightarrow [T]^{\prime}_1=(A [T] A^{-1})_1 \, , \ \
[U]_{-1} \Rightarrow [U]^{\prime}_{-1}=(A [U] \overline{A^{-1}})_{-1} \, .
$$
Finally, we define $A_{\epsilon}v \coloneqq A ({}^{\epsilon}v)$ for $A_{\epsilon} \in \GL_n^*(\bC)$ and $v \in \bC^n$.
%

\subsection{A-characters}
At this point it is convenient to choose a basis and to work with a matrix A-representation, a $C_2$-graded group homomorphism $\fn{\rho}{\G}{\GL^*(\C)}$. Its restriction to $G$ is a complex representation of $G$.
We define {\em the A-character} of $\rho$ as the character of $\rho|_{G}$:
$$
\fn{\chi}{G}{\C}\, , \ \chi(\vg) = \tr( \rho (\vg))\, .
$$
Notice that this formula gives only an ill-defined map $\G \rightarrow \C$ because the trace of $\rho(\vz)$ for $\vz \in \G \setminus G$ is not invariant under a basis change. On the bright side, the A-character uniquely determines the A-representation (Corollary~\ref{Achar_determines}). We call a function $\fn{\psi}{G}{\C}$ a \emph{Real class function} if $\psi$ is constant on Real conjugacy classes. 

\begin{lem}
A-characters are Real class functions.
\end{lem}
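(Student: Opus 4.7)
The plan is as follows. Fix a matrix A-representation $\fn{\rho}{\G}{\GL_n^*(\bC)}$ with A-character $\chi$. By definition $\chi = \tr \circ \rho|_G$, so $\chi$ is already a class function on $G$ in the usual sense, i.e.\ invariant under ordinary $G$-conjugation. In view of the description of Real conjugacy classes in Section~\ref{Real_CC} (cases B1 and B2), the Real orbit of $\vg \in G$ is the union of the $G$-orbit of $\vg$ and the $G$-orbit of $\vz \vg^{-1} \vz^{-1}$ for any odd $\vz$. Therefore it suffices to prove that
\[
\chi(\vz \vg^{-1} \vz^{-1}) = \chi(\vg)
\quad \text{for every } \vg\in G \text{ and every odd } \vz \in \G\setminus G.
\]

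The next step is a direct computation in $\GL_n^*(\bC)$. Writing $\rho(\vg) = B_1$ and $\rho(\vz) = A_{-1}$ with $A, B \in \GL_n(\bC)$, the inverse formulas \eqref{inverses_group} and the multiplication rule give
\[
\rho(\vz \vg^{-1} \vz^{-1}) = A_{-1} \cdot (B^{-1})_1 \cdot (\overline{A^{-1}})_{-1}
= \bigl( A\, \overline{B^{-1}}\, \overline{\,\overline{A^{-1}}\,}\bigr)_{1}
= (A\, \overline{B^{-1}}\, A^{-1})_1,
\]
which lies in the even component, as it must since $\vz\vg^{-1}\vz^{-1}\in G$. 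Taking traces, cyclicity yields
\[
\chi(\vz \vg^{-1} \vz^{-1}) = \tr\bigl(A\, \overline{B^{-1}}\, A^{-1}\bigr) = \tr\bigl(\overline{B^{-1}}\bigr) = \overline{\tr(B^{-1})} = \overline{\chi(\vg^{-1})}.
\]

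To finish, I would invoke the standard identity $\chi(\vg^{-1}) = \overline{\chi(\vg)}$ for any finite-dimensional complex character, which holds because $\rho(\vg)$ has finite order and hence eigenvalues that are roots of unity. Substituting this gives $\chi(\vz\vg^{-1}\vz^{-1}) = \overline{\overline{\chi(\vg)}} = \chi(\vg)$, as required. I do not anticipate a genuine obstacle here; the only delicate point is keeping track of the complex conjugates introduced by the odd factor $A_{-1}$ in the twisted multiplication of $\GL_n^*(\bC)$, which is precisely why the conjugation and inverse cancel out to yield a real class function identity.
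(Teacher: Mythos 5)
Your proof is correct and takes essentially the same route as the paper's: a direct computation with the twisted multiplication in $\GL_n^*(\bC)$, cyclicity of the trace, and the identity $\chi(\vg^{-1})=\overline{\chi(\vg)}$ (which the paper leaves implicit in its final equality, while you state it explicitly). The only cosmetic difference is that you treat even and odd conjugators separately, whereas the paper handles both uniformly via the exponent $\pi(\vh)$.
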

\begin{proof}
Let $\vg, \vh \in \G$. Then $[\vh \vg^{\pi(\vh)} \vh^{-1}]_1 = [\vh]_{\pi(\vh)}\cdot ({}^{\pi(\vh)}([\vg^{\pi(\vh)} \vh^{-1}]))_{\pi(\vh)} =  [\vh]_{\pi(\vh)} \cdot ({}^{\pi(\vh)}([\vg]^{\pi(\vh)}))_1\cdot ({}^{\pi(\vh)}[\vh^{-1}])_{\pi(\vh)}$. Therefore, as $\tr(AB) = \tr(BA)$ for matrices $A,B$, and $[\vh]_{\pi(\vh)}({}^{\pi(\vh)}[\vh^{-1}])_{\pi(\vh)} = (I)_1$, we have that
\begin{align*}
\tr([\vh \vg^{\pi(\vh)} \vh^{-1}]_1)
&= \tr( [\vh]_{\pi(\vh)} \cdot ({}^{\pi(\vh)}([\vg]^{\pi(\vh)}))_1\cdot ({}^{\pi(\vh)}[\vh^{-1}])_{\pi(\vh)}) \\
&= \tr( [\vh]_{\pi(\vh)} \cdot ({}^{\pi(\vh)}[\vh^{-1}])_{\pi(\vh)} \cdot ({}^{\pi(\vh)}([\vg]^{\pi(\vh)}))_1) \\
&= \tr(({}^{\pi(\vh)}([\vg]^{\pi(\vh)}))_1) = \tr([\vg]_1). \qedhere
\end{align*}
\end{proof}

\subsection{Operations on Antilinear Representations}
Standard constructions with complex representations extend to A-representations. Let $(W,\rho)$ and $U$ be A-representations.

The space of $\C$-linear maps
$\Hom_{\C}(U,W)$,
is an A-representation via $\vg \cdot f \coloneqq \vg f \vg^{-1}$. In particular, the dual $U^* = \Hom_{\C}(U,\C)$ of any A-representation is also an A-representation. In terms of matrices, if $\vg$ acts as a matrix $A_x$, then $\vg$ acts on the dual space as $((A^{-1})^T)_x$. 

The tensor product $U \otimes_{\C} W$ is an A-representation via $\vg(u \otimes w) \coloneqq \vg u \otimes \vg w$. If $U=W$, the decomposition into symmetric and alternating squares is compatible with the action of odd elements, so these are A-subrepresentations.

If $\vz \in \G$, then $\vz \cdot W$ is an A-representation
$$ \vz \cdot W\, , \ \vg\mapsto \rho (\vz \vg \vz^{-1})\, .$$
Notice that $\rho(\vz)$ is an isomorphism $W \rightarrow \vz \cdot W$ if $\vz \in G$.
However, if $\vz \not\in G$, then it is an isomorphism $\overline{W} \rightarrow \vz \cdot W$. Therefore, any A-character $\chi$ satisfies $\overline{\chi} = \vz \cdot \chi$. This is a clear necessary condition for a complex representation to be extendible to an A-representation, but it is not sufficient (cf. Theorem~\ref{biltypes}).

Suppose $\G$ acts on a set $X$, the action is denoted $\star$. Then the free vector space is an A-representation with $\vg \cdot x = \vg \star x$, extended linearly for even elements and antilinearly for odd elements.

This can be constructed in another way. Take the complex representation $(\C X , \mu)$ and notice that in the basis $X$ the matrix of $\mu (\vg)$ (written $[\mu(\vg)]$) is real valued (in fact, it is $\{0,1\}$-valued). 
Hence, the map $[\G \rightarrow \GL^*_n(\C)]:\vg \mapsto [\mu(\vg)]_{\pi(\vg)}$ is a homomorphism of graded groups
and an A-representation.

This trick works on any real-valued matrix  representation of $\G$. It is an example of the matrix version of the induction functor from $\bR \G$-modules to $\CGG$-modules, cf. \ref{compandreal}.

\subsection{Skew Group Algebra}
Let us examine the centre $Z(\CGG)$ of the skew group algebra $\CGG$, defined in Section~\ref{gr_anti_rep}
It has real dimension $r + 2s+ t$, where $r$, $s$ and $t$ are the number of irreducible A-representations of type $\R$, $\C$ and $\mathbb{H}$ respectively. To relate this quantity to the number of conjugacy classes of $G$, we need an explicit form of central elements. The following is a special case of \cite[Prop. 3.13]{LTJ}.

\begin{lem}\label{lem:centralcrit}
An element $x \in \CGG$ is central if and only if it is of the form $x = \sum_{\vg \in G} c_\vg \vg$, where $c_\vg = a_\vg + i b_\vg \in \C$, $a_\vg ,b_\vg\in\bR$ and for any $\vz \in \G$ and $\vg \in G$, $c_{\vz \vg \vz^{-1}} =  {}^{\pi(\vz)} c_\vg$, or equivalently, $a_{\vz \vg \vz^{-1}} = a_\vg$ and $b_{\vz \vg \vz^{-1}} = \pi(\vz) b_\vg$.
\end{lem}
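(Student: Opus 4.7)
The plan is to write a general element $x = \sum_{\vg \in \G} c_\vg \vg \in \CGG$ and impose centrality by testing against a convenient generating set for $\CGG$ as an $\bR$-algebra. The complex scalars $a \cdot e$ (for $a \in \C$) together with the basis elements $\vh \in \G$ generate everything, and they split the problem into two clean calculations.

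First I would handle commutation with scalars. From the multiplication rule $a \vg \cdot b \vh = a \cdot {}^{\pi(\vg)}b \cdot \vg \vh$, one computes $x \cdot a = \sum_{\vg} c_\vg \cdot {}^{\pi(\vg)}a \cdot \vg$ whereas $a \cdot x = \sum_\vg a c_\vg \vg$. Equality for every $a \in \C$ forces $c_\vg (a - \overline{a}) = 0$ for each odd $\vg$; specialising to $a = i$ gives $c_\vg = 0$ for all $\vg \in \G \setminus G$. Hence a central element is automatically supported on $G$, justifying the restricted sum in the statement.

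Second, I would test commutation with a group element $\vh \in \G$. Using $\vh \vg = (\vh \vg \vh^{-1}) \vh$ to rewrite $\vh \cdot x$ and comparing coefficients of each basis element yields the equivalence: $x$ commutes with every $\vh$ if and only if $c_{\vz \vg \vz^{-1}} = {}^{\pi(\vz)} c_\vg$ for all $\vz \in \G$ and $\vg \in G$. Finally, decomposing $c_\vg = a_\vg + i b_\vg$ with $a_\vg, b_\vg \in \bR$ and noting that complex conjugation fixes the real part and negates the imaginary part splits this single complex condition into the two real conditions $a_{\vz \vg \vz^{-1}} = a_\vg$ and $b_{\vz \vg \vz^{-1}} = \pi(\vz) b_\vg$.

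There is no serious obstacle here; the whole lemma is a direct calculation. The only points requiring care are keeping track of the twist $\pi(\vg)$ when commuting group elements past scalars, and the reindexing needed to match coefficients in $x \vh$ versus $\vh x$. Once these are handled, both directions of the \emph{if and only if} are immediate.
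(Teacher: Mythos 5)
Your proof is correct and follows essentially the same route as the paper: the paper tests centrality against a general monomial $c\vz$ in a single computation, obtaining $c_{\vz \vg \vz^{-1}}({}^{\pi(\vg)}c) = c({}^{\pi(\vz)}c_\vg)$ and reading off both the vanishing of odd coefficients and the twisted-conjugation condition at once, whereas you split the same test into scalars and group elements. This is a purely organizational difference; both are direct coefficient comparisons against a generating set of $\CGG$.
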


\begin{proof}
The element $x = \sum_{\vg \in \G} c_\vg \vg$ is central if and only if for all $c \vz \in \CGG$ with $c \in \C$, $\vz \in \G$, $x (c \vz) = (c \vz)x$, which is equivalent to $c_{\vz \vg \vz^{-1}} ({}^{\pi(\vg)}c) = c ({}^{\pi(\vz)} c_\vg)$. Thus, if $\vg$ is odd, then $c_\vg = 0$, and if $\vg$ is even, then $c_{\vz \vg \vz^{-1}} =  {}^{\pi(\vz)} c_\vg$.
\end{proof}

\begin{lem}\label{lem:commuteequal0}
If $x = \sum_{\vg \in G} (a_\vg + i b_\vg) \vg$ is central and $\vg$ commutes with any odd element, then $b_\vg = 0$. 
\end{lem}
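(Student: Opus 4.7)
The plan is to apply Lemma~\ref{lem:centralcrit} directly, with the key observation that the hypothesis forces a fixed point of the conjugation action by an odd element.

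First, by Lemma~\ref{lem:centralcrit}, the imaginary parts of the coefficients of any central element $x$ transform according to $b_{\vz \vg \vz^{-1}} = \pi(\vz)\, b_\vg$ for every $\vz \in \G$. Next, I would invoke the hypothesis: pick an odd element $\vz \in \G \setminus G$ that commutes with the given $\vg$. Since $\vz \vg \vz^{-1} = \vg$ and $\pi(\vz) = -1$, substituting into the transformation rule gives $b_\vg = -b_\vg$, so $2 b_\vg = 0$ and hence $b_\vg = 0$.

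There is essentially no obstacle here; the lemma is a one-line corollary of the centrality criterion. The only subtlety worth noting is that the phrase ``commutes with any odd element'' must be read existentially (there exists an odd element commuting with $\vg$) rather than universally, since a single such element suffices to force the sign change $b_\vg = -b_\vg$. In the proof I would state this reading explicitly to avoid ambiguity.
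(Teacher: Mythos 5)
Your proof is correct and takes essentially the same route as the paper: both invoke Lemma~\ref{lem:centralcrit} to get $b_{\vz\vg\vz^{-1}} = \pi(\vz)b_\vg$ and then specialize to an odd $\vz$ commuting with $\vg$ to force $b_\vg = -b_\vg$. Your remark about the existential reading of ``any'' is a fair point of clarification but does not change the argument.
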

\begin{proof}
If $\vz$ is odd with $\vz \vg \vz^{-1} = \vg$, then $b_\vg = b_{\vz \vg \vz^{-1}} = \pi(\vz) b_\vg= - b_\vg$, so $b_\vg = 0$.
\end{proof}

Generally, if $C$ is a conjugacy class or Real conjugacy class, the sum of elements of $C$ is not central. For example in $C_n \leq D_{2n}$, all Real conjugacy classes have size one, so $a$ is an element of this form, but $\vb\va\vb^{-1} = \va^{-1} \neq \va$, so $\vb\va \neq \va\vb$. However, we can explicitly give a basis of the centre in terms of the conjugacy classes of $G$ in the proof of the following proposition, also proven in \cite[Cor. 13.6]{LTJ}. We will reprove this less explicitly, using A-characters, in section~\ref{realchars}.

\begin{thm}\label{prop:centredim}
$\dim_{\R}Z(\CGG) = \#($Conjugacy Classes of $G)$.
\end{thm}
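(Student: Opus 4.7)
The plan is to exploit Lemma \ref{lem:centralcrit} directly and count real dimensions. By that lemma a central element has the form $x = \sum_{\vg \in G}(a_\vg + i b_\vg)\vg$ (no odd support), and the conditions on the real parts $a_\vg$ and imaginary parts $b_\vg$ are separate. So $Z(\CGG)$ decomposes as a real vector space $Z_a \oplus Z_b$, where $Z_a$ consists of the purely real-coefficient central elements and $Z_b$ the purely imaginary ones.

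First I would compute $\dim_\R Z_a$. The defining condition $a_{\vz\vg\vz^{-1}} = a_\vg$ for all $\vz \in \G$ simply says $a$ is constant on the orbits of $\G$ acting on $G$ by conjugation, so $\dim_\R Z_a$ equals the number of $\G$-conjugacy classes contained in $G$. Next, for $Z_b$ the constraint for $\vz \in G$ forces $b$ to be a $G$-class function, while for odd $\vz$ it says $b_{\vz\vg\vz^{-1}} = -b_\vg$. Using the A1/A2 dichotomy of Section \ref{Real_CC}: on an A2 class some odd element commutes with $\vg$, so Lemma \ref{lem:commuteequal0} gives $b_\vg = 0$ on the whole class; on an A1 class the values of $b$ may be chosen freely, and they determine by negation the values on the paired A1 class that together with it forms a single $\G$-class. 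Hence each A1 $\G$-class contributes exactly one real dimension, and $\dim_\R Z_b$ equals the number of A1 $\G$-classes in $G$.

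Finally, let $\alpha_1$ and $\alpha_2$ denote the number of A1 and A2 $\G$-conjugacy classes in $G$ respectively. By the A1/A2 description, the number of $\G$-classes in $G$ is $\alpha_1 + \alpha_2$ and the number of $G$-classes in $G$ is $2\alpha_1 + \alpha_2$. Adding the two contributions gives
\[
\dim_\R Z(\CGG) = (\alpha_1 + \alpha_2) + \alpha_1 = 2\alpha_1 + \alpha_2 = \#(\text{Conjugacy Classes of }G),
\]
as required. The main thing to get right is the bookkeeping between A1/A2 $G$-classes and $\G$-classes; this can be made more concrete by exhibiting an explicit basis indexed by $G$-classes, namely $\sum_{\vg \in C}\vg$ for each A2 class $C$, and for each A1-pair $\{C_1,C_2\}$ the two elements $\sum_{\vg \in C_1 \cup C_2}\vg$ and $i\bigl(\sum_{\vg \in C_1}\vg - \sum_{\vg \in C_2}\vg\bigr)$, which one can verify lie in $Z(\CGG)$ using Lemma \ref{lem:centralcrit} and are clearly $\R$-linearly independent. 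I do not expect any real obstacle beyond this combinatorial tallying.
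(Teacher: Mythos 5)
Your proposal is correct and follows essentially the same route as the paper: both rest on Lemma \ref{lem:centralcrit}, Lemma \ref{lem:commuteequal0} and the A1/A2 dichotomy, and the explicit basis you exhibit at the end (class sums over $\G$-classes, plus the purely imaginary difference elements for split classes) is exactly the basis $S(\vg)$, $T(\vg)$ in the paper's proof. Your real/imaginary decomposition $Z_a \oplus Z_b$ just makes the paper's dimension count slightly more explicit; note in passing that the paper's phrase ``type A2'' in the definition of $T(\vg)$ is a typo for the split case A1, which your bookkeeping handles correctly.
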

\begin{proof}
Using \ref{lem:centralcrit} and \ref{lem:commuteequal0} above, a basis is given by
$$S(\vg) = \sum_{\vk \in (\vg)_{\G}} \vk$$
for all conjugacy classes $(\vg)_{\G}$, and
$$T(\vg) = i \left( \sum_{\vk \in (\vg)_{G}} \vk - \sum_{\vk \in (\vh)_{G}} \vk \right)$$
for $(\vg)_{\G}$ where $(\vg)_{\G}$ is of type A2 with $(\vg)_{\G} = (\vg)_{G} \sqcup (\vh)_{G}$. Therefore, the number of basis elements is one for each class which doesn't split, and two for each class that does, which is the number of conjugacy classes of $G$.
\end{proof}

Let us recall the notion of crossed product. Given an algebra $\bA$, the crossed product is the new algebra  
\begin{equation} \label{crossed_product}
\bA \sharp_{\phi,a} C_2 \coloneqq \bA \oplus \bA\varpi , \
\varpi^2=a, \ \varpi x = \phi (x) \varpi \mbox{ for all } x\in \bA
\end{equation}
where $a\in \bA$ and $\phi$ is an automorphism of $\bA$ such that the formula~\eqref{crossed_product} defines an associative algebra. The skew group algebra $\CGG$ is a crossed product in two different ways. First, picking $\vw\in\G\setminus G$ yields
\begin{equation} \label{crossed_product_CG}
\CGG \cong \bC G \sharp_{\xi, \vw^2} C_2 \, , \
\xi (x) = \vw x \vw^{-1} \mbox{ for all } x\in \bC G \, .
\end{equation}
Second, the imaginary unit $i\in\bC$ yields
\begin{equation} \label{crossed_product_RGhat}
\CGG \cong \bR \G \sharp_{\iota, -1} C_2 \, , \
\iota (x) = - i x i \mbox{ for all } x\in \bR \G \, .
\end{equation}
The automorphism $\iota$ is of order two, so that it determines $C_2$-grading on the algebra $\bR \G$:
\begin{equation} \label{crossed_product_grading}
  \bR \G_{+} 
  \coloneqq \{ x \, | \, \iota (x) = x \} = \bR {G}\, , \
  \bR \G_{-} 
  \coloneqq \{ x \, | \, \iota (x) = -x \} = \bR (\widehat{G}\setminus G) \, .
\end{equation}

\subsection{Complexification and Realification}\label{compandreal}
Given a Real structure $G \leq \G$, we have the following square of $\R$-algebra inclusions:
\begin{equation} \label{square_1}
\begin{tikzcd}
    \R G  \arrow[hookrightarrow]{r} \arrow[hookrightarrow]{d} & \C G  \arrow[hookrightarrow]{d} \\
    \R \G \arrow[hookrightarrow]{r}  & \CGG
  \end{tikzcd}
\end{equation} 
Let $R$ be a subring of $S$. We denote restriction and induction functors by $\res{R}{S}$ and $\ind{R}{S}=S\otimes_{R}-$ respectively. Since all extensions in \eqref{square_1} are Frobenius, we observe a strong form of Frobenius reciprocity.
\begin{prop}[Frobenius Reciprocity] \label{FR}
If $R\leq S$ is one of the extensions in \eqref{square_1}, then induction $\ind{R}{S}$ is both left and right adjoint to restriction $\res{R}{S}$.
\end{prop}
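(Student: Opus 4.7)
The plan is to establish that each of the four ring extensions in \eqref{square_1} is a Frobenius extension of rank two. Once this is known, the strong form of Frobenius reciprocity follows from general theory: for a Frobenius extension $R \leq S$, induction $S \otimes_R -$ is naturally isomorphic to coinduction $\Hom_R(S,-)$, and hence serves both as the left adjoint to restriction (via the tensor-hom adjunction) and as the right adjoint to restriction (via the usual restriction-coinduction adjunction).

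I would handle all four extensions uniformly by observing they are instances of the crossed product construction $\bA \hookrightarrow \bA \sharp_{\phi,a} C_2$ from \eqref{crossed_product}. Two identifications are given in \eqref{crossed_product_CG} and \eqref{crossed_product_RGhat}. The remaining two are analogous: $\bR G \hookrightarrow \bR \G$ is the crossed product with $\varpi = \vw$, $\phi(x) = \vw x \vw^{-1}$ and $a = \vw^2$; and $\bR G \hookrightarrow \bC G$ is the crossed product with $\varpi = i$, $\phi = \mathrm{id}_{\bR G}$ and $a = -1$. In each case the parameter $a$ is a unit in $\bA$, either a group element or $-1$.

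The core of the proof is then a general lemma: for any crossed product $S = \bA \oplus \bA\varpi = \bA\sharp_{\phi,a} C_2$ with $a \in \bA^\times$, the projection $F : S \to \bA$, $F(x + y\varpi) = x$, is a Frobenius form. Bimodule linearity is immediate from the crossed product relation $\varpi b = \phi(b)\varpi$, which gives $F((x+y\varpi)b) = F(xb + y\phi(b)\varpi) = xb$. Nondegeneracy holds because in the right $\bA$-basis $\{1,\varpi\}$ the map $s \mapsto F(s\cdot -)$ has matrix $\mathrm{diag}(1,a)$, which is invertible. This produces a bimodule isomorphism $S \cong \Hom_\bA(S,\bA)$, from which the natural isomorphism of functors $S \otimes_\bA N \cong \Hom_\bA(S,N)$ follows by the standard finitely generated projective argument.

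The only point requiring care is the verification of the crossed product data in each of the four cases, notably for \eqref{crossed_product_RGhat}, where the automorphism $\iota$ acts nontrivially by negating the odd part. The other three cases are straightforward. The argument thus reduces the four instances of Frobenius reciprocity in \eqref{square_1} to a single lemma about rank-two crossed products.
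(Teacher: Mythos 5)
Your proposal is correct and follows the same route as the paper, whose entire proof is the one-line observation that all four extensions in \eqref{square_1} are Frobenius, from which two-sided adjointness is standard. Your uniform crossed-product lemma (projection onto $\bA$ as Frobenius form, nondegenerate because $a$ is a unit) simply supplies the verification of the Frobenius property that the paper leaves implicit, and your identification of the crossed-product data in each of the four cases is accurate.
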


Let us examine the extensions in \eqref{square_1}, starting from $\bR G  \leq \bC G$.
For a real representation $U$ of $G$, its complexification is
$U_{\C} = \C \otimes_{\R} U \cong U\ind{\bR G}{\bC G}$.
For a complex representation $V$ of $G$, the realification is ${}_{\R}V = V\res{\bR G}{\bC G}$.

The second pair $\C G  \leq \CGG$ contains generalisations (cf. Section~\ref{standard_real}) for A-representations.
\begin{defn}
For a $\C G $-module $V$, the \emph{Realification} of $V$ is the $\CGG$-module $V\ind{\C G }{\CGG} = \CGG \otimes_{\C G } V$. For a $\CGG$-module $W$, the \emph{Complexification} of $W$ is the $\C G $-module $W\res{\C G }{\CGG}$.
\end{defn}

Let us contemplate the composition of Realification and Complexification. Recall that $\vw \cdot V$ is the $\C G $-module $V$ with a new action of $G$ by
$\vg \bullet v \coloneqq \vw \vg \vw^{-1}v$.

\begin{prop}\label{pr:cresind}
For any $\vw \in \G \setminus G$ and a $\C G $-module $V$, we have a natural isomorphism of $\bC G$-modules
$$
 V \ind{\C G }{\CGG} \res{\C G }{\CGG} \cong V \oplus \vw \cdot \overline{V} \, .
$$
\end{prop}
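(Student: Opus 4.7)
The plan is to decompose the induced module using the natural $\C G$-$\C G$-bimodule splitting of $\CGG$ into even and odd cosets. Set $N := \vw^{-1}\,\C G$ for the odd part (as a vector subspace, $N = \C(\G\setminus G)$). Then $\CGG = \C G \oplus N$ as $\C G$-$\C G$-bimodules, since both cosets are closed under left and right multiplication by $G$. The key observation is that $N$ is a free right $\C G$-module of rank one: the map $\vh \mapsto \vw^{-1}\vh$ defines a right $\C G$-linear isomorphism $\C G \xrightarrow{\sim} N$ by associativity in $\CGG$.

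Applying $-\otimes_{\C G} V$ then yields
\[
V\ind{\C G}{\CGG}\res{\C G}{\CGG} \;\cong\; V \oplus (N \otimes_{\C G} V)
\]
as left $\C G$-modules. The first summand is $V$ tautologically, so what remains is to construct a natural $\C G$-isomorphism $\psi : N \otimes_{\C G} V \to \vw \cdot \overline{V}$ and prove its $\C G$-linearity. I would take $\psi$ to be the map sending $\vw^{-1}\otimes v \mapsto v$ (viewing the codomain as having underlying set $V$), which is well-defined via the right $\C G$-module identification of $N$ with $\C G$ above.

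The main verification is the compatibility with complex scalars, which uses the skew-commutation $c\vw^{-1} = \vw^{-1}\bar c$ in $\CGG$---this is immediate from the multiplication rule $a\vg \cdot b\vh = a\pi(\vg)(b)\vg\vh$ together with $\pi(\vw^{-1}) = -1$. It forces $c\cdot(\vw^{-1}\otimes v) = \vw^{-1}\otimes \bar c v$, so $\psi$ intertwines the scalar action of $c$ with multiplication by $\bar c$, matching the scalar action on $\overline{V}$. For $G$-equivariance, the identity $\vg\vw^{-1} = \vw^{-1}(\vw\vg\vw^{-1})$ (with $\vw\vg\vw^{-1} \in G$) gives $\vg\cdot(\vw^{-1}\otimes v) = \vw^{-1}\otimes(\vw\vg\vw^{-1})v$, so $\vg$ acts through $\psi$ as $v \mapsto (\vw\vg\vw^{-1})\cdot_V v$, precisely the twisted $G$-action defining $\vw\cdot\overline{V}$. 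Naturality in $V$ follows immediately from functoriality. There is no serious obstacle; the only delicate choice is taking $\vw^{-1}$ (rather than $\vw$) as the generator of $N$ on the right, so that the emergent twist is conjugation by $\vw$, aligning with the statement of the proposition.
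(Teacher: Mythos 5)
Your proposal is correct and takes essentially the same route as the paper: the paper likewise splits $\CGG \cong \C G \oplus \C[\G\setminus G]$ as a $\C G$-bimodule and identifies $\C[\G\setminus G]\otimes_{\C G}V$ with $\vw\cdot\overline{V}$ via the map $\lambda\vy\otimes v\mapsto\overline{\lambda}\vw\vy v$, whose inverse $v\mapsto\vw^{-1}\otimes v$ is exactly your $\psi^{-1}$. Your verification that $\vh\mapsto\vw^{-1}\vh$ is right $\C G$-linear, together with the skew-commutation and conjugation identities, is precisely what makes the paper's $f$ well defined, so the two arguments coincide.
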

\begin{proof}
As a $\C G $-bimodule, $\CGG \cong \C G  \oplus \C[\G \setminus G]$. Therefore,
\begin{align*}
 V \ind{\C G }{\CGG} \res{\C G }{\CGG} &\cong (\C G  \otimes_{\C G } V) \oplus (\C[\G \setminus G] \otimes_{\C G } V) \\ &\cong V \oplus (\C[\G \setminus G] \otimes_{\C G } V).
\end{align*}
The inverse of the $\bC G$-module homomorphism
$$
\fn{f}{\C[\G \setminus G] \otimes_{\C G } V}{\vw \cdot \overline{V}}, \  f(\lambda \vy \otimes v) =\overline{\lambda}\vw \vy v
$$
is given by $f^{-1}(v) =\vw^{-1} \otimes v$. It follows that 
$$
\fn{h}{V \oplus \vw \cdot \overline{V}}{\CGG \otimes_{\C G } V}, \ h(v , w) = 1 \otimes v + \vw^{-1} \otimes w
$$
is the natural isomorphism we seek. 
\end{proof}


\begin{prop}\label{pr:cindres}
For any $\CGG$-module $W$, we have a natural isomorphism of $\CGG$-modules
\[
W \res{\C G }{\CGG} \ind{\C G }{\CGG}  \cong W \oplus W \, .
\]
\end{prop}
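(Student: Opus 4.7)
The plan is to reduce the isomorphism to a statement at the $\C G$-level and then appeal to Corollary~\ref{Achar_determines}, which guarantees that A-characters determine A-representations. Writing $V = W\res{\C G}{\CGG}$, Proposition~\ref{pr:cresind} gives
\[
(W \res{\C G}{\CGG} \ind{\C G}{\CGG}) \res{\C G}{\CGG} \;\cong\; V \oplus \vw \cdot \overline{V}
\]
as $\C G$-modules. Since $(W \oplus W)\res{\C G}{\CGG} = V \oplus V$, it suffices to exhibit a $\C G$-isomorphism $\vw \cdot \overline{V} \cong V$.

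This is the place where the full $\CGG$-structure on $W$ (rather than merely its $\C G$-restriction) has to be used. The antilinear operator $\rho(\vw^{-1}) \colon W \to W$ coming from the $\CGG$-action is a $\C$-linear bijection $\overline{V} \to V$. The identity
\[
\rho(g)\rho(\vw^{-1}) = \rho(g \vw^{-1}) = \rho(\vw^{-1}) \rho(\vw g \vw^{-1}), \qquad g \in G,
\]
shows that it intertwines the twisted $G$-action on $\vw \cdot \overline{V}$ (given by $g \bullet v = \rho(\vw g \vw^{-1})(v)$) with the standard $G$-action on $V$, yielding the required $\C G$-isomorphism.

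It follows that $W\res{\C G}{\CGG}\ind{\C G}{\CGG}$ and $W \oplus W$ have isomorphic restrictions to $\C G$, hence equal A-characters, so Corollary~\ref{Achar_determines} promotes this to a $\CGG$-isomorphism. Naturality in $W$ is inherited from that of Proposition~\ref{pr:cresind} and of the operator $\rho(\vw^{-1})$.

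The main conceptual obstacle is recognising that the antilinear datum $\rho(\vw^{-1})$ is exactly what is needed to identify the Mackey summand $\vw \cdot \overline{V}$ with $V$. An alternative, more self-contained, argument constructs the isomorphism directly by defining $\Phi : W \oplus W \to \CGG \otimes_{\C G} W$ by
\[
\Phi(w_1, w_2) \;=\; \tfrac{1}{2}\bigl(1 \otimes w_1 + \vw^{-1} \otimes \vw w_1\bigr) + \tfrac{i}{2}\bigl(1 \otimes w_2 - \vw^{-1} \otimes \vw w_2\bigr),
\]
where the first summand is the canonical section coming from the separable element $\tfrac12(1\otimes 1 + \vw^{-1}\otimes \vw) \in \CGG \otimes_{\C G} \CGG$ and the factor of $i$ in the second is forced by the relation $\vw i = -i \vw$; routine verification of $\CGG$-linearity and injectivity, combined with a dimension count, yields the result.
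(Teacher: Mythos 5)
Your proposal has two parts, and they fare quite differently.

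\textbf{The reduction via Corollary~\ref{Achar_determines} is circular.} In the paper, Corollary~\ref{Achar_determines} is stated immediately after Proposition~\ref{pr:cindres} and follows from it: if $W_1\res{\C G}{\CGG}\cong W_2\res{\C G}{\CGG}$, then applying $\ind{\C G}{\CGG}$ and \ref{pr:cindres} gives $W_1\oplus W_1 \cong W_2\oplus W_2$, whence $W_1\cong W_2$ by Krull--Schmidt. Invoking \ref{Achar_determines} to establish \ref{pr:cindres} therefore assumes what you are trying to prove. (One could in principle prove \ref{Achar_determines} independently, via semisimplicity and the fact that the $\bR$-character of a $\CGG$-module is determined by the A-character, but you do not do so, and the paper does not set it up that way.) Moreover, even granting \ref{Achar_determines}, the argument only delivers \emph{some} isomorphism $W\res{}{}\ind{}{}\cong W\oplus W$; matching A-characters cannot by itself produce a \emph{natural} isomorphism, so the final sentence about naturality ``inherited from \ref{pr:cresind} and $\rho(\vw^{-1})$'' does not follow from this route. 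The observation inside this argument — that $\rho(\vw^{-1})$ gives a $\C G$-isomorphism $\vw\cdot\overline{V}\cong V$ whenever $V$ is Realisable — is a genuinely useful fact (it reappears implicitly in Theorem~\ref{table}), but it does not rescue the logic here.

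\textbf{The ``alternative, more self-contained'' construction is essentially the paper's proof.} The paper writes down $f\colon\CGG\otimes_{\C G}W\to W\oplus W$ and its inverse $f^{-1}(v,w)=1\otimes(v+iw)+\vw^{-1}\otimes\vw(v-iw)$; your $\Phi$ is exactly $\tfrac12 f^{-1}$. The outline is sound (after the necessary care with signs: $\lambda\vw^{-1}=\vw^{-1}\overline\lambda$, so $\Phi(w_1,w_2)=1\otimes(\tfrac12 w_1+\tfrac{i}{2}w_2)+\vw^{-1}\otimes(\tfrac12\vw w_1+\tfrac{i}{2}\vw w_2)$, which is injective). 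To make this a complete proof you should actually carry out the ``routine verification'' of $\CGG$-linearity and injectivity — this is where the sign tracking matters and the antilinearity of $\vw$ gets used — and discard the first argument, or reorder the logic so that \ref{Achar_determines} is proved before it is used.
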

\begin{proof}
  Fix an odd $\vw$, so that $G = G \sqcup \vw^{-1} G$. Define a homomorphism
  $\fn{f}{\CGG \otimes_{\C G } W}{W \oplus W}$ by
  $$
  f ( (a\vg + b \vw^{-1} \vh) \otimes w) = \left( \frac{1}{2} (a\vg + b \vw^{-1}\vh)w , -\frac{i}{2} (a\vg - b \vw^{-1}\vh)w \right) \, .
  $$
Its inverse is $f^{-1}(v,w) =1 \otimes (v + i w) + \vw^{-1} \otimes \vw(v - i w)$.
\end{proof}

\begin{cor} \label{Achar_determines}
  If $W_1 \res{\C G }{\CGG} \cong W_2 \res{\C G }{\CGG}$, then $W_1\cong W_2$.
  Thus an A-representation is determined by its A-character.
\end{cor}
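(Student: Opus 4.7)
The plan is to exploit Proposition~\ref{pr:cindres} to upgrade an isomorphism of Complexifications to an isomorphism of the original A-representations, and then to read off the character statement as an immediate consequence.

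First I would apply the induction functor $\ind{\C G}{\CGG}$ to the given $\C G$-isomorphism $W_1 \res{\C G}{\CGG} \cong W_2 \res{\C G}{\CGG}$. Since induction is a functor, the resulting $\CGG$-modules are isomorphic:
\[
W_1 \res{\C G}{\CGG} \ind{\C G}{\CGG} \;\cong\; W_2 \res{\C G}{\CGG} \ind{\C G}{\CGG}.
\]
By Proposition~\ref{pr:cindres}, the left-hand side is $W_1 \oplus W_1$ and the right-hand side is $W_2 \oplus W_2$, so we obtain $W_1 \oplus W_1 \cong W_2 \oplus W_2$ as $\CGG$-modules.

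Next I would cancel the direct summand. The algebra $\CGG$ is semisimple (as noted in the paragraph after Proposition~\ref{modulerepequiv}, since the characteristic of $\bR$ does not divide $|\G|$), so the Krull--Schmidt theorem applies: decomposing $W_1$ and $W_2$ into finitely many indecomposables and comparing multiplicities on both sides of $W_1 \oplus W_1 \cong W_2 \oplus W_2$ yields $W_1 \cong W_2$. This proves the first assertion.

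For the second assertion, recall that the A-character of $W$ is by definition the ordinary character of the complex representation $W\res{\C G}{\CGG}$. Since ordinary characters determine complex $G$-representations up to isomorphism, equality of A-characters of $W_1$ and $W_2$ forces $W_1 \res{\C G}{\CGG} \cong W_2 \res{\C G}{\CGG}$, and the first part of the corollary then gives $W_1 \cong W_2$. The only mildly delicate point is the cancellation step; in the semisimple setting this is essentially immediate, but without semisimplicity one would have to invoke Krull--Schmidt more carefully.
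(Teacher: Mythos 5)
Your proof is correct and follows exactly the route the paper intends: the corollary is stated immediately after Proposition~\ref{pr:cindres} precisely so that one applies induction to the given isomorphism, obtains $W_1 \oplus W_1 \cong W_2 \oplus W_2$, and cancels using semisimplicity of $\CGG$. The character argument for the second assertion is likewise the intended one, since the A-character is by definition the character of the Complexification.
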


The above result was known to E.Wigner \cite[p. 344]{WIG}.


\subsection*{Matrix Form of Realification} Suppose that $V$ is a $\C G $-module, with a basis $\{e_1, ... ,e_n\}$. For $\vg \in G$, let $[\vg]$ be the corresponding matrix. By \ref{pr:cresind}, the vector space $V \ind{\C G}{\CGG}$ admits a complex basis $\{1 \otimes e_i , \vw^{-1} \otimes e_i \mid 1 \leq i \leq n\}$ for some fixed odd $\vw$. The matrices of $\vg \in G$ and $\vz \in \G\setminus G$ respectively are
\begin{equation}
\begin{pmatrix}
[\vg] & \mathbf{0} \\ \mathbf{0} & \overline{[\vw \vg\vw^{-1}]}
\end{pmatrix}_1, \text{ and }
\begin{pmatrix}
 \mathbf{0} & [\vz \vw^{-1}] \\ \overline{[\vw \vz]} & \mathbf{0} 
\end{pmatrix}_{-1}.
\end{equation}

Let us examine the third pair $\R \G \leq \CGG$, namely, the composition of the restriction and induction functors here.

\begin{prop}
For any $\CGG$-module $W$, we have a natural isomorphism of $\CGG$-modules
\[
W \res{\R \G}{\CGG} \ind{\R \G}{\CGG} \cong W \oplus \overline{W} \, .
\]
\end{prop}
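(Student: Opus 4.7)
The plan is to construct the isomorphism explicitly, using the crossed product description \eqref{crossed_product_RGhat}. This realises $\CGG = \bR\G \cdot 1 \oplus \bR\G \cdot i$ as a free left $\bR\G$-module of rank two. Tensoring over $\bR\G$ yields a real vector space identification $\CGG \otimes_{\bR\G} W \cong W \oplus W$ via the correspondence $1 \otimes w_1 + i \otimes w_2 \leftrightarrow (w_1, w_2)$. Under this identification, I compute the induced left $\CGG$-action using the algebra relations $ig = gi$ for $g \in G$ and $iz = -zi$ for $z \in \G \setminus G$: the element $g$ acts diagonally as $(w_1, w_2) \mapsto (g w_1, g w_2)$; the element $z$ acts with a sign twist as $(w_1, w_2) \mapsto (z w_1, -z w_2)$; and $i$ acts off-diagonally as $(w_1, w_2) \mapsto (-w_2, w_1)$, reflecting $i^2 = -1$.

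Next I will exhibit the isomorphism with $W \oplus \overline{W}$ by defining two $\CGG$-linear maps
\[
\alpha \colon W \to \CGG \otimes_{\bR\G} W, \qquad \alpha(w) = 1 \otimes w - i \otimes iw,
\]
\[
\beta \colon \overline{W} \to \CGG \otimes_{\bR\G} W, \qquad \beta(v) = 1 \otimes v + i \otimes iv,
\]
where in each formula the inner $i$ denotes scalar multiplication within $W$. Their sum $\phi = \alpha \oplus \beta \colon W \oplus \overline{W} \to \CGG \otimes_{\bR\G} W$ sends $(u, v)$ to $1 \otimes (u+v) + i \otimes i(v - u)$. Geometrically, the images of $\alpha$ and $\beta$ are the $-1$- and $+1$-eigenspaces of the commuting involution $J J_0$ on $\CGG \otimes_{\bR\G} W$, where $J$ is multiplication by $i \in \CGG$ and $J_0$ is the ``diagonal'' complex structure inherited from the $W$-factor.

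The main work is checking $\CGG$-equivariance of $\alpha$ and $\beta$. Compatibility with $g \in G$ and with the scalar $i$ is straightforward. The delicate case is $z \in \G \setminus G$: one must combine the algebra relation $zi = -iz$ in $\CGG$ with the antilinearity relation $z(iw) = -izw$ in $W$, and the two sign flips conspire to cancel, giving equivariance. Once $\phi$ is equivariant, injectivity is immediate since $\phi(u, v) = 0$ forces $u + v = 0$ and $v - u = 0$, hence $u = v = 0$. Combined with the dimension count $\dim_{\bR} = 4 \dim_{\bC} W$ on both sides, this upgrades injectivity to the claimed isomorphism, and naturality in $W$ is transparent from the formulas.
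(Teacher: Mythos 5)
Your proof is correct and takes essentially the same route as the paper: both use the decomposition $\CGG=\R\G\oplus\R\G\, i$ and write down the explicit mutually inverse maps, with your $\phi(u,v)=1\otimes(u+v)+i\otimes i(v-u)$ being exactly twice the paper's $f^{-1}$ and your equivariance check for odd elements (the two sign flips cancelling) being the verification the paper leaves implicit.
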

\begin{proof}
  In one direction the isomorphism is
  $$
  \fn{f}{\CGG \otimes_{\R \G}W \res{\R \G}{\CGG}}{W \oplus \overline{W}}, \ 
  f((x+ i y) \otimes w) = ((x+ i y)w,(x- i y)w), \ x,y \in \R \G\, .
  $$
Its inverse is $f^{-1}(v,w) = \frac{1}{2} \left( 1 \otimes (w + v) + i \otimes (w - v) \right)$.
\end{proof}


Given a representation $X$ of $\G$, by $X \otimes \pi$ we denote the tensor product with the sign representation. Without loss of generality, $X \otimes \pi =X$ as vector spaces, with a new action $\vg \cdot v = \pi(\vg) \vg v$.
\begin{prop}
For any $\R \G$-module $X$, we have a natural isomorphism of $\bR \G$-modules
\[
X \ind{\R \G}{\CGG} \res{\R \G}{\CGG} \cong X \oplus (X \otimes \pi) \, . 
\]
\end{prop}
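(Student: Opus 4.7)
The plan is to imitate the pattern of the preceding propositions: exhibit an explicit $\R\G$-module isomorphism $f$ together with its inverse, and verify compatibility with the left $\R\G$-action directly. The key structural fact I would use is the crossed product description \eqref{crossed_product_RGhat}: as a right $\R\G$-module we have the decomposition $\CGG = \R\G \oplus i\,\R\G$, but the left $\R\G$-module structure on the second summand is twisted, because the commutation relation $\vg\, i = \pi(\vg)\, i\,\vg$ (obtained from $\iota(x)=-ixi$) shows that for $\vg\in\G$ and $y\in\R\G$, we have $\vg\cdot(iy) = \pi(\vg)\, i\,\vg y$. Thus $i\,\R\G$, regarded as a left $\R\G$-module, is precisely $\R\G\otimes\pi$.

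First I would define the map
\[
\fn{f}{\CGG\otimes_{\R\G} X\res{\R\G}{\CGG}}{X\oplus (X\otimes\pi)}, \qquad f((x+iy)\otimes v) = (xv,\ yv),
\]
for $x,y\in\R\G$ and $v\in X$, and check it is well defined over $\otimes_{\R\G}$ by writing $((x+iy)a)\otimes v = (xa+iya)\otimes v$ and comparing with $(x+iy)\otimes (av)$ for $a\in\R\G$. Its candidate inverse is
\[
f^{-1}(v,w) = 1\otimes v + i\otimes w,
\]
and a direct two-line computation shows $f\circ f^{-1}=\Id$ and $f^{-1}\circ f=\Id$, using $1\otimes xv = x\otimes v$ and $i\otimes yv = iy\otimes v$.

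Next I would verify $\R\G$-equivariance. For $\vg\in\G$ the commutation rule gives
\[
\vg\cdot\bigl((x+iy)\otimes v\bigr) = \bigl(\vg x + \pi(\vg)\, i\,\vg y\bigr)\otimes v,
\]
so $f$ sends this to $(\vg xv,\ \pi(\vg)\,\vg y v)$, which is exactly the image of $(xv,yv)$ under the diagonal action on $X\oplus (X\otimes\pi)$ — the sign $\pi(\vg)$ on the second coordinate is absorbed by the twist defining $X\otimes\pi$. Naturality in $X$ is automatic from the formula.

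The only place where care is needed is the sign bookkeeping from $\vg\, i = \pi(\vg)\, i\,\vg$; this is the whole reason the sign representation $\pi$ appears in the second summand, and it is also the cleanest explanation of why the analogous real-side formula is $X\oplus(X\otimes\pi)$ rather than $X\oplus X$ as in Proposition~\ref{pr:cindres}. No deeper obstacle arises; the argument is essentially the mirror, under the crossed product \eqref{crossed_product_RGhat}, of the proof of Proposition~\ref{pr:cindres} under \eqref{crossed_product_CG}.
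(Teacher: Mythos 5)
Your proof is correct and is essentially the paper's own argument: the paper uses exactly the same map $f((x+iy)\otimes v)=(xv,yv)$ with inverse $f^{-1}(v,w)=1\otimes v+i\otimes w$, leaving the verification implicit. Your explicit sign bookkeeping via the commutation rule $\vg\, i = \pi(\vg)\, i\,\vg$ (i.e.\ the crossed product \eqref{crossed_product_RGhat}) is precisely the check the paper omits, and it correctly explains the appearance of the twist $\pi$.
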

\begin{proof}
  In one direction the isomorphism is
  $$
  \fn{f}{\CGG \otimes_{\R \G}X}{X \oplus (X \otimes \pi)}, \
  f((x+i y) \otimes v) = (xv,yv), \  x,y \in \R \G \, .
  $$
  Its inverse is  $f^{-1}(v,w) = 1 \otimes v + i \otimes w$.
\end{proof}

\subsection*{Matrix Form of Induction} Suppose that $X$ is an $\R \G$-module, with basis $\{e_1, ... , e_n\}$. For $\vz \in \G$, let $[\vz]$ be the corresponding real valued matrix. A $\C$-basis of $W \coloneqq X \ind{\R \G}{\CGG}$ is $\{1 \otimes e_1, ... ,1 \otimes e_n\}$. In this basis, the matrix of $\vz$ on $W$ is $[\vz]_{\pi (\vz)}$.

\subsection*{Matrix Form of Restriction}
Let $W$ be a $\CGG$-module with $\C$-basis $\{e_1, ... , e_n\}$.
Let us choose the basis
$\{1 \otimes e_1, i \otimes e_1, ... , 1 \otimes e_n, i \otimes e_n\}$
of the restriction $X \coloneqq W \res{\R \G}{\CGG}$.
If an even $\vg$ has matrix $[\vg]_1$ on $W$, where $[ \vg ] = (a_{jk})$ and $a_{jk} = x_{jk} + i y_{jk}$, $x_{jk}, y_{jk}\in\bR$, then on $X$, $\vg$ has the matrix obtained from $[\vg]$ by replacing each $a_{jk}$ with the matrix
$\begin{psmallmatrix}
x_{jk} & -y_{jk} \\ y_{jk} & x_{jk}
\end{psmallmatrix}$.
For an odd element $\vz$ with matrix $[\vz]_{-1}$, where $[\vz] = (a_{jk})$ and $a_{jk} = x_{jk} + i y_{jk}$, 
the matrix of $\vz$ on $X$ is obtained by replacing each $a_{jk}$ with the matrix 
$\begin{psmallmatrix}
x_{jk} & y_{jk} \\ y_{jk} & -x_{jk}
\end{psmallmatrix}$.
Therefore, the matrix of $\vz$ on $X$ has zero trace. This observation is essential for relating the character to A-representations, see section~\ref{realchars}.

\subsection{Bilinear Forms}
We work with a $\C G $-module $V$ and a chosen element $\vw\in\G\setminus G$ in this section.
We call $V$ \emph{Realisable} if it is the restriction of some A-representation $W$. Let us characterise Realisable modules in terms of existence of certain bilinear forms.
We call a bilinear form $B$ on $V$ \emph{$\vw$-invariant} if 
\begin{equation} \label{invariant_form}
B(\vg u, \vw \vg \vw^{-1} v) = B(u,v) \mbox{ for all } \vg \in G, \; u,v\in V\, .
\end{equation}

\begin{thm}\label{biltypes}
A simple $\C G $-module $V$ falls into one of the three cases:
\begin{enumerate}[label=(\roman*)]
\item
$V \not\cong \vw \cdot V^{*}$ and there is no non-zero $\vw$-invariant bilinear form on $V$. 
\item
$V \cong \vw \cdot V^{*}$, $V$ is Realisable, and there exists a non-degenerate $\vw$-invariant bilinear form $B$ on $V$, which is \emph{$\vw$-symmetric}: $B(u,\vw^2 v) = B(v,u)$.
\item
$V \cong \vw \cdot V^{*}$, $V$ is not Realisable, and there exists a non-degenerate $\vw$-invariant bilinear form $B$ on $V$, which is \emph{$\vw$-alternating}: $B(u,\vw^2 v) = -B(v,u)$.
\end{enumerate}
Furthermore, the case that occurs is independent of the choice of $\vw \in \G \setminus G$.
\end{thm}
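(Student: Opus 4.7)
The plan is to combine Schur's lemma, an involution argument, and a careful computation with an invariant Hermitian form. In the first step I identify bilinear forms on $V$ with linear maps $\phi_B\colon V \to V^*$ via $\phi_B(u)(v) = B(u,v)$; the $\vw$-invariance of $B$ is exactly the condition that $\phi_B$ be a $\C G$-homomorphism $V \to \vw \cdot V^*$. Since $V$ and $\vw \cdot V^*$ are simple, Schur's lemma makes this space either zero (giving case (i)) or one-dimensional over $\C$, and in the latter case every non-zero homomorphism is an isomorphism, so $B$ is automatically non-degenerate. Next I check that $B'(u,v) := B(v, \vw^2 u)$ is again $\vw$-invariant (via the substitution $\vh = \vw \vg \vw^{-1}$ in the invariance relation) and satisfies $(B')' = B$ (apply invariance with $\vg = \vw^2$); hence in the one-dimensional case $B' = \lambda B$ with $\lambda \in \{\pm 1\}$, producing the $\vw$-symmetric/$\vw$-alternating dichotomy.

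For the second step I fix a $G$-invariant positive-definite Hermitian form $h$ on $V$ (conjugate-linear in the first slot, linear in the second; such $h$ exists and is unique up to a positive scalar by Schur). Given a non-zero $\vw$-invariant bilinear form $B$, I define an antilinear operator $T\colon V \to V$ by the equation $h(Tu, v) = B(u, v)$. A short check shows that $\vw$-invariance of $B$ translates into the twisted intertwiner relation $T \rho(\vg) = \rho(\vw \vg \vw^{-1}) T$ for all $\vg \in G$. Iterating and applying Schur to the simple $\C G$-module $V$ gives $T^2 = c \rho(\vw^2)$ for some $c \in \C$, and comparing $T^3 = T \cdot T^2$ with $T^3 = T^2 \cdot T$ while using antilinearity of $T$ forces $c \in \R$. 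Rescaling $T \mapsto \mu T$ multiplies $c$ by $|\mu|^2 > 0$ (because $(\mu T)^2 = \mu \bar\mu\, T^2$), so the sign of $c$ is intrinsic; moreover $V$ is Realisable if and only if $c > 0$, since the extension to $\CGG$ requires $T^2 = \rho(\vw^2)$, which is achievable by rescaling precisely when $c > 0$.

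Matching $\mathrm{sign}(c)$ with $\lambda$ is the main obstacle of the plan. Consider the auxiliary form $H(u,v) := h(Tu, Tv)$: the twisted intertwiner property of $T$ combined with $G$-invariance of $h$ yields $G$-invariance of $H$, and antilinearity of $T$ converts Hermitian symmetry of $h$ into Hermitian symmetry of $H$. By uniqueness of $G$-invariant Hermitian forms on the simple module $V$, the diagonal values satisfy $H(u,u) = r\, h(u,u)$ for some $r \in \R$; positive-definiteness of $h$ and invertibility of $T$ then force $r > 0$. From $T^2 = c \rho(\vw^2)$ I get $\rho(\vw^{-2}) T = c T^{-1}$, and combined with $G$-invariance of $h$ this gives $B(v, \vw^2 u) = h(Tv, \rho(\vw^2) u) = c\, h(T^{-1} v, u)$. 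Equating to $\lambda B(u,v) = \lambda\, h(Tu, v)$ and substituting $v = Tu$ yields $c\, h(u,u) = \lambda\, h(Tu, Tu) = \lambda r\, h(u,u)$, so $c = \lambda r$ and $\mathrm{sign}(c) = \lambda$. Hence $\vw$-symmetric matches Realisable and $\vw$-alternating matches not Realisable. Independence from the choice of $\vw$ is then immediate: for $\vh \in G$, $(\vw \vh) \cdot V^* \cong \vw \cdot V^*$ as $\C G$-modules via inner conjugation by $\vh$, and Realisability is an intrinsic property of $V$ as a $\C G$-module.
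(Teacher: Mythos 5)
Your proof is correct, and its core mechanism coincides with the paper's: both arguments first use Schur's lemma to get the dichotomy $B(u,\vw^2v)=\pm B(v,u)$, and both then convert $B$ into an antilinear operator on $V$ via a $G$-invariant Hermitian form (your $T$ with $h(Tu,v)=B(u,v)$ is the inverse-adjoint of the paper's $J$ defined by $B(\cdot,J(u))=\langle\cdot,u\rangle$), whose square is a real multiple of $\rho(\vw^2)$. The organization differs in a way worth noting. The paper proves the two implications of ``Realisable $\Leftrightarrow$ $\vw$-symmetric'' separately: the forward direction by an explicit construction $B(u,v)=\langle u,\vw^{-1}v\rangle+\langle v,\vw u\rangle$, and the converse by using the $\vw$-symmetry hypothesis inside the computation $\langle J(v),J(v)\rangle=rB(J(v),\vw^2v)=rB(v,J(v))$ to force $r>0$. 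You instead compute the sign of $c$ (where $T^2=c\,\rho(\vw^2)$) unconditionally via the auxiliary positive form $H(u,v)=h(Tu,Tv)$, obtaining $c=\lambda r$ with $r>0$, so one calculation simultaneously yields both directions and identifies the symmetry type with Realisability; this is arguably cleaner and makes the sign of $c$ visibly intrinsic. The one place you are terse is the ``only if'' half of ``Realisable $\Leftrightarrow c>0$'': to see that Realisability forces $c>0$ you should say that any antilinear operator $\rho(\vw)$ extending the action satisfies the same twisted intertwining relation as $T$, hence equals $\bar\mu T$ for some scalar $\mu$ by Schur, so $\rho(\vw)^2=|\mu|^2\,c\,\rho(\vw^2)$ can equal $\rho(\vw^2)$ only when $c>0$. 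With that sentence added, the argument is complete and matches the theorem, including the independence of the choice of $\vw$.
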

\begin{proof} 
We start with the following lemma, the proof of which is standard. 
\begin{lem}\label{3iso}
We have isomorphisms of $\bC G$-modules:   
\begin{align*}
&\Bil(V \times \vw \cdot V, \C) &  &\cong & &\Hom_{\C}(V,\vw \cdot V^*) & &\cong & &(V \otimes \vw \cdot V)^* ,\\
&B(u,v) & &\leftrightarrow & &F(u)(v) & &\leftrightarrow & &f(u \otimes v).
\end{align*}
\end{lem}

Note that $\Bil(V \times \vw \cdot V, \C)$ is the vector space $\Bil(V \times V, \C)$ of all bilinear forms on $V$ with a new (twisted in the second position) $\C G$-module structure. The lemma yields linear bijections between the corresponding subspaces of $G$-invariants:
\begin{equation} \label{3iso_inv}
\Bil(V \times \vw \cdot V, \C)^G  \cong \Hom_{\C G }(V,\vw \cdot V^*) \cong ((V \otimes \vw \cdot V)^*)^G \, . 
\end{equation}

Now suppose that $V \cong \vw \cdot V^{*}$.
By \eqref{3iso_inv} there exists a non-degenerate $\vw$-invariant bilinear form $B$.
On the other hand, $\hat{B}(u,v) \coloneqq B(v, \vw^{2}u)$ is another non-zero $\vw$-invariant bilinear form.
By Schur's Lemma, $B(u,v) = c B(v,\vw^{2}u)$ for some $c \in \C$. Observe that
$$
B(u,v) = c B(v,\vw^{2}u) = c( c B(\vw^2 u, \vw^2 v) ) = c^2 B(u,v)\, ,
$$
so that $c = \pm 1$ and $B(u,\vw^2v) = \pm B(v,u)$, thus, $B$ is either $\vw$-symmetric, or  $\vw$-alternating. 
By Schur's lemma and \eqref{3iso_inv}, $V$ cannot admit the $\vw$-symmetric and  $\vw$-alternating forms simultaneously. Thus, the theorem is reduced to Proposition~\ref{realisablebilinear}.
\end{proof}
\begin{prop}\label{realisablebilinear}
Let $(V,\rho)$ be a simple $\C G $-module, with $V \cong \vw \cdot V^{*}$. Then $V$ is Realisable if and only if there exists a non-degenerate $\vw$-invariant  bilinear form $B$ on $V$ with $B(u,\vw^2 v) = B(v,u)$.
\end{prop}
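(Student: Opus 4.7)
My plan is to mediate between bilinear forms and antilinear maps on $V$ via a $G$-invariant positive-definite Hermitian inner product $h$, taken conjugate-linear in the first argument and linear in the second. For any antilinear map $J\colon V\to V$, the assignment $B(u,v):=h(Ju,v)$ yields a bilinear form on $V$; since $h$ is non-degenerate, this is a bijection between antilinear maps $V\to V$ and bilinear forms on $V$, under which invertibility of $J$ corresponds to non-degeneracy of $B$. A short calculation using $G$-invariance of $h$ and its non-degeneracy shows that $\vw$-invariance of $B$ is equivalent to the intertwining identity $J\rho(\vg)=\rho(\vw\vg\vw^{-1})J$ for all $\vg\in G$.

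For the forward direction, suppose $V$ is Realisable and set $J:=\rho(\vw)$. Then $J$ is antilinear and satisfies both the intertwining identity and $J^2=\rho(\vw^2)$. Replace $h$ with its $J$-average $h(u,v)+\overline{h(Ju,Jv)}$, which remains $G$-invariant and positive-definite Hermitian; since $J^2=\rho(\vw^2)\in\rho(G)$ is $h$-unitary, the averaged form additionally satisfies $\overline{h(Ju,Jv)}=h(u,v)$. Defining $B(u,v):=h(Ju,v)$, we compute $B(u,\vw^2 v)=h(Ju,J^2v)$; applying the $J$-compatibility to the pair $(u,Jv)$ gives $\overline{h(Ju,J^2v)}=h(u,Jv)$, and Hermitian symmetry then yields $h(Ju,J^2v)=h(Jv,u)=B(v,u)$, establishing the $\vw$-symmetry.

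For the backward direction, given $B$ recover $J$ via $B(u,v)=h(Ju,v)$; the intertwining identity holds by the setup. The map $J^2$ is $\C$-linear, and the intertwining identity implies that $\rho(\vw^2)^{-1}J^2$ commutes with every $\rho(\vg)$, so by Schur's Lemma $J^2=\lambda\rho(\vw^2)$ for some $\lambda\in\C^{\times}$. The critical step is to show $\lambda\in\R_{>0}$: substituting $w:=Jv$, the $\vw$-symmetry $B(u,\vw^2 v)=B(v,u)$ rewrites as $h(Ju,Jw)=\lambda h(w,u)$ for all $u,w\in V$. Taking complex conjugates and using Hermitian symmetry gives $h(Jw,Ju)=\bar\lambda h(u,w)$, while swapping $u$ and $w$ in the original identity gives $h(Jw,Ju)=\lambda h(u,w)$; hence $\lambda=\bar\lambda\in\R$. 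Setting $w=u\neq 0$ and using positivity of $h(u,u)$ and $h(Ju,Ju)$ forces $\lambda>0$. Rescaling $B$ by $\lambda^{-1}$ rescales $J$ by the positive real $\lambda^{-1/2}$ and achieves $J^2=\rho(\vw^2)$. Setting $\rho(\vw):=J$ and extending to $\G\setminus G$ by $\rho(\vg\vw):=\rho(\vg)J$ yields a homomorphism $\G\to\GL^*(V)$, since the relations $\rho(\vw)^2=\rho(\vw^2)$ and $\rho(\vw)\rho(\vg)\rho(\vw)^{-1}=\rho(\vw\vg\vw^{-1})$ (together with $\rho|_G$) present the extension $G\leq\G$.

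The main obstacle is engineering the delicate normalization $J^2=\rho(\vw^2)$ in the backward direction; the precise form of the $\vw$-symmetry condition $B(u,\vw^2 v)=B(v,u)$ (as opposed to $\vw$-alternating) is exactly what forces the Schur scalar $\lambda$ to be a positive real, allowing the rescaling to succeed and producing a genuine A-representation.
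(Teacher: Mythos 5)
Your proof is correct and takes essentially the same route as the paper: both directions mediate between $B$ and an antilinear intertwiner $J$ via a $G$-invariant Hermitian form, apply Schur's Lemma to get $J^2=\lambda\rho(\vw^2)$, and use positivity of the Hermitian form to force $\lambda\in\R_{>0}$ before rescaling and extending $\rho$ to the odd coset. The only slip is cosmetic: rescaling $B$ by $\lambda^{-1}$ rescales $J$ by $\lambda^{-1}$, not $\lambda^{-1/2}$ --- you should rescale $B$ by $\lambda^{-1/2}$ (or simply replace $J$ by $\lambda^{-1/2}J$ directly, which is all that is needed), and this does not affect the validity of the argument.
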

\begin{proof}
  Suppose that $V = W \res{\C G}{\CGG}$.
  Let $\langle \cdot, \cdot \rangle$ be a $G$-invariant hermitian inner product on $V$, which we assume to be linear in the first entry and semilinear in the second entry. Define $B(u,v) \coloneqq\langle u, \vw^{-1} v \rangle + \langle v, \vw u \rangle$.
  This is bilinear, as $\vw$ is antilinear, and $B(u,\vw u) = \langle u, u \rangle+ \langle \vw u, \vw u \rangle > 0$ for $u\neq 0$.
  Thus, $B$ is the required form.

  Conversely, suppose there exists such a bilinear form $B$.
  Let $\langle \cdot, \cdot \rangle$ be a $G$-invariant hermitian inner product on $V$.
  For each $u \in V$, the map $\fn{\langle \cdot , u \rangle}{V}{\C}$ is linear.
  Since $B$ is non-degenerate, there exists a unique $J(u) \in V$ with $B(\cdot , J(u)) = \langle \cdot , u \rangle$.
  The map $J$ is $\C$-antilinear and bijective since $\langle \cdot, \cdot \rangle$ is non-degenerate.
  In fact, it is an isomorphism of $\C G $-modules $\fn{J}{V}{\vw \cdot V}$, since $J(\vg u) = \vw \vg \vw^{-1} J(u)$. 
  Therefore, $\fn{\vw^{-2}J^2}{V}{V}$ is an isomorphism of $\C G$-modules.
  By Schur's Lemma, $\vw^{-2}J^2= r\Id_V$ for some $r \in \C$.
  Observe that
  $$
  \langle J(v) , J(v) \rangle = B(J(v),J^2(v)) 
= rB(J(v), \vw^{2} v) 
= rB(v,J(v)) 
= r \langle v,v \rangle,
$$
implying that $r \in \R_{>0}$.
Let $J^\prime \coloneqq r^{-1/2}J$. 
Clearly, $\vw^{-2}J^{\prime 2}=   r^{-1} \vw^{-2}J^2= \Id_V$.

Let us extend $\rho$ to odd elements by $\rho(\vg \vw) \coloneqq \rho(\vg) \circ J^{\prime}$, ensuring that the odd elements act antilinearly. It is easily verified that this extension is a homomorphism, using that $J^\prime \circ \rho(\vh) = \rho(\vw \vh \vw^{-1}) \circ J^\prime$ and, crucially, $\vw^{-2} J^{\prime 2} = 1$.
\end{proof}

We can relate Theorem \ref{biltypes} to subspaces of $(V \otimes \vw \cdot V)^*$. Define
\begin{equation}
  \fn{\tau}{(V \otimes \vw \cdot V)^*}{(V \otimes \vw \cdot V)^*}, \
  \tau(f)(u \otimes v) \coloneqq f(v \otimes \vw^2 u).
\end{equation}  
\begin{cor}\label{bildecomp}
Let $V$ be a simple $\C G $-module, with $V \cong \vw \cdot V^{*}$. Then
\[
((V \otimes \vw \cdot V)^*)^G =
\twopartdef{+1\mbox{-eigenspace of }\tau,}{V \text{ is Realisable,}}{-1\mbox{-eigenspace of }\tau,}{V \text{ is not Realisable.}}
\]
\end{cor}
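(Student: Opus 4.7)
The plan is to transport $\tau$ across the isomorphism $(V \otimes \vw \cdot V)^* \cong \Bil(V \times \vw \cdot V, \C)$ of Lemma~\ref{3iso}, read off what it does to a bilinear form, and then invoke Theorem~\ref{biltypes}.

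First I would unwind the definitions. Under $f \leftrightarrow B$ with $B(u,v) = f(u\otimes v)$, the operator $\tau$ corresponds to the map $B \mapsto B'$ given by
\[
B'(u,v) = f(v\otimes \vw^2 u) = B(v,\vw^2 u).
\]
Hence $\tau(f) = f$ is equivalent to $B(u,v) = B(v,\vw^2 u)$, which, after swapping the roles of $u$ and $v$, is exactly the $\vw$-symmetric condition $B(u,\vw^2 v) = B(v,u)$. Similarly $\tau(f) = -f$ is equivalent to the $\vw$-alternating condition $B(u,\vw^2 v) = -B(v,u)$.

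Next I would verify that $\tau$ preserves the $G$-invariants and is an involution on them. For $G$-invariance of $\tau(f)$ when $f$ is $G$-invariant, one computes
\[
\tau(f)(\vg u\otimes \vw\vg\vw^{-1}v) = f(\vw\vg\vw^{-1}v\otimes \vw^2\vg u)
\]
and rewrites $\vw^2\vg = (\vw\cdot\vw\vg\vw^{-1}\cdot\vw^{-1})\vw^2$ so that this equals $f(v\otimes \vw^2 u) = \tau(f)(u\otimes v)$ by $G$-invariance of $f$. For $\tau^2 = \id$, one has $\tau^2(f)(u\otimes v) = f(\vw^2 u\otimes \vw^2 v)$, and since $\vw^2\in G$, $G$-invariance of $f$ (applied with the element $\vw^2$, whose twist is $\vw\cdot \vw^2\cdot \vw^{-1} = \vw^2$) forces this to equal $f(u\otimes v)$.

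Finally I would close the argument using Theorem~\ref{biltypes}. Because $V$ is simple with $V\cong \vw\cdot V^*$, Schur's lemma together with \eqref{3iso_inv} shows that $((V\otimes\vw\cdot V)^*)^G$ is one-dimensional. An involution on a one-dimensional space is either $+\id$ or $-\id$, and by the computation above the sign is $+1$ precisely when the unique (up to scalar) $G$-invariant form is $\vw$-symmetric, and $-1$ precisely when it is $\vw$-alternating. Theorem~\ref{biltypes} identifies the former case with $V$ being Realisable and the latter with $V$ not being Realisable, which is exactly the claimed dichotomy. The only mildly delicate point is the bookkeeping with the twist by $\vw$ in the second tensor factor when checking that $\tau$ preserves $G$-invariants; once that is in hand, the rest is immediate.
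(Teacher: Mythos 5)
Your proof is correct and follows exactly the route the paper intends: the paper states this corollary without proof as an immediate consequence of Lemma~\ref{3iso} and Theorem~\ref{biltypes}, and your unwinding of $\tau$ into the condition $B(u,\vw^2 v)=\pm B(v,u)$, together with the check that $\tau$ preserves the one-dimensional space of $G$-invariants and squares to the identity there, is precisely the missing bookkeeping. (The only caveat is inherited from the paper's own phrasing: strictly speaking the invariants are \emph{contained in} the relevant eigenspace of $\tau$ on the full dual, rather than equal to it, but your argument establishes the intended content.)
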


\subsection{Standard Real Structure}\label{standard_real}
How does Real theory generalise the real representation theory of $G$?
The group $G$ admits the standard Real structure $G\leq G \times C_2$.
The following observation is immediate.
\begin{prop}\label{realantiequiv}
The following categories are equivalent: 
\begin{align*}
 \R\text{-representations of } G\:  & &\stackrel{\cong}{\longleftrightarrow}& & &\: \text{A-representations of } G \leq G \times C_2 \, .
\end{align*}
\end{prop}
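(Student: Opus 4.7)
The plan is to construct explicit quasi-inverse functors, using the fact that the category of complex vector spaces equipped with an antilinear involution is equivalent to the category of real vector spaces via fixed points and complexification.

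First, I would unpack what an A-representation of $G \leq G \times C_2$ is. Such a structure is a homomorphism $\rho : G \times C_2 \to \GL^*(W)$ of $C_2$-graded groups, and is uniquely determined by the restriction $\rho_G = \rho|_{G \times \{1\}} : G \to \GL(W)$ together with the single antilinear operator $J := \rho(e_G,-1)$. Since $(e_G,-1)^2 = (e_G,1)$, we have $J^2 = \Id_W$, and since $(e_G,-1)$ is central in $G \times C_2$, the map $J$ is $G$-equivariant. Conversely, any complex $G$-representation $(W,\rho_G)$ equipped with a $G$-equivariant antilinear involution $J$ extends uniquely to an A-representation via $\rho(\vg,-1) = \rho_G(\vg) \circ J$. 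Similarly, one checks that A-homomorphisms correspond to $\C$-linear $G$-equivariant maps that additionally commute with the respective involutions.

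Next, I would invoke the classical equivalence between complex vector spaces with an antilinear involution and real vector spaces: send $(W,J)$ to the real subspace $W^J = \{w \in W : J(w) = w\}$, and send a real vector space $U$ to $U \otimes_{\R} \C$ equipped with the standard conjugation $J(u \otimes z) = u \otimes \bar z$. The decomposition $W = W^J \oplus i\, W^J$ shows that $W^J$ is a real form of $W$, and this gives a natural isomorphism $W^J \otimes_{\R} \C \xrightarrow{\cong} W$ by $w \otimes z \mapsto zw$. In the other direction, the inclusion $U = U \otimes 1 \subseteq U \otimes_\R \C$ identifies $U$ with $(U \otimes_\R \C)^J$.

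Finally, I would promote this to $G$-equivariant data. Because $J$ commutes with $\rho_G$, the real subspace $W^J$ is $G$-invariant, and the $G$-action restricts to a real representation. Conversely, if $U$ is a real $G$-representation, then $G$ acts on $U \otimes_\R \C$ through the first factor and evidently commutes with the standard $J$. The natural isomorphisms above are $G$-equivariant because $G$ acts by $J$-equivariant $\C$-linear operators, and morphisms transport correctly by the observation that $\R$-linear $G$-equivariant maps $U_1 \to U_2$ correspond bijectively to $\C$-linear $G$-equivariant $J$-commuting maps $U_1 \otimes_\R \C \to U_2 \otimes_\R \C$. I do not expect a real obstacle here; the proof is a routine assembly of the two classical equivalences (real forms of complex vector spaces, and the tautological reinterpretation of an A-representation in the split case) with the observation that everything is compatible with the $G$-action because $J$ lies in the centre.
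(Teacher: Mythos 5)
Your proposal is correct and follows essentially the same route as the paper: the paper's two functors are exactly your complexification $U\mapsto \C\otimes_{\R}U$ with the standard conjugation, and the fixed-point functor $W\mapsto W^{\times}=W^{J}$ for the central antilinear involution $J=\rho(\ve,-1)$. You merely spell out the verifications (the decomposition $W=W^{J}\oplus iW^{J}$, $G$-equivariance of $J$, and the correspondence of morphisms) that the paper leaves implicit.
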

\begin{proof}
On real representations the equivalence is given by 
\begin{align*}
(U,\mu) & &\longmapsto& & &(\C \otimes_{\R} U,\rho),\  \rho(\vg,\epsilon)(a \otimes u ) \coloneqq {}^{\epsilon}a \otimes \mu(\vg)(u)\, .
\end{align*}
On A-representations of $G \leq G \times C_2$ the equivalence is given by 
\begin{align*}
&(W,\mu) & &\longmapsto & &(W^{\times},\rho), \  W^{\times}\coloneqq \{w \in W \mid (\ve, -1)w = w\} \\
& & & & & \rho( \vg) (w) \coloneqq \mu(\vg,1) (w) = \mu (\vg,-1) (w) \, .  \qedhere
\end{align*}
\end{proof}

We denote the above functors by $\C \otimes_{\R} -$ and $(-)^{\times}$ respectively. The Complexification admits a simpler form for the standard Real structure.
\begin{lem}\label{stdcomplex}
  For a $\C G$-module $V$, there is a natural isomorphism of  $\bC {\ast} (G \times C_2)$-modules
  $$
  V\ind{\C G }{\bC {\ast} (G \times C_2) } \cong \C{\ast} C_2 \otimes_{\bC} V \, ,
  $$
    where $\bC {\ast} C_2 \otimes_{\C} V$ is a $\C {\ast} (G \times C_2)$-module via
    $a(\vg,\epsilon) \cdot (x \otimes v) = a(\ve,\epsilon)x \otimes \vg v$.
\end{lem}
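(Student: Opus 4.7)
The plan is to write down an explicit isomorphism and its inverse on simple tensors, then verify well-definedness and the module-map property. The guiding intuition is that the factorisation $(\vg,\epsilon) = (\ve,\epsilon)(\vg,1)$ in $G\times C_2$ lets one push the $G$-component across $\otimes_{\bC G}$ onto $V$, leaving only the $C_2$-component on the left of the tensor.

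Concretely, I would define
\[
\phi : V\ind{\C G}{\bC\ast(G\times C_2)} \longrightarrow \bC\ast C_2 \otimes_{\bC} V, \qquad a(\vg,\epsilon)\otimes v \longmapsto a(\ve,\epsilon)\otimes \vg v,
\]
and propose as inverse $\psi(a(\ve,\epsilon)\otimes v) = a(\ve,\epsilon)\otimes v$, viewed through the inclusion $\bC\ast C_2 \hookrightarrow \bC\ast(G\times C_2)$. Well-definedness of $\phi$ over $\bC G$ reduces to checking, for $c\vh\in\bC G$, that $\phi(a(\vg,\epsilon)(c\vh)\otimes v)$ and $\phi(a(\vg,\epsilon)\otimes(c\vh)v)$ both evaluate to $a\,{}^{\epsilon}c\,(\ve,\epsilon)\otimes \vg\vh v$. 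The nontrivial point is that the skew relation $(\ve,\epsilon) c = {}^{\epsilon}c\,(\ve,\epsilon)$ in $\bC\ast C_2$ matches exactly the conjugation arising from $a(\vg,\epsilon)(c\vh) = a\,{}^{\epsilon}c\,(\vg\vh,\epsilon)$ in the source; $\bC$-linearity of the $G$-action on $V$ supplies the rest.

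Verifying that $\phi$ is a left $\bC\ast(G\times C_2)$-module map is a direct computation using the given action formula: both $\phi(b(\vk,\delta)\cdot a(\vg,\epsilon)\otimes v)$ and $b(\vk,\delta)\cdot \phi(a(\vg,\epsilon)\otimes v)$ reduce to $b\,{}^{\delta}a\,(\ve,\delta\epsilon)\otimes \vk\vg v$. For $\psi$, well-definedness over $\bC$ is the mirror of the previous check, since $c\in\bC\subset\bC G$ may be moved across $\otimes_{\bC G}$ to produce the required conjugation on the left. The identity $\phi\circ\psi=\mathrm{id}$ is immediate, while $\psi\circ\phi=\mathrm{id}$ follows from $a(\ve,\epsilon)\otimes \vg v = a(\ve,\epsilon)(\vg,1)\otimes v = a(\vg,\epsilon)\otimes v$ inside the induction, the middle equality using that $(\vg,1)\in\bC G$ passes through the tensor freely. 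Naturality in $V$ is forced by the formulas.

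The main obstacle is purely notational: one must carefully distinguish scalars being moved past an odd element of the skew group algebra (which conjugates them) from scalars being moved across $\otimes_{\bC G}$ (which does not). With that bookkeeping in place, every verification collapses to repeated applications of the single skew relation $(\ve,-1)c = \bar c\,(\ve,-1)$ together with the $\bC$-linearity of the $G$-action on $V$.
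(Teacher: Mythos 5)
Your proposal is correct and coincides with the paper's proof: the paper defines exactly the same map $f(a(\vg,\epsilon)\otimes v)=a\epsilon\otimes\vg v$ with inverse $f^{-1}(r\otimes v)=r\otimes v$ via the inclusion $\C\ast C_2\hookrightarrow\C\ast(G\times C_2)$. You merely spell out the well-definedness and module-map verifications that the paper leaves implicit, and those checks are all accurate.
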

\begin{proof} The isomorphism is written explicitly as
  $$
  \fn{f}{\C {\ast} (G \times C_2) \otimes_{\C G } V}{\C {\ast} C_2 \otimes_{\C} V}\, , \
  f(a(\vg, \epsilon) \otimes v) = a\epsilon \otimes \vg v
  $$
and its inverse is   $f^{-1}(r \otimes v) = r \otimes v$.
\end{proof}

Let us consider the following functor diagram:
\[
\begin{tikzcd}
    \Mod{\R G } \arrow[rr,yshift=.5ex,"\C \otimes_{\R} - "] \arrow[ddr,xshift=.75ex,"(-)_{\C}"] & & \Mod{\C {\ast} (G \times C_2)} \arrow[ll ,yshift=-.5ex,"(-)^{\times}"] \arrow[ddl, yshift=-.75ex,"\res{\C G }{\C {\ast} (G \times C_2)}"] \\ & & \\
    & \Mod{\C G } \arrow[uur, yshift=.6ex, xshift=.5, "{\C\ast C_2} \otimes_{\C} - "] \arrow[uul,xshift=-.75ex,"{}_{\R} (-)"] &
  \end{tikzcd}
\]
The next proposition asserts that Realification is a generalisation of realification.
\begin{prop}\label{realgen}
  For a $\C G $-module $V$, we have a natural isomorphism of $\bR G$-modules
  ${}_{\R} V \cong \left( \C {\ast} C_2 \otimes_{\C} V \right)^{\times}$.
\end{prop}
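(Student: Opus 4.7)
The plan is to exhibit an explicit natural $\R G$-isomorphism. First, set $\tau = (\ve, -1) \in C_2 \subset \C\ast C_2$, so that $\tau^2 = 1$ and $\tau c = \bar{c}\tau$ for $c \in \C$. A short check shows $\{1,\tau\}$ is a right $\C$-basis of $\C\ast C_2$, and consequently every element of $\C\ast C_2 \otimes_{\C} V$ is uniquely of the form $1\otimes v_1 + \tau\otimes v_2$ for some $v_1,v_2 \in V$. Using the formula of Lemma \ref{stdcomplex}, the action of $(\ve,-1)$ sends $1\otimes v_1 + \tau\otimes v_2$ to $\tau\otimes v_1 + 1\otimes v_2$, so that
\[
(\C\ast C_2 \otimes_{\C} V)^{\times} = \{1\otimes v + \tau\otimes v : v \in V\}.
\]

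Next I would define $\Phi: {}_{\R}V \to (\C\ast C_2 \otimes_{\C} V)^{\times}$ by $\Phi(v) = 1\otimes v + \tau\otimes v$, with the obvious inverse $1\otimes v + \tau\otimes v \mapsto v$. Bijectivity is immediate from the previous paragraph. $\R$-linearity follows from the $\R$-linearity of both $v \mapsto 1\otimes v$ and $v \mapsto \tau\otimes v$ (the second is $\C$-antilinear because $\tau i = -i\tau$, but this only affects the complex structure, not the real one). For $G$-equivariance, Lemma \ref{stdcomplex} gives $(\vg, 1)\cdot(1\otimes v + \tau\otimes v) = 1\otimes \vg v + \tau\otimes \vg v = \Phi(\vg v)$, and by Proposition \ref{realantiequiv} this is precisely the $G$-action on the target transported through $(-)^{\times}$. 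Naturality in $V$ is manifest from the definition of $\Phi$.

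I do not foresee a serious obstacle: the argument is a direct verification using the explicit right $\C$-basis $\{1,\tau\}$ of $\C\ast C_2$. The one point worth flagging in the write-up is that the decomposition $\C\ast C_2 \otimes_{\C} V = (1\otimes V) \oplus (\tau\otimes V)$ is a decomposition of $\R$-vector spaces, not of $\C$-vector spaces, precisely because of the anticommutation $\tau i = -i\tau$; this is exactly why the map $\Phi$, though only $\R$-linear, nonetheless identifies the restriction ${}_{\R}V$ with the fixed subspace.
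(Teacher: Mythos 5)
Your proposal is correct and coincides with the paper's proof: your map $\Phi(v)=1\otimes v+\tau\otimes v$ is exactly the paper's $f(v)=((\ve,1)+(\ve,-1))\otimes v$, and your inverse is the same as theirs. The explicit discussion of the right $\C$-basis $\{1,\tau\}$ and of why $\Phi$ is only $\R$-linear (not $\C$-linear) is a reasonable elaboration of the same argument rather than a different route.
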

\begin{proof} The isomorphism is written explicitly as
  $$
  \fn{f}{{}_{\R} V}{\left( \bC {\ast} C_2 \otimes_{\C} V \right)^{\times}}\, , \  f(v) = ((\ve,1)+(\ve,-1)) \otimes v
$$
  and its inverse is   $f^{-1}((a(\ve,1) + b(\ve,-1)) \otimes v) = av$. 
\end{proof}

Let us now contemplate Complexification. 
\begin{prop}\label{compgen}
  For a $\C {\ast} (G \times C_2)$-module $V$, we have a natural isomorphism of $\bC G$-modules
  $V\res{\C G}{\C {\ast} (G \times C_2)} \cong (V^{\times})_{\C}$.
\end{prop}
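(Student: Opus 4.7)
The plan is to produce an explicit natural $\C G$-linear isomorphism
\[
f : (V^{\times})_{\C} \longrightarrow V\res{\C G}{\C {\ast} (G \times C_2)}, \qquad f(a \otimes v) = av,
\]
and to verify bijectivity by using the antilinear involution on $V$ coming from the odd element $(\ve, -1)$.

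First I would observe that $(\ve,-1) \in G \times C_2$ is odd and central, so the operator $J \coloneqq \rho(\ve,-1) : V \to V$ is $\C$-antilinear, commutes with $\rho(\vg)$ for every $\vg \in G$, and satisfies $J^{2} = \rho(\ve,1) = \Id_V$. Hence $J$ is an antilinear $G$-equivariant involution, and by definition $V^{\times}$ is precisely its fixed-point set. Standard theory of antilinear involutions gives the real direct-sum decomposition $V = V^{\times} \oplus i V^{\times}$, witnessed by the formulas $v = \tfrac{1}{2}(v + Jv) + i \cdot \tfrac{1}{2i}(v - Jv)$, with both summands visibly fixed by $J$. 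Equivalently, the canonical $\C$-linear map $\C \otimes_{\R} V^{\times} \to V$ sending $a \otimes v$ to $av$ is bijective, with inverse
\[
f^{-1}(v) = 1 \otimes \tfrac{1}{2}(v + Jv) + i \otimes \tfrac{1}{2i}(v - Jv).
\]

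Next I would check $\C G$-linearity of $f$. For $\vg \in G$ and $a \in \C$, the action on $(V^{\times})_{\C}$ is $\vg \cdot (a \otimes v) = a \otimes \vg v$, while on $V\res{\C G}{\C {\ast} (G \times C_2)}$ we have $\vg \cdot (av) = a(\vg v)$ since $\rho(\vg,1)$ is $\C$-linear. Hence $f(\vg \cdot (a \otimes v)) = a(\vg v) = \vg \cdot f(a \otimes v)$. The formula $f(a \otimes v) = av$ is manifestly natural in $V$, since every $\C {\ast} (G \times C_2)$-morphism $\phi : V \to V'$ intertwines the respective antilinear involutions and hence restricts to a $G$-map $V^{\times} \to (V')^{\times}$, whose complexification satisfies $\phi \circ f_V = f_{V'} \circ (\phi^{\times})_{\C}$.

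The only genuine content is the decomposition $V = V^{\times} \oplus i V^{\times}$, which in turn rests on $J$ being an antilinear involution; the rest is a formal check. I expect no real obstacle beyond making this observation clean, so the main mental step is simply to recognise that $(\ve,-1)$ supplies exactly the antilinear involution needed to identify $V$ with the complexification of its real form.
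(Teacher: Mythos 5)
Your proof is correct and takes essentially the same route as the paper: the paper writes down exactly the map $v \mapsto 1 \otimes \tfrac12(v+(\ve,-1)v) - i \otimes \tfrac12(iv+(\ve,-1)iv)$ with inverse $a \otimes v \mapsto av$, which coincides with your $f^{-1}$ and $f$. You simply supply the (correct) justification — that $(\ve,-1)$ gives a central antilinear involution $J$ with real form $V^{\times}$ and $V = V^{\times} \oplus iV^{\times}$ — which the paper leaves implicit.
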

\begin{proof}
  The isomorphism is written explicitly as
  $$
  \fn{f}{V\res{\C G}{\C {\ast} (G \times C_2)}}{\C \otimes_{\R}V^{\times}}\, , \
  f(v) = \frac{1}{2} ( 1 \otimes (v + (\ve,-1)v) - i \otimes (i v + (\ve,-1) i v))
  $$
  and its inverse is $f^{-1}(a \otimes v) = a v$.
\end{proof}

Finally, we can state a corollary asserting that Complexification is a generalisation of complexification.
\begin{cor}
  For an $\R G $-module $U$, we have a natural isomorphism of $\bC G$-modules
$U_{\C} \cong  (\C \otimes_{\R} U)\res{\C G }{\C\ast (G \times C_2)}$.
\end{cor}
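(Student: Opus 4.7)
The plan is to deduce this corollary immediately by combining Proposition~\ref{compgen} with the equivalence of categories in Proposition~\ref{realantiequiv}. I would not write out any new isomorphism from scratch; instead the two preceding results already do all the work.

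First, I would apply Proposition~\ref{compgen} to the $\bC \ast (G \times C_2)$-module $V = \bC \otimes_{\bR} U$, which is the A-representation associated to $U$ under the equivalence of Proposition~\ref{realantiequiv}. This gives a natural isomorphism of $\bC G$-modules
\[
(\bC \otimes_{\bR} U)\res{\bC G}{\bC\ast (G \times C_2)} \;\cong\; \bigl((\bC \otimes_{\bR} U)^{\times}\bigr)_{\bC}.
\]
Next, since $\bC \otimes_{\bR} -$ and $(-)^{\times}$ are mutually inverse equivalences by Proposition~\ref{realantiequiv}, there is a natural isomorphism of $\bR G$-modules $(\bC \otimes_{\bR} U)^{\times} \cong U$. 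Applying the complexification functor $(-)_{\bC}$ (which is functorial in its input) yields a natural isomorphism of $\bC G$-modules $\bigl((\bC \otimes_{\bR} U)^{\times}\bigr)_{\bC} \cong U_{\bC}$. Composing gives the desired natural isomorphism $U_{\bC} \cong (\bC \otimes_{\bR} U)\res{\bC G}{\bC \ast (G \times C_2)}$.

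There is no real obstacle here; the statement is a formal consequence of the two previous propositions. The only minor point to verify is that the isomorphism in Proposition~\ref{realantiequiv} is sufficiently natural that applying $(-)_{\bC}$ to it produces an isomorphism of $\bC G$-modules rather than merely an abstract one, but this is immediate from the explicit formulas given there. Writing the composite explicitly (if desired for the reader's benefit) amounts to $u \mapsto \tfrac{1}{2}(1 \otimes (1 \otimes u) - i \otimes (i \otimes u))$ after unwinding Proposition~\ref{compgen}, but there is no need to verify anything at this level because both cited results are already established.
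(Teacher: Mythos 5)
Your proposal is correct and is exactly the derivation the paper intends: the corollary is stated without proof precisely because it follows formally from Proposition~\ref{compgen} applied to $V = \C \otimes_{\R} U$ together with the natural isomorphism $(\C \otimes_{\R} U)^{\times} \cong U$ coming from the equivalence in Proposition~\ref{realantiequiv}. The only quibble is that your parenthetical explicit formula at the end is not quite the map one gets by unwinding the cited isomorphisms, but since you correctly note that nothing needs to be checked at that level, this does not affect the argument.
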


\section{Antilinear Blocks} \label{Ch:blocks}
\subsection{Graded Division Algebras} \label{gr_division}
A $C_2$-graded algebra $\bF=\bF_{+} \oplus \bF_{-}$ is called graded division algebra if each non-zero homogeneous element is invertible.
Assuming that $\bF$ is real and finite-dimensional, we conclude that $\bF_{+}\in \{ \bR, \bC, \bH \}$. It is still possible that $\bF_{-}=0$ and the grading is trivial.

\begin{prop}
All real $C_2$-graded finite-dimensional division algebras with non-trivial grading are summarised in table~\ref{table_graded}. The underlying grading is unique up to isomorphism of $C_2$-graded algebras and can be given by $\iota$.
\end{prop}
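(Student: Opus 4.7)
The plan is to realise $\bF$ as a crossed product of $\bF_{+}$ with $C_2$ and then normalise the data. Fix a non-trivially $C_2$-graded real finite-dimensional division algebra $\bF=\bF_{+}\oplus\bF_{-}$ and choose any $0\neq\varpi\in\bF_{-}$. Since all non-zero homogeneous elements are invertible, left multiplication by $\varpi$ identifies $\bF_{+}$ with $\bF_{-}$, so $\bF=\bF_{+}\sharp_{\phi,a}C_2$ in the sense of \eqref{crossed_product}, with $\phi(x)=\varpi x\varpi^{-1}$ an $\bR$-algebra automorphism of $\bF_{+}$ and $a=\varpi^{2}\in\bF_{+}^{\times}$ satisfying $\phi^{2}=\mathrm{ad}(a)$. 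The only remaining freedom is to replace $\varpi$ by $c\varpi$ for $c\in\bF_{+}^{\times}$, which sends $(\phi,a)$ to $(\mathrm{ad}(c)\circ\phi,\, c\phi(c)a)$.

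I would then split on $\bF_{+}$, which by Frobenius's theorem is one of $\bR,\bC,\bH$. For $\bF_{+}=\bR$ only $\phi=\mathrm{id}$ occurs and $a$ is invariant up to positive squares, so the sign of $a$ gives $\bR\oplus\bR$ or $\bC$. For $\bF_{+}=\bC$, either $\phi=\mathrm{id}$ (so $\bF$ is commutative, $a$ scales to $1$, and $\bF=\bC\oplus\bC$) or $\phi$ is complex conjugation (forcing $a\in\bR^{\times}$ with sign invariant under $|c|^{2}$, giving $\bH$ and $M_{2}(\bR)$ respectively). For $\bF_{+}=\bH$, Skolem--Noether writes $\phi=\mathrm{ad}(q)$, and the choice $c=q^{-1}$ reduces $\phi$ to the identity; the residual $a$ then lies in $Z(\bH)=\bR$ and its sign distinguishes $\bH\oplus\bH$ from $\bH\otimes_{\bR}\bC\cong M_{2}(\bC)$. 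Each of the seven resulting algebras is a graded division algebra because $c\varpi$ has the explicit inverse $\phi^{-1}(c^{-1})a^{-1}\varpi$.

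For the uniqueness assertion I would use that a $C_2$-grading on an $\bR$-algebra $\bF$ is the same datum as an order-two $\bR$-algebra automorphism $\iota$ with $\bF_{\pm}$ the $\pm 1$-eigenspaces, so two gradings give isomorphic graded algebras if and only if the corresponding $\iota$'s are $\Aut_{\bR}(\bF)$-conjugate. One then checks case by case that, subject to the constraint that the fixed subalgebra has the prescribed isomorphism type, all such $\iota$'s lie in a single orbit, and writes down a concrete $\iota$ for each row of the table. The main obstacle is this last step: for the split-type algebras $\bC\oplus\bC$, $M_{2}(\bR)$, and $M_{2}(\bC)$ the automorphism group is non-trivial and supports several conjugacy classes of involutions, so the combination of the prescribed isomorphism type of $\bF_{+}$ together with the dimension equality $\dim_{\bR}\bF_{+}=\dim_{\bR}\bF_{-}$ is essential to pin down the correct orbit.
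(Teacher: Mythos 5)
Your classification argument is correct, and it takes a genuinely different route from the paper. The paper also writes $\bF$ as a crossed product $\bF_{+}\sharp_{\xi,x}C_2$, but then invokes Maschke's theorem for crossed products to get semisimplicity and \emph{cites} the classification of real graded simple division algebras for the simple case, reserving elementary arguments only for the three non-simple algebras $\bK\oplus\bK$. You instead classify the crossed-product data $(\phi,a)$ directly, modulo the change $\varpi\mapsto c\varpi$, using Frobenius's theorem to pin down $\bF_{+}$ and Skolem--Noether to normalise $\phi$ when $\bF_{+}=\bH$; this reproves the cited classification in the relevant low-dimensional setting and makes the proposition self-contained at the cost of a longer case analysis. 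The cocycle bookkeeping checks out: $\phi(a)=a$ forces $a\in\bR^{\times}$ when $\phi$ is conjugation on $\bC$, the scaling $a\mapsto c\phi(c)a$ has image in the positive reals exactly when you claim it does, and your final verification that each normalised crossed product really is a graded division algebra closes the existence half.

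On uniqueness you have talked yourself into an unnecessary difficulty. You do not need a separate computation of conjugacy classes of involutions in $\Aut_{\bR}(\bF)$: your classification already shows that \emph{every} non-trivially graded real finite-dimensional graded division algebra is graded-isomorphic to one of your seven normalised crossed products, and those seven have pairwise distinct pairs $(\bF_{+},\bF)$ of isomorphism types. Hence any two graded division algebra structures on a given $\bF$ with the prescribed even part land on the same normalised model and are therefore graded-isomorphic; the involutions you worry about on $M_2(\bR)$, $M_2(\bC)$, $\bC\oplus\bC$ whose orbits would cause trouble are precisely those whose $(-1)$-eigenspace contains non-invertible elements, so they never arise. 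The only genuine remaining check is that the $\iota$ listed in each row of the table does realise the grading of the corresponding crossed product, which is a direct inspection.
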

\begin{table}
\caption{Classification of real $C_2$-graded division algebras}
\label{table_graded}
\begin{center}
\bgroup
\def\arraystretch{1.3}
\begin{tabular}{|c|c|c|r|c|c|c|} \hline
$\bF_{+}$  & $\bF$ & $x$ & $x^2$  & $\xi(a)$ & $\iota$  & $\bF\sharp_{\iota, -1} C_2$ \\ 
\hline 
$\bR$  & $\bR\oplus\bR$ & $(1,-1)$ & $1$  & $a$ & $(a,b)\mapsto(b,a)$ & $M_2(\bR)$\\
$\bR$  & $\bC$ & $i$ & $-1$  & $a$ & $a\mapsto \overline{a}$ & $\bH$ \\ 
$\bC$  & $\bC\oplus\bC$ & $(1,-1)$ & $1$  & $a$ & $(a,b)\mapsto(b,a)$ &  $M_2(\bC)$\\
$\bC$  & $M_2(\bR)$ & $\begin{pmatrix} 1 & 0  \\ 0 & -1 \end{pmatrix}$ & $1$  & $a$ & $\begin{pmatrix} a & b  \\ c & d \end{pmatrix}\mapsto\begin{pmatrix} d & -c  \\ -b & a \end{pmatrix}$ & $M_2(\bR\oplus\bR)$\\
$\bC$  & $\bH$ & $\vj$ & $-1$  & $\overline{a}$ & $a\mapsto -\vi a\vi$ & $\bH\oplus\bH$ \\
$\bH$  & $\bH\oplus\bH$ & $(1,-1)$ & $1$  & $a$ & $(a,b)\mapsto(b,a)$ &  $M_2(\bH)$\\
$\bH$  & $M_2 (\bC)$ & $\begin{pmatrix} 1 & 0  \\ 0 & -1 \end{pmatrix}$  & $1$  & $-\vk a \vk$ & $\begin{pmatrix} a & b  \\ c & d \end{pmatrix}\mapsto\begin{pmatrix} \overline{d} & -\overline{c}  \\ -\overline{b} & \overline{a} \end{pmatrix}$ & $M_4(\bR)$\\ 
\hline
\end{tabular}
\egroup
\vskip 3mm
\end{center}
\end{table}
\begin{proof}
The table represents $\bF$ as a crossed product $\bF_{+}\sharp_{\xi,x}C_2$ with convenient $x$ and $\xi$ chosen in the table. For consistency, we think of $\bC$ as either $\bR(\vi)$, or $\bR(i)$ and of $\bH$ as $\bR(\vi,\vj,\vk)$ where 
$$
i = \vi = \begin{pmatrix} 0 & 1  \\ -1 & 0 \end{pmatrix} \in M_2(\bR), \ 
\vj = \begin{pmatrix}0 & i\\i & 0\end{pmatrix},
\vk = \begin{pmatrix}i & 0\\0 & -i\end{pmatrix} \in M_2(\bC) \, .
$$

Since $\bF_{+}$ is a division algebra and $\bF\cong \bF_{+}\sharp_{\xi,x}C_2$ is a crossed product, $\bF$ is semisimple by Maschke Theorem for crossed products \cite[Th. 0.1.]{MON}. If $\bF$ is simple, then this is an application of the classification of real graded simple division algebras \cite{Rod,BaZa}.

    The remaining three non-simple examples in the table are the diagonal embeddings of $\bK\in \{ \bR, \bC, \bH \}$ into $\bK\oplus \bK$.
An arbitrary embedding $\pi = (\pi_1, \pi_2) : \bK \hookrightarrow \bK \oplus \bK$ is isomorphic to the diagonal embedding via the map $\pi_1\oplus\pi_2$.    

Nothing else is possible. If $\bF_{+}=\bR$, then $\bR\oplus\bR$ is the only 2-dimensional non-simple semisimple algebra.

If $\bF_{+}=\bC$, then $\bF$ cannot contain $\bR$ as a direct summand because the projection would give an impossible algebra map $\bC\rightarrow \bR$.
Thus, $\bC\oplus\bC$ is the only possible 4-dimensional non-simple semisimple algebra.

Finally, if $\bF_{+}=\bH$, then $\bF$ cannot contain $\bR$, $\bC$ or $M_2(\bR)$ as a direct summand because
again $\bH$ has no algebra maps to these algebras.
Thus, $\bH\oplus\bH$ is the only possible 8-dimensional non-simple semisimple algebra.
\end{proof}

The last column of table~\ref{table_graded} contains the algebra $\bF\sharp_{\iota, -1} C_2$ for our future perusal. 
Notice how the choice of $-1$ affects the crossed product.
For instance, $M_2 (\bC) \sharp_{\iota, 1} C_2 \cong M_2(\bH)$ and $M_2 (\bR) \sharp_{\iota, 1} C_2 \cong M_2(\bC)$.

\subsection{Graded Simple Algebra} \label{b_gsB}
Consider a finite-dimensional $C_2$-graded simple algebra $\bB$.
``Graded simple'' means that there are no non-zero proper graded ideals. 
By the graded Artin-Wedderburn theorem \cite[2.10.10]{NVO},
$\bB$ is graded algebra of matrices over either non-trivially $C_2$-graded or ungraded division algebra $\bF$.
A grading on the algebra of matrices is determined by an $n$-tuple $(\epsilon_1, \ldots \epsilon_n) \in C_2^n$:
$$
M^{gr}_{n} (\bF)_{+} = \{ (a_{i,j}) \,|\, a_{i,j} \in \bF_{\epsilon_i\epsilon_j^{-1}} \} \, , \, 
M^{gr}_{n} (\bF)_{-} = \{ (a_{i,j}) \,|\,  a_{i,j} \in \bF_{-\epsilon_i\epsilon_j^{-1}} \} \, .
$$
\begin{lem} \label{gr_matrices}
Suppose that the algebra $\bF$ is graded with $\bF_{-} = \bF_{+} x$ for an invertible $x$.
Then  $M^{gr}_{n} (\bF)_{+} \cong M_{n} (\bF_{+}).$
\end{lem}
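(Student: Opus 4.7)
My plan is to exhibit an explicit isomorphism given by conjugation by a suitable diagonal matrix that absorbs the odd grading into the diagonal. First, a preliminary observation: the hypothesis $\bF_{-} = \bF_{+} x$ together with $\bF$ being a graded division algebra forces $\bF_{-} = x\bF_{+}$ as well. Indeed, $x\bF_{+}$ is a nonzero right $\bF_{+}$-submodule of $\bF_{-}$, which is simple as a right $\bF_{+}$-module (both have the same $\bF_{+}$-dimension, namely one). In particular $x^{-1} \in \bF_{-}$, and $x \bF_{+} x^{-1} = \bF_{+}$.

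Define $D = \mathrm{diag}(d_1, \ldots, d_n) \in M_n(\bF)$ by $d_i = 1$ if $\epsilon_i = 1$ and $d_i = x$ if $\epsilon_i = -1$. Since $x$ is invertible, so is $D$. Consider
$$\phi : M_n(\bF) \longrightarrow M_n(\bF), \qquad \phi(A) = D^{-1} A D.$$
As an inner automorphism of $M_n(\bF)$, $\phi$ is automatically a unital ring automorphism. I will show that its restriction to $M^{gr}_n(\bF)_{+}$ lands inside $M_n(\bF_{+})$ and is a bijection onto it.

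The key verification is a four-case parity check. For $A = (a_{ij}) \in M^{gr}_n(\bF)_{+}$, the $(i,j)$-entry of $\phi(A)$ is $d_i^{-1} a_{ij} d_j$, and $a_{ij} \in \bF_{\epsilon_i \epsilon_j^{-1}}$. When $\epsilon_i = \epsilon_j = 1$, this is just $a_{ij} \in \bF_{+}$. When $\epsilon_i = \epsilon_j = -1$, it is $x^{-1} a_{ij} x \in x^{-1}\bF_{+}x = \bF_{+}$. When $\epsilon_i = 1, \epsilon_j = -1$, it is $a_{ij} x \in \bF_{-}\bF_{-} \subseteq \bF_{+}$. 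When $\epsilon_i = -1, \epsilon_j = 1$, it is $x^{-1} a_{ij} \in \bF_{-}\bF_{-} \subseteq \bF_{+}$. So $\phi$ carries $M^{gr}_n(\bF)_{+}$ into $M_n(\bF_{+})$.

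For surjectivity, given $B = (b_{ij}) \in M_n(\bF_{+})$, the preimage candidate is $DBD^{-1}$, with entries $d_i b_{ij} d_j^{-1}$. An entirely symmetric parity check (using $\bF_{+} x = x \bF_{+} = \bF_{-}$ and $\bF_{+}x^{-1} = x^{-1}\bF_{+} = \bF_{-}$) shows each entry lies in $\bF_{\epsilon_i\epsilon_j^{-1}}$, so $DBD^{-1} \in M^{gr}_n(\bF)_{+}$ and $\phi(DBD^{-1}) = B$. Injectivity is free since $\phi$ is an automorphism of $M_n(\bF)$. The main (minor) obstacle is just to be careful with the left/right versions of $\bF_{-} = \bF_{+}x$; beyond that, the argument is the standard diagonal-conjugation trick that gauges away the grading.
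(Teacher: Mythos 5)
Your proof is correct and is essentially the paper's argument in different clothing: the paper reorders the indices so that the $\epsilon_i=-1$ positions come first and writes the map in block form $\begin{psmallmatrix} A & Bx \\ Cx & D\end{psmallmatrix}\mapsto\begin{psmallmatrix} A & B \\ \xi(C)x^2 & \xi(D)\end{psmallmatrix}$, which is exactly conjugation by the diagonal matrix $\mathrm{diag}(1,\dots,1,x^{-1},\dots,x^{-1})$, i.e.\ the same diagonal-conjugation trick you use (your $D$ differs from theirs only by a global factor of $x$, which twists $M_n(\bF_+)$ by $\xi$ entrywise). Your four-case parity check and the identities $x\bF_+=\bF_+x=\bF_-$, $x^{-1}\in\bF_-$ all hold, so no gap.
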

\begin{proof}
  Without loss of generality, $\epsilon_1 = \ldots = \epsilon_m = -1$
  and
  $\epsilon_{m+1} = \ldots = \epsilon_n = 1$ for some $m$. Then 
  $M^{gr}_{n} (\bF)_{+}$ consists of the matrices of  the form
  $\begin{psmallmatrix} A & B x  \\ C x & D \end{psmallmatrix}$
  where all blocks have coefficients in $\bF_{+}$.
  Define the automorphism $\xi$ by $\xi(a) =  x a x^{-1}$.
  The explicit formula
  $$
  M_{n} (\bF)_{+} \rightarrow M_{n} (\bF_{+}), \
  \begin{pmatrix} A & Bx  \\ C x & D \end{pmatrix} \mapsto
  \begin{pmatrix} A & B  \\ \xi(C)x^2 & \xi(D) \end{pmatrix}
  $$
 yields the required isomorphism. 
\end{proof}

Suppose that $\bB$ is a real graded matrix algebra over a real non-trivially $C_2$-graded division algebra.
Clearly, we have an isomorphism of extensions
\begin{equation} \label{atom_gr}
\bB_{+} \leq \bB \leq \bB\sharp_{\iota, -1} C_2
\ \cong \ 
M_n \big(  \bF_{+} \leq \bF \leq \bF\sharp_{\iota, -1} C_2 \big)
\end{equation}
where $\bF_{+} \leq \bF$ occupies one of the  rows in table~\ref{table_graded}.

Now suppose that $\bB$ is a real graded matrix algebra over an ungraded division algebra with an invertible odd element.
In particular, $\bB=M_n(\bF)$,  $\bF\in \{ \bR, \bC, \bH \}$.
Any scalar matrix $a \Id$, $a\in \bF$ is fixed by the grading automorphism $\iota$.  By Noether-Skolem Theorem, the grading automorphism $\iota$ is a conjugation by $A\in M_{n} (\bF)$. Using Jordan forms (cf. \cite{Hua} for Jordan forms of quaternionic matrices), without loss of generality, $A$ is a diagonal matrix with a series of $-1$ followed by a series of $1$. It follows that
$$
M^{gr}_{n} (\bF)_{+} = \left(\!\!\begin{array}{c|c}\ast & 0\\\hline 0 & \ast\end{array}\!\!\right), \ \ 
M^{gr}_{n} (\bF)_{-} = \left(\!\!\begin{array}{c|c}0 & \ast\\\hline \ast & 0\end{array}\!\!\right). 
$$
Since, by assumption, $M^{gr}_{n} (\bF)_{-}$ has an invertible element,   
it follows that $n=2k$ and all blocks are of size $k\times k$. The following lemmas are immediate.
\begin{lem} \label{ungraded_lem1}
    $M^{gr}_{n} (\bF)_{+} \cong M_{k} (\bF \oplus \bF)$.
\end{lem}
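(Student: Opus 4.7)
The plan is to make the block structure described just before the lemma completely explicit, and then apply one standard decomposition of matrix algebras. By hypothesis, $n = 2k$ and the even part $M^{gr}_n(\bF)_{+}$ consists precisely of block-diagonal matrices
\begin{equation*}
\begin{pmatrix} A & 0 \\ 0 & D \end{pmatrix}, \qquad A,D \in M_k(\bF).
\end{equation*}

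First I would write down the map
\begin{equation*}
\varphi : M^{gr}_n(\bF)_{+} \longrightarrow M_k(\bF) \oplus M_k(\bF), \qquad \begin{pmatrix} A & 0 \\ 0 & D \end{pmatrix} \longmapsto (A,D).
\end{equation*}
Bijectivity is immediate from the block-diagonal description, and $\varphi$ is multiplicative because block-diagonal matrices multiply blockwise; additivity is obvious.

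Second, I would invoke the canonical identification
\begin{equation*}
\psi : M_k(\bF) \oplus M_k(\bF) \longrightarrow M_k(\bF \oplus \bF), \qquad (A,D) \longmapsto \bigl((A_{ij}, D_{ij})\bigr)_{1 \leq i,j \leq k},
\end{equation*}
which is an isomorphism of $\bR$-algebras for any (possibly non-commutative) $\bF$. Bijectivity is clear entry-wise, and multiplicativity holds because addition and multiplication in $\bF \oplus \bF$ are componentwise, so the product of two matrices over $\bF \oplus \bF$ agrees entry-wise with the pair of products of the component matrices over $\bF$. The composition $\psi \circ \varphi$ is then the required isomorphism.

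There is no real obstacle here; the only point deserving care is that $\psi$ be checked multiplicative even when $\bF = \bH$, but this follows purely from the componentwise structure of $\bF \oplus \bF$ and is independent of commutativity of $\bF$.
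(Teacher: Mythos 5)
Your proof is correct and follows exactly the route the paper has in mind: the paper simply declares this lemma ``immediate'' after establishing the block-diagonal description of $M^{gr}_n(\bF)_{+}$, and your argument is a clean spelling-out of that — block-diagonal matrices give $M_k(\bF)\oplus M_k(\bF)$, which is canonically $M_k(\bF\oplus\bF)$, with the componentwise structure of $\bF\oplus\bF$ ensuring multiplicativity regardless of commutativity of $\bF$.
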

\begin{lem} \label{ungraded_lem2}
  $M^{gr}_{n} (\bF) \sharp C_2  \cong M_{k} (M_2 (\bF \sharp C_2)) \cong M_{k} ( M_2(\frac{\bF[\varpi]}{(1+\varpi^2)}))$.
\end{lem}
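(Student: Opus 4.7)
The plan is to exploit the fact that the grading automorphism $\iota$ on $M_n(\bF)$ is inner: it coincides with conjugation by the diagonal matrix $A = \mathrm{diag}(-I_k, I_k)$, which is even and squares to the identity. This means the crossed product $M^{gr}_n(\bF)\sharp C_2$ can be ``untwisted'' by absorbing $A$ into the extending element $\varpi$, converting the twisted crossed product into an honest tensor product.

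Concretely, I would set $\tilde\varpi := \varpi A$ inside $M^{gr}_n(\bF)\sharp C_2$. Since $\iota(A) = A$ (as $A$ is even), the defining relation $\varpi y = \iota(y)\varpi$ specializes to $\varpi A = A\varpi$, so also $\tilde\varpi = A\varpi$. Two short calculations then carry the structural work. First,
\[
\tilde\varpi^2 = (A\varpi)(\varpi A) = A\varpi^2 A = -A^2 = -I.
\]
Second, for any $y \in M_n(\bF)$, conjugation gives
\[
\tilde\varpi\, y\, \tilde\varpi^{-1} = A(\varpi y \varpi^{-1})A^{-1} = A\,\iota(y)\, A^{-1} = A(AyA^{-1})A^{-1} = y,
\]
so $\tilde\varpi$ commutes with all of $M_n(\bF)$.

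Consequently, the subalgebra generated by $M_n(\bF)$ and $\tilde\varpi$ is isomorphic to the tensor product $M_n(\bF)\otimes_\bR \bR[\varpi]/(\varpi^2+1)$, and, since $\bR$ is central in $\bF$, this equals $M_n(\bF[\varpi]/(1+\varpi^2)) = M_n(\bF\sharp C_2)$; here one uses that $\bF$ is ungraded, so $\iota = \mathrm{id}$ on $\bF$ and $\bF\sharp C_2 = \bF[\varpi]/(1+\varpi^2)$ by definition \eqref{crossed_product}. This subalgebra is all of $M^{gr}_n(\bF)\sharp C_2$ because $\varpi = \tilde\varpi \cdot A$ lies in it; equivalently, both sides have real dimension $2n^2\dim_\bR\bF$. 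Composing with the standard identification $M_n(\bF\sharp C_2) = M_{2k}(\bF\sharp C_2) \cong M_k(M_2(\bF\sharp C_2))$ yields both isomorphisms of the lemma.

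The main obstacle is purely bookkeeping: one must handle sign conventions and the inner-automorphism relations in the crossed product carefully. Once the even-ness of $A$ is used to commute $A$ past $\varpi$, both the square computation and the centrality check collapse to one-line identities, and the rest is a dimension count together with an unwinding of the definition of $\sharp$ in the ungraded case.
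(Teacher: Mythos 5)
Your proof is correct, and it supplies exactly the untwisting argument that the paper leaves implicit (the lemma is stated as ``immediate'' after the normalisation of $\iota$ as conjugation by the even involution $A=\mathrm{diag}(-I_k,I_k)$): since the grading automorphism is inner by an even element squaring to the identity, replacing $\varpi$ by $\tilde\varpi=\varpi A$ yields a central square root of $-1$, whence $M^{gr}_{n}(\bF)\sharp_{\iota,-1}C_2\cong M_n(\bF\otimes_{\bR}\bC)=M_{2k}(\bF\sharp C_2)\cong M_k(M_2(\bF\sharp C_2))$. The sign and commutation checks are right, so there is nothing to add.
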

The quotient algebra $\frac{\bF[\varpi]}{(1+\varpi^2)}$ is just the tensor product $\bF\otimes_{\bR}\bC$. It follows that, similarly to~\eqref{atom_gr}, we have an isomorphism of extensions
\begin{equation} \label{atom_unR}
\bB_{+} \leq \bB \leq \bB\sharp_{\iota, -1} C_2
\; \cong \; 
\begin{cases}
  M_k \big(  \bR\oplus\bR \leq M_2 (\bR) \leq M_2 (\bC) \big)  & \mbox{ if } \bF=\bR \, ,\\
  M_k \big(\bC \oplus \bC \leq M_2 (\bC) \leq M_2 (\bC \oplus \bC)\big) & \mbox{ if } \bF=\bC\, ,\\ 
  M_k \big(\bH \oplus \bH \leq M_2 (\bH) \leq M_4 (\bC)\big) & \mbox{ if } \bF=\bH\, . 
\end{cases}
\end{equation}

\subsection{Structure of an A-block}
The algebra $\bR \widehat{G}$ is graded (see~\eqref{crossed_product_grading}).
By the graded Artin-Wedderburn theorem \cite[2.10.10]{NVO}, the algebra is represented as
$$
\bR \widehat{G} = \oplus_k  M^{gr}_{n(k)} (\bF^k) \, ,
$$
a direct sum of $C_2$-graded matrices over graded division algebras $\bF^k$.
Recall that the graded matrices are simply matrices but there are different ways of equipping them with grading.

\begin{lem}\label{element}
  For each $k$  the component $M^{gr}_{n(k)} (\bF^k)_{-}$ contains an invertible element.
\end{lem}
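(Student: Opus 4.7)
My plan is to exhibit a concrete invertible odd element in each component as the image of a single group element, with no case analysis. There are three ingredients, and all are immediate.

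First, the graded Artin-Wedderburn decomposition $\bR \widehat{G} \cong \bigoplus_k M^{gr}_{n(k)}(\bF^k)$ invoked from \cite[2.10.10]{NVO} is an isomorphism of $C_2$-graded algebras, so each component projection $\pi_k : \bR\widehat{G} \to M^{gr}_{n(k)}(\bF^k)$ is a grading-preserving algebra homomorphism; in particular it sends odd elements to odd elements.

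Second, I would pick any $\vw \in \widehat{G} \setminus G$. By the description of the grading in \eqref{crossed_product_grading} we have $\vw \in \bR\widehat{G}_{-}$, and $\vw$ is a unit in $\bR\widehat{G}$ with two-sided inverse equal to the group-theoretic $\vw^{-1}$. Writing the decomposition of the inverse as $\vw^{-1} = (y_k)_k$ under the isomorphism, the identities $\vw\vw^{-1} = \vw^{-1}\vw = 1$ become $\pi_k(\vw) y_k = y_k \pi_k(\vw) = 1_k$ componentwise. Hence each $\pi_k(\vw)$ is an invertible odd element of $M^{gr}_{n(k)}(\bF^k)$, which is exactly what the lemma asks for.

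There is essentially no obstacle. The only point worth flagging is that one genuinely needs the \emph{graded} Artin-Wedderburn theorem, not its ungraded version: without grading-compatible projections one could not conclude that $\pi_k(\vw)$ lands in $M^{gr}_{n(k)}(\bF^k)_{-}$. Once that is granted, the rest is the trivial observation that units in a finite product of algebras project to units in each factor.
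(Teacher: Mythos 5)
Your proof is correct and is essentially the paper's own argument: the paper's one-line proof takes $\vw\Id_{M^{gr}_{n(k)}(\bF^k)}$, which is exactly your $\pi_k(\vw)$, and the invertibility and oddness follow for the same reasons you give. Nothing further is needed.
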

\begin{proof}
If $\vw\in\widehat{G}\setminus G$, then  $\vw \Id_{M^{gr}_{n(k)} (\bF^k)}$ is such an element.
\end{proof}

Let us examine the four algebras in~\eqref{square_1}.
The conjugation by $\vw$ defines an automorphism $\xi$ of all four algebras (cf.~\eqref{crossed_product_CG}).
Let $e\in \bR{G}$ be a primitive central idempotent. Since $\vw^2\in G$, $\xi^2$ is an inner automorphism of $\bR G$ and $\xi^2(e)=e$. There are two cases to consider:
\begin{itemize}
\item{{\bf unsplit case:}} $\xi(e)=e$ so that $f\coloneqq e$ is central in $\CGG$,
\item{{\bf split case:}} $\xi(e)\neq e$ so that $f\coloneqq e + \xi (e)$ is central in $\CGG$.
\end{itemize}
Notice that in the split case $\vw e \neq e \vw$, so $e$ is not central already in $\bR \widehat{G}$.
By {\em an antilinear block} (or simply A-block) we understand the square, obtained from~\eqref{square_1} by the central idempotent $f$:
\begin{equation} \label{square_2}
\begin{tikzcd}
    \sA \coloneqq f\bR G  \arrow[hookrightarrow]{r} \arrow[hookrightarrow]{d} & \sC \coloneqq f\bC G  \arrow[hookrightarrow]{d} \\
    \sB \coloneqq f\bR \G \arrow[hookrightarrow]{r}  & \sD \coloneqq f\CGG
  \end{tikzcd}
\end{equation} 
The algebra $e \bR G $ is simple, hence, $e \bR G \cong M_n (\bF_a)$, where $\bF_a\in \{ \bR, \bC, \bH \}$. Depending on $\bF_a$, we will refer to real, complex or quaternion A-block. The algebra $\sC$ is always complex. The scalars in $\sB$ and $\sD$ are denoted $\bF_b$ and $\bF_d$. By $S_a$ ($S_b$, $S_c$ or $S_d$) we denote a simple module over $\sA$ ($\sB$, $\sC$ or $\sD$). A possible second simple module is denoted by $S^{\prime}_a\not\cong S_a$.  
Note that $\bF_a \cong \End_{\sA}S_a\cong \End_{\sA}S^{\prime}_a$. Ditto for $b$, $c$, $d$. 

Note that the algebras in an A-block are closely related, yet different from the algebras of Dyson $A\leq B \leq D$ \cite{Dyson}.
These are the images of $\bR G \leq \bC G \leq \CGG$ inside $\End_{\bR}S_d$.
Their structure can be deduced from the next theorem.

\begin{thm} (Dyson's Theorem) \label{possibleblocks}
  There are 10 possible structures of the A-block. They are
  summarised in table~\ref{table_block_structure}
  (including the numbers of distinct simple modules for each algebra, a small example of a graded group with its irreducible complex representation realising the given A-block, and a Dyson label DL (cf. \cite[IV]{Dyson})).
  The induction and restriction functors within each A-block are in table~\ref{table_block_induction}.
\end{thm}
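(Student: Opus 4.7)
The plan is to observe that the entire A-block~\eqref{square_2} is determined, up to isomorphism, by the real finite-dimensional $C_2$-graded algebra $\sB = f\bR\G$. Indeed, $\sA = \sB_{+}$ is the even part by~\eqref{crossed_product_grading}; restricting~\eqref{crossed_product_RGhat} to the central idempotent $f$ yields $\sD = \sB\sharp_{\iota,-1}C_2$; and since $f\in\bR G$, one has $\sC = f\bC G \cong \sA\otimes_{\bR}\bC$. By construction $f$ picks out a single graded simple summand of $\bR\G$, so $\sB$ is $C_2$-graded simple, and by Lemma~\ref{element} its odd part $\sB_{-}$ contains an invertible element.

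Next I would apply the analysis of Section~\ref{b_gsB}. The graded Artin--Wedderburn theorem gives $\sB \cong M^{gr}_{n}(\bF)$ for some finite-dimensional real graded division algebra $\bF$, and the existence of an invertible odd element in $\sB$ forces either $\bF$ itself to be non-trivially graded, or $\bF$ ungraded with $\sB$ having the specific shape studied around~\eqref{atom_unR}. In the first case, Lemma~\ref{gr_matrices} Morita-reduces the extension~\eqref{atom_gr} to the atomic extension $\bF_{+}\leq \bF\leq \bF\sharp_{\iota,-1}C_2$, and Table~\ref{table_graded} lists the seven possibilities for $\bF$. In the second case, $\bF\in\{\bR,\bC,\bH\}$, and Lemmas~\ref{ungraded_lem1} and~\ref{ungraded_lem2}, assembled into~\eqref{atom_unR}, provide three further atomic extensions. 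Together these produce exactly ten cases.

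For each of the ten atomic extensions I would then compute $\sC = \sA\otimes_{\bR}\bC$ using the standard identifications $\bR\otimes_{\bR}\bC \cong \bC$, $\bC\otimes_{\bR}\bC \cong \bC\oplus\bC$, $\bH\otimes_{\bR}\bC \cong M_2(\bC)$, together with the fact that $-\otimes_{\bR}\bC$ commutes with direct sums and matrix constructions. The numbers of simple modules over $\sA,\sB,\sC,\sD$ then follow immediately from their Wedderburn decompositions, completing Table~\ref{table_block_structure}. The tabulated examples are verified individually by computing $f\bR\G$ for each listed graded group and matching it to one of the ten atomic cases.

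Finally, to fill in Table~\ref{table_block_induction}, in each of the ten atomic cases the induction and restriction functors along the four inclusions of~\eqref{square_2} are determined by the explicit matrix-algebra descriptions, by Frobenius reciprocity (Proposition~\ref{FR}), and by a count of dimensions over the relevant centres. The main obstacle is organisational rather than mathematical: in the split cases, where some Wedderburn decomposition contains two isomorphic summands, one must identify carefully which simple $\sA$-module is carried to which simple $\sB$-module (and likewise for $\sC$ and $\sD$). This reduces to a finite mechanical check once the atomic extension is fixed, and yields the entries of Table~\ref{table_block_induction}.
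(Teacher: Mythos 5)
Your overall strategy coincides with the paper's: reduce the A-block to the $C_2$-graded algebra $\sB=f\bR\G$ with an invertible odd element, classify it via the graded Artin--Wedderburn theorem using Table~\ref{table_graded} together with~\eqref{atom_unR}, and then read off $\sA$, $\sC$, $\sD$ and the two tables. There is, however, one genuine gap: the sentence ``by construction $f$ picks out a single graded simple summand of $\bR\G$, so $\sB$ is $C_2$-graded simple'' asserts precisely the statement that needs proof, and it is the one nontrivial step. The idempotent $f$ is built from a primitive central idempotent $e$ of $\bR G$ (as $e$ or $e+\xi(e)$), not from the graded block decomposition of $\bR\G$, so it is not automatic that $f\bR\G$ has no proper nonzero graded ideal. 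In the unsplit case this is easy: $\sA=e\bR G$ is simple, so a graded ideal $I$ has $I_{+}=0$, and invertibility of $f\vw$ forces $I_{-}=0$. But in the split case $\sA=e\bR G\oplus\xi(e)\bR G$ is not simple, and one must rule out a graded ideal whose even part is $e\bR G$. The paper does this by contradiction: if $\sB=h\sB\oplus(1-h)\sB$ for a nontrivial central idempotent $h$, the graded map $\sA\to h\sB_{+}$ cannot be injective by a dimension count, which forces $h=e$; this is incompatible with $h\vw(1-h)$ being simultaneously zero and equal to $e\vw\neq 0$. You need this argument (or, equivalently, an explicit identification of the even parts of the graded simple summands of $\bR\G$ with the Wedderburn blocks of $\bR G$ and their $\xi$-orbits) before the machinery of Section~\ref{b_gsB} may be invoked.

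The rest of your outline is sound and matches the paper: the seven nontrivially graded division algebras of Table~\ref{table_graded} plus the three ungraded cases of~\eqref{atom_unR} yield exactly ten atomic extensions; $\sC=f\bC G\cong\sA\otimes_{\bR}\bC$ and $\sD\cong\sB\sharp_{\iota,-1}C_2\cong M_n(\bF\sharp_{\iota,-1}C_2)$ are computed as you describe (the paper uses the last column of Table~\ref{table_graded} for the latter); and the occurrence of each case is verified on the listed small examples. Tables~\ref{table_block_structure} and~\ref{table_block_induction} then follow mechanically, as you say.
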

\begin{table}
\caption{Possible A-block structures}
\label{table_block_structure}
\begin{center}
\bgroup
\def\arraystretch{1.3}
\begin{tabular}{|c|c|c|c|c|c|c|c|c|c|c|} \hline
& $\bF_a$  & $\bF_b$ & $\bF_d$ & $|\sA^{\vee}|$   & $|\sB^{\vee}|$  & $|\sC^{\vee}|$ & $|\sD^{\vee}|$ & $G\leq \widehat{G}$ & $S_c$ & DL \\ 
\hline 
I  & $\bR$ & $\bR$ & $\bR$  & $1$ & $2$ & $1$& $1$ & $C_1\leq C_2$ &  $\bC_{tr}$ & $RR$\\
II  & $\bR$ & $\bC$ & $\bH$  & $1$ & $1$ & $1$& $1$ & $C_2\leq C_4$ &  $\bC_{sn}$ & $QR$ \\
III  & $\bR$ & $\bR$ & $\bC$  & $2$ & $1$ & $2$&  $1$ & $K_4\leq D_8$ &  $\bC_{+}$ & $CR$ \\
IV & $\bC$ & $\bC$ & $\bC$  & $1$ & $2$ & $2$& $1$ & $C_3\leq C_6$ & $\bC_{\vw}$& $CC2$ \\
V & $\bC$ & $\bR$ & $\bR$  & $1$ & $1$ & $2$&  $2$ & $C_3\leq D_6$ &  $\bC_{\vw}$& $RC$\\
VI & $\bC$ & $\bH$ & $\bH$  & $1$ & $1$ & $2$&  $2$ & $C_4\leq Q_8$ & $\bC_{i}$ & $QC$\\
VII & $\bC$ & $\bC$ & $\bC$  & $2$ & $1$ & $4$&  $2$ & $C_8\leq C_8\rtimes C_2$ & $\bC_{\alpha}$& $CC1$ \\
VIII  & $\bH$ & $\bH$ & $\bH$  & $1$ & $2$ & $1$&  $1$ & $Q_8\leq Q_8\times C_2$ & $\bC^2$& $QQ$ \\
IX  & $\bH$ & $\bC$ & $\bR$  & $1$ & $1$ & $1$&  $1$ & $Q_8\leq Q_8\rtimes C_2$ & $\bC^2$ & $RQ$\\
X  & $\bH$ & $\bH$ & $\bC$  & $2$ & $1$ & $2$& $1$ & $Q_8\times C_2\leq G_{32}^8$ &  $\bC^2$& $CQ$ \\
\hline
\end{tabular}
\egroup
\vskip 3mm
\end{center}
\end{table}
\begin{table}
\caption{Induction and restriction within an A-block}
\label{table_block_induction} 
\begin{center}
\bgroup
\def\arraystretch{1.3}
\begin{tabular}{|c|c|c|c|c|c|c|c|c|} \hline
& $S_a\!\uparrow^{\sB}$  & $S_b\!\downarrow_{\sA}$ & $S_b\!\uparrow^{\sD}$ & $S_d\!\downarrow_{\sB}$   & $S_a\!\uparrow^{\sC}$ & $S_c\!\downarrow_{\sA}$   & $S_c\!\uparrow^{\sD}$  & $S_d\!\downarrow_{\sC}$ \\ 
\hline 
I  & $S_b\oplus S^{\prime}_b$ & $S_a$ & $S_d$  & $S_b\oplus S^{\prime}_b$ & $S_c$ & $2S_a$  & $2S_d$ &  $S_c$ \\
II  & $S_b$ & $2S_a$ & $S_d$  & $2S_b$ & $S_c$ & $2S_a$  & $S_d$ &  $2S_c$ \\
III  & $S_b$ & $S_a\oplus S^{\prime}_a$ & $S_d$  & $2S_b$ & $S_c$ & $2S_a$  & $S_d$ &  $S_c\oplus S^{\prime}_c$ \\
IV  & $S_b\oplus S^{\prime}_b$ & $S_a$ & $S_d$  & $S_b\oplus S^{\prime}_b$ & $S_c\oplus S^{\prime}_c$ & $S_a$  & $S_d$ &  $S_c\oplus S^{\prime}_c$ \\
V & $2S_b$ & $S_a$ & $S_d\oplus S^{\prime}_d$  & $S_b$ & $S_c\oplus S^{\prime}_c$ & $S_a$  & $2S_d$ &  $S_c$ \\
VI  & $S_b$ & $2S_a$ & $S_d\oplus S^{\prime}_d$  & $S_b$ & $S_c\oplus S^{\prime}_c$ & $S_a$  & $S_d$ &  $2S_c$ \\
VII  & $S_b$ & $S_a\oplus S^{\prime}_a$ & $S_d\oplus S^{\prime}_d$  & $S_b$ & $S_c\oplus S^{\prime}_c$ & $S_a$  & $S_d$ &  $S_c\oplus S^{\prime}_c$ \\
VIII  & $S_b\oplus S^{\prime}_b$ & $S_a$ & $S_d$  & $S_b\oplus S^{\prime}_b$ & $2S_c$ & $S_a$  & $S_d$ &  $2S_c$ \\
IX  & $2S_b$ & $S_a$ & $2S_d$  & $S_b$ & $2S_c$ & $S_a$  & $2S_d$ &  $S_c$ \\
X & $S_b$ & $S_a\oplus S^{\prime}_a$ & $2S_d$  & $S_b$ & $2S_c$ & $S_a$  & $S_d$ &  $S_c\oplus S^{\prime}_c$ \\
\hline
\end{tabular}
\egroup
\vskip 3mm
\end{center}
\end{table}
\begin{proof}
  Notice that $\iota (e)=e$,  $\iota (\vw)=-\vw$ and  $\iota (f)=f$.
It follows that $\sB$ is $C_2$-graded with $\sB_{+}=\sA$.   
Moreover, the element $f \vw \in\sB_{-}$ is invertible (cf.  Lemma~\ref{element}) so that
$$
\sB \cong \sA\sharp_{\xi , f\vw^2} C_2 \, .
$$

Let us prove that an A-block $\sB$ is graded simple. Suppose not. Then $\sA$ is not simple either, thus $\sA =M_n(\bF)\oplus M_n(\bF)$. Moreover, $\sB=h\sB\oplus (1-h)\sB$ for a central idempotent $h\neq 1$ and $h\sB$ is a graded simple algebra with $(h\sB)_{-} = h\sB_{-} \neq 0$.

The map $\theta :\sA\rightarrow \sB \rightarrow h\sB$ is a homomorphism of graded algebras. Thus, its image is in $(h\sB)_{+} = h\sB_{+}$, whose dimension is small: $\dim_{\bR}h\sB_{+} < \dim_{\bR} \sB_{+} = \dim_{\bR}\sA$. Hence, $\theta$ is not injective and its kernel is one of the direct summands of $\sA$, without loss of generality, $\xi(e)\sA$. It follows $\xi(e)h=0$ and $e\sA \leq h \sB$ and $e=h$. Similarly,  $\xi(e)\sA \leq (1-h) \sB$ and $\xi(e)=1-h$. This is a contradiction with the existence of non-zero element $h\vw \in\sB$:
$$
(h\vw)(1-h) = (h-h^2) \vw = 0
\mbox{ but }
(h\vw)(1-h) = e \vw \xi (e) = e \xi^2 (e) \vw = e\vw = h\vw \neq 0.
$$

Since $\sB$ is graded simple with an invertible odd element, we can apply the results of Section~\ref{b_gsB}.
Such analysis of the possible cases constitutes the rest of the proof.  
To show that a particular case is possible, it suffices to find a graded group $G\leq\widehat{G}$ with a homomorphism $\rho:\widehat{G}\rightarrow \sB$ such that $\rho (\widehat{G})$ $\bR$-spans $\sB$ and  $\rho(G)$ $\bR$-spans $\sA$.

The computation of $\sD$ is immediate from the two formulas
\begin{equation} \label{sD_compute}
\sD \cong  \sC \sharp_{\iota,-1} C_2
  \ \mbox{ and } \ 
M^{gr}_{n} (\bF) \sharp_{\iota,-1} C_2  \cong M_{n} (\bF\sharp_{\iota,-1} C_2)
\end{equation}
and the last column of table~\ref{table_graded}. 
The first isomorphism, given by $\varpi\mapsto i$, is similar to~\eqref{crossed_product_RGhat}. It works since $\sD$ is generated by $\sB$ and $i$.
The second isomorphism is given by the obvious formulas
  $(a_{i,j}) \mapsto(a_{i,j})$ and $(a_{i,j})\varpi \mapsto(a_{i,j}\varpi)$.

\subsection*{Unsplit Real A-block} 
It is clear that in this case 
\begin{equation}\label{urb_AC}
\bF_a=\bR, \;
S_a=\bR^n, \;
\sA=M_n (\bR), \;
S_c=\bC^n, \;
\sC=M_n (\bC) \, . 
\end{equation}
Consider a proper graded ideal $I=I_{+} \oplus I_{-}$ of $\sB$.
Since $\sB_{+}=\sA$ is simple, $I_{+}=0$. Since $f\vw$ is invertible and
$f\vw I_{-}\subseteq I_{+}$, $I_{-}=0$.
Hence, $\sB$ is graded simple
and the results of section~\ref{b_gsB}
are applicable.
In particular, we just need to apply formula~\eqref{atom_gr}
to $\bF_{+} \leq \bF$ occupying one of the top two rows 
in table~\ref{table_graded}. This yields the following two possibilities.
Notice that the roman numeral of each possibility corresponds to the row numbers in tables~\ref{table_block_structure} and \ref{table_block_induction}. 
\begin{itemize}
\item[(I)]
$\bF_b=\bR$,  so that
$S_b=\bR^n$,
$\sB=M_n (\bR)\oplus M_n (\bR)$, 
$S_d=\bR^{2n}$,
  $\sD=M_{2n}(\bR)$.
  The trivial representation of $C_1 \leq C_2$ yields such A-block with $n=1$.
\item[(II)]  $\bF_b=\bC$,  so that
$S_b=\bC^n$,
$\sB=M_n (\bC)$, 
$S_d=\bH^{n}$,
$\sD=M_{n}(\bH)$.
The sign representation of $C_2 \leq C_4$ yields such A-block with $n=1$. 
\end{itemize}

\subsection*{Split Real A-block} 
It is clear that in this case 
\begin{equation}\label{srb_AC}
\bF_a=\bR, \;
S_a=\bR^n, \;
\sA=M_n (\bR\oplus \bR), \;
S_c=\bC^n, \;
\sC=M_n (\bC\oplus \bC) \, . 
\end{equation}
Since $\sB$ is graded simple, the only possibility comes from \eqref{atom_unR}, where we replace $k$ with $n$ for consistency:
\begin{itemize}
\item[(III)] $\bF_b=\bR$,  so that
$S_b=\bR^{2n}$,
$\sB=M_{2n} (\bR)$, 
$S_d=\bC^{2n}$,
  $\sD=M_{2n}(\bC)$.
  If $\bC^2$ is the irreducible representation of $D_4$, then $\bC^2\!\downarrow_{K_4}=\bC_{+}\oplus\bC_{-}$
and $\bC_+$ is a representation of $K_4 \leq D_4$, which yields such A-block with $n=1$.
\end{itemize}


\subsection*{Unsplit Complex A-block} 
It is clear that in this case 
\begin{equation}\label{ucb_AC}
\bF_a=\bC, \;
S_a=\bC^n, \;
\sA=M_n (\bC), \;
S_c=\bC^n, \;
\sC=M_n (\bC\oplus\bC) \, . 
\end{equation}
It remains to apply formula~\eqref{atom_gr}
to $\bF_{+} \leq \bF$ occupying one of the three middle rows 
in table~\ref{table_graded}. This yields the following three possibilities.
\begin{itemize}
\item[(IV)]
$\bF_b=\bC$,  so that
$S_b=\bC^n$,
$\sB=M_n (\bC)\oplus M_n (\bC)$, 
$S_d=\bC^{2n}$,
  $\sD=M_{2n}(\bC)$.
The non-trivial representation $\bC_{\vw}$, $\vw=e^{2\pi i/3}$ of $C_3 \leq C_6$ yields such A-block with $n=1$.  
\item[(V)]  $\bF_b=\bR$,  so that
$S_b=\bR^{2n}$,
$\sB=M_{2n} (\bR)$, 
$S_d=\bC^{2n}$,
  $\sD=M_{2n}(\bC)$.
The non-trivial representation $\bC_{\vw}$ of $C_3 \leq D_6$ yields such A-block with $n=1$.    
\item[(VI)]  $\bF_b=\bH$,  so that
$S_b=\bH^n$,
$\sB=M_n (\bH)$, 
$S_d=\bH^{n}$,
$\sD=M_{n}(\bH\oplus\bH)$.
The faithful representation $\bC_{i}$ of $C_4 \leq C_8$ yields such A-block with $n=1$.  
\end{itemize}

\subsection*{Split Complex A-block} 
It is clear that in this case 
\begin{equation}\label{scb_AC}
\bF_a=\bC, \;
S_a=\bC^n, \;
\sA=M_n (\bC\oplus \bC), \;
S_c=\bC^n, \;
\sC=M_n (\bC\oplus \bC\oplus \bC\oplus \bC) \, . 
\end{equation}
Since $\sB$ is graded simple, the only possibility comes from \eqref{atom_unR}, where we replace $k$ with $n$ for consistency:
\begin{itemize}
\item[(VII)] $\bF_b=\bC$,  so that
$S_b=\bC^{2n}$,
$\sB=M_{2n} (\bC)$, 
$S_d=\bC^{2n}$,
  $\sD=M_{2n}(\bC\oplus\bC)$.
  The faithful representation $\bC_{\alpha}$, $\alpha=e^{\pi i/4}$ of $C_8 \leq C_8\rtimes C_2$ yields such A-block with $n=1$.
  The semidirect product is given by the automorphism $\varphi (\vx)=\vx^5$ of $C_8$.
  It is also known as {\em the modular group of order 16} or $G_{16}^{6}$ in the list of small groups.
\end{itemize}

\subsection*{Unsplit Quaternionic A-block} 
It is clear that in this case 
\begin{equation}\label{uqb_AC}
\bF_a=\bH, \;
S_a=\bH^n, \;
\sA=M_n (\bH), \;
S_c=\bC^{2n}, \;
\sC=M_{2n} (\bC) \, . 
\end{equation}
It remains to apply formula~\eqref{atom_gr}
to $\bF_{+} \leq \bF$ occupying one of the two bottom rows 
in table~\ref{table_graded}. This yields the following two possibilites.
\begin{itemize}
\item[(VIII)]
$\bF_b=\bH$,  so that
$S_b=\bH^n$,
$\sB=M_n (\bH)\oplus M_n (\bH)$, 
$S_d=\bH^{2n}$,
  $\sD=M_{2n}(\bH)$.
  The faithful representation $\bC^2$ of $Q_8 \leq Q_8\times C_2$ yields such A-block with $n=1$.
  The direct product is also known as {\em the Hamiltonian group of order 16} or $G_{16}^{12}$ in the list of small groups.
\item[(IX)]
  $\bF_b=\bC$,  so that
$S_b=\bC^{2n}$,
$\sB=M_{2n} (\bC)$, 
$S_d=\bR^{4n}$,
$\sD=M_{4n}(\bR)$.
  The faithful representation $\bC^2$ of $Q_8 \leq Q_8\rtimes C_2$ yields such A-block with $n=1$.
  The semidirect product is also known as {\em the Pauli group} or $G_{16}^{13}$ in the list of small groups.
\end{itemize}

\subsection*{Split Quaternionic A-block} 
It is clear that in this case 
\begin{equation}\label{sqb_AC}
\bF_a=\bH, \;
S_a=\bH^n, \;
\sA=M_n (\bH\oplus \bH), \;
S_c=\bC^{2n}, \;
\sC=M_{2n} (\bC\oplus \bC) \, . 
\end{equation}
Since $\sB$ is graded simple, the only possibility comes from \eqref{atom_unR}, where we replace $k$ with $n$ for consistency:
\begin{itemize}
\item{(X)} $\bF_b=\bH$,  so that
$S_b=\bH^{2n}$,
$\sB=M_{2n} (\bH)$, 
$S_d=\bH^{2n}$,
  $\sD=M_{2n}(\bH\oplus\bH)$.
  The representation $\bC^2$ of $Q_8 \leq Q_8\rtimes G_{32}^8$, coming from the faithful representation of $Q_8$, yields such A-block with $n=1$.
\end{itemize}
\end{proof}

\section{Frobenius-Schur Indicator} \label{FS_indicator}
By $\F_{\C}$ and $\F_{\R}$ we denote the Frobenius-Schur indicators of complex and real representations of $G$ respectively:
\[
\F_{\C} (V) \coloneqq \frac{1}{\left| G \right|}\sum_{\vx \in G} \tr_{\C}(\rho_V (\vx^2)), \
\F_{\R}(W) \coloneqq \frac{1}{\left| G \right|}\sum_{\vx \in G} \tr_{\R}(\rho_W (\vx^2)) \ 
\]
where $V$ is a $\C G$-module and $W$ a $\R G$-module. Similarly $\widehat{\F}_{\C}$ and $\widehat{\F}_{\R}$ will be the Frobenius-Schur indicators of $\G$.

Let $\chi$ be a character of a complex representation $V$ of $G$.
We define the $\emph{Real Frobenius-Schur indicator}$ of $\chi$ or $V$ as
\[
\F (V) = \F(\chi) \coloneqq \frac{1}{\left| G \right|}\sum_{\vz \in \G \setminus G} \chi(\vz^2).
\]
Notice that for the standard Real structure $\F=\F_{\bC}$.
\begin{prop}\label{fsrelation}
For a $\C G $-module $V$, $\F(V) = \frac{1}{2} \widehat{\F}_{\R}(V \res{\R G }{\C G } \ind{\R G }{\R \G}) - \F_{\C}(V)$.
\end{prop}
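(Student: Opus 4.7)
The plan is to unfold $\widehat{\F}_{\R}(X)$ for $X \coloneqq V\res{\R G}{\C G}\ind{\R G}{\R \G}$ by splitting the sum over $\vy \in \G$ into its two cosets. Since $\vy^2 \in G$ for every $\vy \in \G$, both halves of the sum reduce to traces at even elements, which can be computed directly from the induction character formula together with the Matrix Form of Restriction recorded in Section~\ref{compandreal}.

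First I would fix an odd element $\vw$ so that $\G = G \sqcup \vw G$ and use $\{e,\vw\}$ as coset representatives. Writing $V_{\R} \coloneqq V\res{\R G}{\C G}$, the standard Frobenius induction formula then gives, for any $\vh \in G$,
\[
\tr_{\R}(\rho_X(\vh)) = \tr_{\R}(\rho_{V_{\R}}(\vh)) + \tr_{\R}(\rho_{V_{\R}}(\vw^{-1}\vh \vw)),
\]
and the Matrix Form of Restriction yields $\tr_{\R}(\rho_{V_{\R}}(\vh)) = 2\Re\chi(\vh)$, where $\chi$ is the character of $V$.

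Next I would split
\[
\sum_{\vy\in\G}\tr_{\R}(\rho_X(\vy^2)) \;=\; \sum_{\vg\in G}\tr_{\R}(\rho_X(\vg^2)) \;+\; \sum_{\vz\in\G\setminus G}\tr_{\R}(\rho_X(\vz^2))
\]
and apply the two formulas above to each half. The substitution $\vh\mapsto\vw^{-1}\vh\vw$ is a bijection on both $G$ and $\G\setminus G$, so after reindexing the terms coming from conjugation by $\vw$ duplicate the originals. Each half therefore collapses to $4\Re\sum_{\vg\in G}\chi(\vg^2)$ and $4\Re\sum_{\vz\in\G\setminus G}\chi(\vz^2)$ respectively.

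Finally, the inversion $\vy\mapsto\vy^{-1}$ preserves each coset and sends $\chi(\vy^2)$ to its complex conjugate, so both sums are in fact real and $\Re$ can be dropped. Dividing by $|\G|=2|G|$ then gives $\widehat{\F}_{\R}(X) = 2\F_{\C}(V) + 2\F(V)$, which rearranges to the desired identity. The only mildly delicate step is this last reality check allowing one to strip $\Re$; the rest is routine bookkeeping with the induced character.
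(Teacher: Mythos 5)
Your proposal is correct and follows essentially the same route as the paper: the paper computes the real character of $V\res{\R G}{\C G}\ind{\R G}{\R \G}$ as $\chi + \overline{\chi} + \vw\cdot\chi + \vw\cdot\overline{\chi}$ and uses the same two reindexings (conjugation by $\vw$ and inversion $\vy\mapsto\vy^{-1}$) to reduce $\widehat{\F}_{\R}$ to $\frac{4}{|\G|}\sum_{\vy\in\G}\chi(\vy^2)$ before splitting into cosets. Your pointwise use of the induced-character formula and the identity $\tr_{\R}=2\Re\chi$ is just a term-by-term unpacking of that same calculation.
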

\begin{proof}
Notice that $\sum_{\vg} \chi(\vg^2) = \sum_{\vg} \chi(\vw \vg^2 \vw^{-1}) = \sum_{\vg} \overline{\chi(\vg^2)}$, and, hence, 
\[
\widehat{\F}_{\R}(V \res{\R G }{\C G } \ind{\R G }{\R \G}) = \frac{4}{\left| \G \right|} \sum_{\vg \in \G} \chi(\vg^2),
\]
because the character of $V \res{\R G }{\C G } \ind{\R G }{\R \G}$ is  $\chi + \overline{\chi} + \vw \cdot \chi + \vw \cdot \overline{\chi}$. Therefore
\begin{align*}
\F(V) &= \frac{2}{\left| \G \right|} \sum_{\vg \in \G} \chi(\vg^2) - \frac{1}{\left| G \right|}\sum_{\vg \in G} \chi(\vg^2) = \frac{1}{2} \widehat{\F}_{\R}(V \res{\R G }{\bC G} \ind{\R G}{\R \G}) - \F_{\C}(V). \qedhere
\end{align*}
\end{proof}

\begin{thm} (Bargmann-Frobenius-Schur Criterion) \label{FS}
  Let $\chi$ be the character of a simple $\bC G$-module $V$.
  If $W$ is a simple $\CGG$-module in the same A-block as $V$, then
\begin{align*}
\F(\chi) =
\left\{
		\begin{array}{cl}
			1 & \mbox{if } W \mbox{ is of type } \bR, \\
			0 & \mbox{if } W \mbox{ is of type } \bC,  \\
			-1 & \mbox{if } W \mbox{ is of type } \bH.
		\end{array}
	\right.
\end{align*}
\end{thm}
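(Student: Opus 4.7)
The plan is to express $\F(\chi)$ as the trace of the involution $\tau$ from Corollary~\ref{bildecomp} on the $G$-invariant subspace of $(V \otimes \vw \cdot V)^*$, read off its value via Theorem~\ref{biltypes}, and then identify the three cases with the type of $W$ using the restriction tables in Theorem~\ref{possibleblocks}.

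First, fix a basis of $V$ and compute directly the trace of $\tau \circ \rho(\vg)$ on $(V \otimes \vw \cdot V)^*$ by reading off the diagonal entries of the matrices of $\tau$ and $\rho(\vg)$. Using that $\chi$ is a class function, the computation collapses to
\[
\tr\bigl(\tau \circ \rho(\vg)\bigr) = \chi\bigl((\vw \vg^{-1})^2\bigr).
\]
Averaging over $G$ and substituting $\vg \to \vg^{-1}$ yields
\[
\F(\chi) = \frac{1}{|G|}\sum_{\vg \in G} \tr(\tau \circ \rho(\vg)) = \tr(\tau \circ P),
\]
where $P = \frac{1}{|G|}\sum_{\vg \in G}\rho(\vg)$ projects onto the $G$-invariants.

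Next, verify that $\tau$ preserves the subspace $((V \otimes \vw \cdot V)^*)^G$: for a $G$-invariant $f$, applying its invariance under $\vk = \vw \vh \vw^{-1}\in G$ shows that $\tau(f)$ is again invariant under the arbitrary $\vh \in G$. Consequently,
\[
\tr(\tau \circ P) = \tr(P \tau P) = \tr\bigl(\tau|_{((V \otimes \vw \cdot V)^*)^G}\bigr).
\]
Corollary~\ref{bildecomp} then gives the trichotomy $0, +1, -1$ in cases (i), (ii), (iii) of Theorem~\ref{biltypes} respectively, since the invariant subspace is zero-dimensional in case (i), a one-dimensional $+1$ eigenspace of $\tau$ in case (ii), and a one-dimensional $-1$ eigenspace in case (iii).

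To link these three cases with the type of $W$, consult the restriction column of Table~\ref{table_block_induction} in Theorem~\ref{possibleblocks}. For A-blocks of type $\bR$ (rows I, V, IX), one has $S_d\!\downarrow_{\sC} = S_c$, so $V$ is Realisable --- case (ii). For type $\bH$ blocks (rows II, VI, VIII), $S_d\!\downarrow_{\sC} = 2 S_c$, so $V \cong \vw \cdot V^*$ but $V$ is not Realisable --- case (iii). For type $\bC$ blocks (rows III, IV, VII, X), $S_d\!\downarrow_{\sC} = S_c \oplus S_c'$ with $S_c' \cong \vw \cdot V^* \not\cong S_c$ --- case (i). The principal difficulty is that $\tau$ is \emph{not} $G$-equivariant on the whole of $(V \otimes \vw \cdot V)^*$, so a direct character-theoretic averaging is unavailable; this is circumvented by observing that $\tr(\tau \circ P)$ records only the contribution of the invariant subspace, on which $\tau$ does act.
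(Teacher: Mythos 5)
Your proof is correct, and it takes a genuinely different route from the paper's. The paper proves Theorem~\ref{FS} by combining Proposition~\ref{fsrelation}, which expresses $\F(V)$ through the classical indicators $\widehat{\F}_{\R}$ and $\F_{\C}$, with a case-by-case evaluation of both quantities over the ten A-block structures of Theorem~\ref{possibleblocks}; the structural content is thus outsourced to the classical Frobenius--Schur criteria for $\R \G$ and $\C G$. You instead adapt the classical bilinear-form argument directly, and the key steps check out: $\tr(\tau\circ\rho(\vg)) = \chi((\vg^{-1}\vw)^2)$ holds because the product $\vg^{-1}\cdot\vw\vg^{-1}\vw^{-1}\cdot\vw^2$ stays inside $G$ (so no antilinearity intrudes), the map $\vg\mapsto\vg^{-1}\vw$ bijects $G$ onto $\G\setminus G$, and $\tau$ preserves the invariant subspace (your substitution $\vk=\vw\vh\vw^{-1}$ is exactly the right one), whence $\F(\chi)=\tr\bigl(\tau|_{((V\otimes\vw\cdot V)^*)^G}\bigr)$ and Theorem~\ref{biltypes} with Corollary~\ref{bildecomp} delivers the values $+1$, $-1$, $0$. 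This is noteworthy because the introduction asserts that the standard classical proof does not generalise for lack of a symmetric/alternating decomposition; your argument shows the obstruction is only global --- one never needs $\tau$ to be a $G$-equivariant involution on all of $(V\otimes\vw\cdot V)^*$, only on the at most one-dimensional invariant subspace, where $\tau^2=\mathrm{id}$ follows from invariance under $\vw^2\in G$. What you still share with the paper is the final matching of Realisability with the type of $W$ via Table~\ref{table_block_induction} (equivalently the relevant rows of Theorem~\ref{table}, whose proof does not rely on Theorem~\ref{FS}, so there is no circularity). Your approach buys a conceptual, essentially computation-free derivation of the trichotomy; the paper's buys the full ten-row table of indicator values as a by-product.
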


\begin{proof}
Using the above, $\F$ returns these values for the 10 cases described in \ref{possibleblocks}, as shown in the following table. The type of $W$ is the division algebra $\bF_d$.

\begin{center}
\begin{tabular}{|c|c|c|c|c|c|c|c|c|c|c|}
\hline
& I  & II &  III &  IV &  V &  VI &  VII &  VIII &  IX &  X  \\ 
\hline 
$\bF_d$  & $\bR$ & $\bH$ & $\bC$ & $\bC$ & $\bR$ & $\bH$ & $\bC$ & $\bH$ & $\bR$ & $\bC$ \\
$\widehat{\F}_{\R}(V \res{}{} \ind{}{})$  & $4$ & $0$ & $2$ & $0$ & $2$ & $-2$ & $0$ & $-4$ & $0$ & $-2$ \\
$\F_{\C}(V)$ & $1$ & $1$ & $1$ & $0$ & $0$ & $0$ & $0$ & $-1$ & $-1$ & $-1$ \\
$\F(V)$ & $1$ & $-1$ & $0$ & $0$ & $1$ & $-1$ & $0$ & $-1$ & $1$ & $0$ \\
\hline
\end{tabular}
\end{center}
\end{proof}

Although this theorem generalises the classical result to the Real setting, the original classical result is part of the proof. The proof also suggests that {\em the A-type} of $V$ should be the structure of the corresponding A-block. Thus, the A-type should be a roman numeral, I to X.
\begin{cor}
  The pair of Frobenius-Schur indicators $(\F_{\bC}(V),\F (V))$ nearly distinguish the A-type of $V$: it distinguishes eight types and a group of two remaining types (IV and VII). 
\end{cor}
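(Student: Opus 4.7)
The plan is to simply read off the pairs $(\F_{\C}(V), \F(V))$ from the summary table constructed in the proof of Theorem~\ref{FS}. That table already computes both indicators case by case for the ten A-types, so no new calculation is required; the corollary is purely combinatorial in nature once Theorem~\ref{FS} is in hand.

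Concretely, I would list the ten pairs in order: type I gives $(1,1)$, II gives $(1,-1)$, III gives $(1,0)$, IV gives $(0,0)$, V gives $(0,1)$, VI gives $(0,-1)$, VII gives $(0,0)$, VIII gives $(-1,-1)$, IX gives $(-1,1)$, X gives $(-1,0)$. One then observes by direct inspection that types IV and VII produce the identical pair $(0,0)$, whereas every other pair in the list occurs exactly once. This establishes that the map $V \mapsto (\F_{\C}(V), \F(V))$ is injective on the set $\{\mathrm{I}, \mathrm{II}, \mathrm{III}, \mathrm{V}, \mathrm{VI}, \mathrm{VIII}, \mathrm{IX}, \mathrm{X}\}$ of eight A-types, but fails to separate IV from VII.

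There is no real obstacle here; the only thing to be slightly careful about is that the values in the table of Theorem~\ref{FS} are intrinsic invariants of the A-block (the proof derives them from the structural data $\bF_a, \bF_b, \bF_d$ of the A-block via Proposition~\ref{fsrelation} and the classical computation of $\F_{\C}$), so the pair $(\F_{\C}(V), \F(V))$ genuinely depends only on the A-type of $V$, not on any incidental choice. Because of this, an equivalent and perhaps cleaner way to present the corollary is as a short table of the ten pairs, followed by the remark that only the rows labelled IV and VII coincide. I would choose that presentation for clarity.
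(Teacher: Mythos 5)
Your proposal is correct and matches the paper's (implicit) argument exactly: the corollary is stated immediately after the table in the proof of Theorem~\ref{FS}, and the intended justification is precisely to read off the ten pairs $(\F_{\C}(V),\F(V))$ from that table and observe that only IV and VII coincide at $(0,0)$. Your list of pairs agrees with the table, and your remark that the pair depends only on the A-block structure is the right sanity check.
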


\section{Properties of A-Characters}\label{realchars}
\subsection{Homomorphisms and Inner Products} 
Let $W$, $W_1$, $W_2$ be A-representations.
The invariants $W^{\G} = \{w \in W \mid \vg w = w$ for all $\vg \in \G\}$ do not generally form an A-subrepresentation, only an $\R \G$-submodule of dimension
$$
\dim_{\R}W^{\G} = \frac{1}{\left| \G \right|}\sum_{\vg \in \G} \psi(\vg) = \langle \psi , 1 \rangle_{\G} \, ,
$$
where $\psi$ is the real character of $W$. By definition, $\Hom_{\C}(W_1,W_2)^{\G} = \Hom_A(W_1,W_2)$.

\begin{thm}\label{dim_hom}
If $W_1$, $W_2$ are A-representations with A-characters $\chi_1$, $\chi_2$, then
\[
\dim_{\R}\Hom_A(W_1,W_2) = \langle \chi_1 , \chi_2 \rangle,
\]
where 
$\langle \cdot , \cdot \rangle$ is the inner product of class functions on $G$.
\end{thm}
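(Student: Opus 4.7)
The plan is to identify $\Hom_A(W_1,W_2)$ with the $\widehat{G}$-invariants of $\Hom_{\bC}(W_1,W_2)$ viewed as a real $\bR\widehat{G}$-module, then evaluate its dimension via the standard averaging formula already recalled at the top of the subsection, and finally compute the real character using the explicit matrix form of restriction $\res{\bR\widehat{G}}{\bC\!\ast\!\widehat{G}}$ derived earlier.

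\textbf{Step 1.} By definition $\Hom_A(W_1,W_2) = \Hom_{\bC}(W_1,W_2)^{\widehat{G}}$, and this equals the $\bR\widehat{G}$-invariants of the underlying real representation. Hence
\[
\dim_{\bR}\Hom_A(W_1,W_2) \;=\; \frac{1}{|\widehat{G}|}\sum_{\vz\in\widehat{G}}\psi(\vz),
\]
where $\psi$ is the real character of $\Hom_{\bC}(W_1,W_2)\res{\bR\widehat{G}}{\bC\!\ast\!\widehat{G}}$.

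\textbf{Step 2.} Using the matrix description of restriction from Section~\ref{compandreal}, the trace over $\bR$ of the action of an even $\vg$ on an A-representation with A-character $\chi$ is $\chi(\vg) + \overline{\chi(\vg)} = 2\,\mathrm{Re}\,\chi(\vg)$, while on odd elements the restriction acts with trace zero. Applied to $\Hom_{\bC}(W_1,W_2) \cong W_1^{*}\otimes_{\bC} W_2$, whose A-character at $\vg\in G$ is $\overline{\chi_1(\vg)}\chi_2(\vg)$, this gives $\psi(\vz) = 0$ for $\vz\in\widehat{G}\setminus G$ and
\[
\psi(\vg) \;=\; \overline{\chi_1(\vg)}\chi_2(\vg) + \chi_1(\vg)\overline{\chi_2(\vg)} \qquad (\vg\in G).
\]

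\textbf{Step 3.} Substituting into Step 1 and using $|\widehat{G}|=2|G|$,
\[
\dim_{\bR}\Hom_A(W_1,W_2) \;=\; \frac{1}{2|G|}\sum_{\vg\in G}\bigl(\overline{\chi_1(\vg)}\chi_2(\vg) + \chi_1(\vg)\overline{\chi_2(\vg)}\bigr) \;=\; \tfrac{1}{2}\bigl(\langle \chi_2,\chi_1\rangle + \langle \chi_1,\chi_2\rangle\bigr).
\]
It then remains to check that $\langle \chi_1,\chi_2\rangle$ is real and symmetric in $\chi_1,\chi_2$. For any A-character $\chi$ one has $\overline{\chi(\vg)} = \chi(\vw\vg\vw^{-1})$ (a consequence of $\overline{\chi} = \vw\cdot\chi$ noted in the Operations subsection), so the change of variables $\vg\mapsto \vw\vg\vw^{-1}$ shows $\overline{\langle\chi_1,\chi_2\rangle} = \langle\chi_1,\chi_2\rangle$, giving $\langle\chi_1,\chi_2\rangle=\langle\chi_2,\chi_1\rangle\in\bR$, and the claimed formula follows.

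The only step requiring care is Step 2, where one must correctly match the antilinear/linear split of $\Hom_{\bC}(W_1,W_2)$ under $\widehat{G}$ with the matrix formulas of the restriction functor; once that bookkeeping is settled, the rest is a short averaging computation.
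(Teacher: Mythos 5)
Your proof is correct and follows essentially the same route as the paper's: identify $\Hom_A(W_1,W_2)$ with the $\widehat{G}$-invariants of $\Hom_{\bC}(W_1,W_2)$, average the real character, and use the vanishing of the trace on odd elements from the matrix form of restriction to reduce to $\tfrac{1}{2}(\langle\chi_1,\chi_2\rangle+\langle\chi_2,\chi_1\rangle)$. The only (minor) divergence is in the last step: you justify $\langle\chi_1,\chi_2\rangle=\langle\chi_2,\chi_1\rangle$ via the relation $\overline{\chi}=\vw\cdot\chi$ and a change of variables, whereas the paper simply observes that $\langle\chi_1,\chi_2\rangle$ is a non-negative integer because $\chi_1,\chi_2$ are genuine characters of $G$; both justifications are valid.
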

\begin{proof}
  Let $\chi_U$ be the A-character of $U \coloneqq \Hom_{\C}(W_1,W_2)$, $\psi_U$ the character of $U$ as an $\R \G$-module. Then
\begin{align*}
\psi_U(\vg) = \twopartdef{\chi_U(\vg) + \overline{\chi_U(\vg)}}{\vg \in G,}{0}{\vg \not\in G,}
\end{align*}
as in the matrix form of restriction, the trace is 0 outside of $G$. Thus,
\begin{align*}
\dim_{\R}\Hom_A(W_1,W_2) &= \dim_{\R} \Hom_{\C}(W_1,W_2)^{\G}
\\ &= \frac{1}{\left| \G \right|}\sum_{\vg \in \G} \psi_U(\vg)
= \frac{1}{2\left| G \right|}\sum_{\vg \in G} \chi_U(\vg) + \overline{\chi_U(\vg)} \\ &= \frac{1}{2\left| G \right|}\sum_{\vg \in G} \overline{\chi_1(\vg)}\chi_2(\vg) + \chi_1(\vg)\overline{\chi_2(\vg)} \\
&= \frac{1}{2}\left(\langle \chi_1 , \chi_2 \rangle + \langle \chi_2 , \chi_1 \rangle \right) = \langle \chi_1 , \chi_2 \rangle. 
\end{align*}
Notice that the last equality holds because $\chi_1$ and $\chi_2$ are characters and, hence, $\langle \chi_1 , \chi_2 \rangle$ is a non-negative integer. 
\end{proof}

Let $S_j$, $j=1,\ldots, n\;$ be a complete set of non-isomorphic irreducible $A$-repre{\-}sen{\-}tations, $m_j = \dim_{\R}\End_A(S_j)$, $\chi_j = \chi_{S_j}$.
By Corollary~\ref{Achar_determines}, the A-characters $S_j$ are distinct but we can say more now.
Consider $\CGG \cong \oplus_j S_j^{b_j}$ as a left module over itself, with {\em the regular A-character} $\chi_{R}$. 
\begin{cor}\label{innerprodprop}
Let $W = \oplus_j S_j^{a_j}$ be a decomposition of an A-representation $W$ into irreducible A-subrepresentations. The following statements hold.
\begin{enumerate}[label=(\roman*)]
\item
$\chi_j$, $j=1,\ldots, n$ are orthogonal with respect to $\langle\cdot,\cdot\rangle$,   
\item
$\langle \chi_{j} , \chi_W \rangle = a_j m_j$,
\item
$\langle \chi_W , \chi_W \rangle = \sum_j a_j^2m_j$,
\item
$\langle \chi_{j} , \chi_{R} \rangle = 2 \dim_{\C}S_j$,
\item
$b_j = \frac{2 \dim_{\C}S_j}{m_j}$,
\item
$\left| G \right| = \sum_j \frac{1}{m_j} (\dim_{\C}S_j)^2$.
\end{enumerate}
\end{cor}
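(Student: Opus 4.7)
The plan is to derive all six statements from Theorem~\ref{dim_hom}, which converts every inner product $\langle \chi_{W_1},\chi_{W_2}\rangle$ into the real dimension of $\Hom_A(W_1,W_2)$. The only additional inputs I expect to need are (a) the Schur-type identity $\Hom_A(S_j,S_k) = 0$ for $j\neq k$ together with $\End_A(S_j)$ being a real division algebra of dimension $m_j$; (b) additivity of A-characters on direct sums, which is immediate from the definition $\chi(\vg) = \tr(\rho(\vg))$; and (c) the standard evaluation-at-$1$ identification $\Hom_{\CGG}(\CGG,S_j) \cong S_j$ of complex vector spaces.

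For (i), non-isomorphism of $S_j$ and $S_k$ gives $\Hom_A(S_j,S_k)=0$ for $j\neq k$, hence $\langle \chi_j,\chi_k\rangle = 0$ by Theorem~\ref{dim_hom}. For (ii), using $\chi_W = \sum_k a_k \chi_k$ and the bilinearity/sesquilinearity of $\langle\cdot,\cdot\rangle$ together with (i) and the fact $\langle \chi_j,\chi_j\rangle = \dim_{\R}\End_A(S_j) = m_j$ gives $\langle \chi_j,\chi_W\rangle = a_j m_j$. Statement (iii) follows by expanding $\langle \chi_W,\chi_W\rangle = \sum_{j,k} a_j a_k \langle \chi_j,\chi_k\rangle$ and again applying (i).

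For (iv), I compute $\dim_{\R}\Hom_A(\CGG,S_j)$ directly: the evaluation map $f \mapsto f(1)$ is a $\C$-linear bijection $\Hom_{\CGG}(\CGG,S_j) \xrightarrow{\cong} S_j$, so the real dimension equals $2\dim_{\C}S_j$. Since the inner product of A-characters is symmetric (both $\langle \chi_R,\chi_j\rangle$ and $\langle \chi_j,\chi_R\rangle$ equal this same real dimension, as noted in the proof of Theorem~\ref{dim_hom}), Theorem~\ref{dim_hom} applied to the pair $(S_j,\CGG)$ yields (iv). Statement (v) is then immediate by combining (ii) applied with $W=\CGG$ and multiplicities $b_j$ (so $\langle \chi_j,\chi_R\rangle = b_j m_j$) with (iv). Finally, for (vi) I count $\dim_{\C}\CGG$ in two ways: it is $2|G|$ from the basis $\widehat{G}$, and $\sum_j b_j \dim_{\C}S_j$ from the decomposition; substituting $b_j$ from (v) and dividing by $2$ gives the identity.

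I do not anticipate a substantive obstacle, since each part is essentially a one-line consequence of Theorem~\ref{dim_hom}. The only subtlety is the symmetry step used in (iv): one must remember that $\langle \chi_j,\chi_R\rangle$ is automatically real (being a $\Hom$-dimension), so no conjugation issue arises when passing between $\Hom_A(\CGG,S_j)$ and $\Hom_A(S_j,\CGG)$; equivalently, $\CGG$ is known to be self-injective (indeed Frobenius) so both $\Hom$ spaces have the same real dimension anyway.
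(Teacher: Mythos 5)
Your proof is correct and takes exactly the route the paper intends: the corollary is stated without proof as an immediate consequence of Theorem~\ref{dim_hom}, Schur's lemma, and the usual counting for the regular module. The only nitpick is in (iv): $\Hom_{\CGG}(\CGG,S_j)$ is a priori only a real vector space (post-multiplication by $i$ does not preserve $\CGG$-linearity), so the evaluation bijection should be called $\R$-linear rather than $\C$-linear — but since all you use is $\dim_{\R}\Hom_{\CGG}(\CGG,S_j)=\dim_{\R}S_j=2\dim_{\C}S_j$, the argument is unaffected.
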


\subsection{Complexification and Realification Revisited}
The following proposition follows from computation of the $\R$-dimensions
in the Frobenius reciprocity (Proposition~\ref{FR}). By $\chi_{\cdot}$ we denote the corresponding complex or A-character. By $\psi_{\cdot}$ we denote the corresponding real character. We also skip subscripts and superscripts at arrows if they are unambiguous.
\begin{prop}\label{FR1/2}
  Let $V$ be a $\C G $-module, $W$ a $\CGG$-module,  $X$ a $\R \G$-module.
  The following statements hold. 
\begin{enumerate}[label=(\roman*)]
\item
  $\langle \chi_{V\ind{}{}} , \chi_W \rangle = 2 \langle \chi_{V} , \chi_{W\res{\bC G}{}} \rangle$
  and
  $\langle \chi_{V\ind{}{}} , \chi_{V\ind{}{}}  \rangle = 2(\langle \chi_V , \chi_V \rangle + \langle \chi_V , \chi_{\vw \cdot \overline{V}} \rangle)$.
\item
  $\langle \chi_{X\ind{}{}} , \chi_W \rangle = \langle \psi_{X} , \psi_{W\res{\bR G}{}} \rangle$ and $\langle \chi_{X\ind{}{}} , \chi_{X\ind{}{}}  \rangle = \langle \psi_{X} , \psi_{X} \rangle + \langle \psi_{X} , \psi_{X \otimes \pi} \rangle$.
\end{enumerate}
\end{prop}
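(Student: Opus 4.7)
The plan is to apply Frobenius reciprocity (Proposition~\ref{FR}) to produce natural isomorphisms of Hom spaces, and then to compute the real dimension of each side. The left-hand sides will be converted to A-character inner products using Theorem~\ref{dim_hom}, and the right-hand sides will be converted using the standard identifications $\dim_{\R}\Hom_{\bC G}(V_1,V_2) = 2\langle \chi_{V_1},\chi_{V_2}\rangle$ (since this Hom space is a complex vector space whose $\bC$-dimension is $\langle\chi_{V_1},\chi_{V_2}\rangle$) and $\dim_{\R}\Hom_{\R \G}(X_1,X_2) = \langle \psi_{X_1},\psi_{X_2}\rangle$ (obtained by complexifying and applying the usual complex character orthogonality, noting that $\overline{\psi_{X_i}}=\psi_{X_i}$).

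For (i), first I would apply Frobenius reciprocity to $\bC G \leq \CGG$:
\[
\Hom_A\bigl(V\ind{\bC G}{\CGG},\,W\bigr) \cong \Hom_{\bC G}\bigl(V,\,W\res{\bC G}{\CGG}\bigr).
\]
Taking $\R$-dimensions yields $\langle \chi_{V\ind{}{}},\chi_W\rangle = 2\langle \chi_V,\chi_{W\res{}{}}\rangle$. For the second identity I specialise $W = V\ind{\bC G}{\CGG}$ and invoke Proposition~\ref{pr:cresind}, which gives $V\ind{}{}\res{}{} \cong V \oplus \vw \cdot \overline{V}$ as $\bC G$-modules. Substituting $\chi_{V\ind{}{}\res{}{}} = \chi_V + \chi_{\vw \cdot \overline{V}}$ into the first identity produces the second.

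For (ii), I repeat the argument using Frobenius reciprocity applied to $\R \G \leq \CGG$:
\[
\Hom_A\bigl(X\ind{\R \G}{\CGG},\,W\bigr) \cong \Hom_{\R \G}\bigl(X,\,W\res{\R \G}{\CGG}\bigr).
\]
Taking $\R$-dimensions now yields $\langle \chi_{X\ind{}{}},\chi_W\rangle = \langle \psi_X,\psi_{W\res{}{}}\rangle$ (no factor of $2$ here, since the right side is a genuine real Hom space). The second identity follows by taking $W = X\ind{\R \G}{\CGG}$ and using the earlier formula $X\ind{}{}\res{}{} \cong X \oplus (X \otimes \pi)$ to split the character.

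The steps are essentially bookkeeping once Theorem~\ref{dim_hom} is in hand, so the only real subtlety is keeping track of the factors of $2$ coming from the $\R$- versus $\bC$-dimension of a complex Hom space, and verifying that $\dim_{\R}\Hom_{\R \G}(X_1,X_2) = \langle \psi_{X_1},\psi_{X_2}\rangle$; neither presents a genuine obstacle, and no further structural input is required.
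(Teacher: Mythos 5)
Your proposal is correct and follows exactly the route the paper intends: the paper's entire "proof" is the one-line remark that the proposition "follows from computation of the $\R$-dimensions in the Frobenius reciprocity (Proposition~\ref{FR})," and you have simply supplied the bookkeeping — Theorem~\ref{dim_hom} on the A-side, the factor of $2$ from $\dim_{\R}=2\dim_{\C}$ of a complex Hom space in (i), the identity $\dim_{\R}\Hom_{\R\G}(X_1,X_2)=\langle\psi_{X_1},\psi_{X_2}\rangle$ in (ii), and Propositions~\ref{pr:cresind} and its $\R\G$ analogue to split the restricted characters.
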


The next theorem is an important clarification of the structure of an A-block, fusing together Theorems \ref{biltypes} and \ref{possibleblocks}. The distinction between the three types and the Realisability in each case was known to Neeb and \'{O}lafsson \cite[Thm. 2.11]{NEEB}.
This clarifies the aspects of an A-block, controlled by the Real Frobenius-Schur indicator. It also generalises a well-known result in the classical real theory (cf. \cite{RUM}).
\begin{thm}\label{table}
Let $W$ be an irreducible A-representation, $V$ an irreducible subrepresentation of $W\res{}{}= W\res{\C G }{\CGG}$. Let $V \ind{}{} = V\ind{\C G }{\CGG}$, $\vw$ a fixed odd element. Then $W$ and $V$ are as in one of the columns of table~\ref{table_classify}.
\begin{table}
\caption{The three types of A-representation}
\label{table_classify}
\begin{center}
\begin{tabular}{ c|c|c|c } 
 $\End_A(W)$ & $\R$ & $\C$ & $\mathbb{H}$ \\ 
 \hhline{=|=|=|=}
  $\End_{\C G }(W)$ & $\C$ & $\C \times \C$ & $M_2(\C)$ \\ 
 \hline
 $W\res{}{}$ & $V$ & $V \oplus \vw \cdot \overline{V}$ & $V \oplus V$ \\ 
 \hline
 $V\ind{}{}$ & $W \oplus W$ & $W$ & $W$ \\ 
 \hline
 $\dim_{\C}W$ & $n$ & $2n$ & $2n$ \\ 
 \hline
 $\dim_{\C}V$ & $n$ & $n$ & $n$ \\ 
 \hline
 $V \cong \vw \cdot \overline{V}$? & Yes & No & Yes \\ 
 \hline
 $V$ Realisable? & Yes & No & No \\ 
 \hline
 $\exists \: \vw$-invariant bilinear form? & Yes ($\vw$-sym.) & No & Yes ($\vw$-alt.) \\ 
 \hline
 $\F(V)$ & 1 & 0 & $-1$ \\
\end{tabular}
\end{center}
\end{table}
\end{thm}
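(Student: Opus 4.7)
The plan is to deduce every row of Table~\ref{table_classify} from Dyson's Theorem (Theorem~\ref{possibleblocks}), Theorem~\ref{biltypes}, the Bargmann-Frobenius-Schur Criterion (Theorem~\ref{FS}), and Proposition~\ref{pr:cresind}.

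First, since $W$ is irreducible, Schur's lemma gives that $\End_A(W)$ is a finite-dimensional real division algebra, hence isomorphic to $\bR$, $\bC$ or $\bH$; this establishes the three columns. Now $W$ corresponds to a simple module $S_d$ of a unique A-block $\sD$, and $V$ to a simple module $S_c$ of $\sC$; by definition $\End_A(W)=\bF_d$. Grouping the ten block types of Table~\ref{table_block_structure} according to the value of $\bF_d$ and consulting Table~\ref{table_block_induction}, I would verify that $W\res{}{}$ is $V$, $V\oplus V'$ with $V'\not\cong V$, or $V\oplus V$ precisely when $\bF_d$ is $\bR$ (rows I, V, IX), $\bC$ (rows III, IV, VII, X), or $\bH$ (rows II, VI, VIII) respectively, and that $V\ind{}{}$ is $W\oplus W$ in the first case and $W$ in the other two. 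The two dimension rows are immediate consequences, and the entries for $\End_{\bC G}(W)$ follow from Schur's lemma applied to these decompositions.

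For the remaining rows I would combine the previous step with Proposition~\ref{pr:cresind}, which gives $V\ind{}{}\res{}{}\cong V\oplus \vw\cdot\overline V$. Comparing with $V\ind{}{}$ in each type yields $\vw\cdot\overline V\cong V$ in types $\bR$ and $\bH$, and $\vw\cdot\overline V\not\cong V$ in type $\bC$; since $V^*\cong\overline V$ as $\bC G$-modules, this is exactly the dichotomy in Theorem~\ref{biltypes}. Realisability of $V$ asks whether some simple A-representation has restriction isomorphic to $V$ alone, and the restriction column shows this occurs only in type $\bR$ (in which case $W\res{}{}=V$ witnesses Realisability directly; in the other two types no simple A-module in the block restricts to $V$). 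Feeding these two answers into Theorem~\ref{biltypes} produces the row on $\vw$-invariant bilinear forms, while Theorem~\ref{FS} directly supplies $\F(V)=1,0,-1$ in the three types.

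The conceptual heavy lifting is already carried out in the proof of Dyson's Theorem; the present result is essentially a rearrangement of its conclusions grouped by the single invariant $\End_A(W)$. The main practical obstacle is the bookkeeping in the second paragraph, confirming that each of the ten block types slots correctly into exactly one of the three columns and that the induction and restriction entries are consistent with Propositions~\ref{pr:cresind} and~\ref{pr:cindres}; but this is pure pattern-matching against Tables~\ref{table_block_structure} and~\ref{table_block_induction}.
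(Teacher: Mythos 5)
Your proposal is correct, but it takes a genuinely different route from the paper's. The paper does not route the first seven rows through Dyson's Theorem at all: it computes $\langle\chi_W,\chi_W\rangle=\dim_{\bR}\End_A(W)\in\{1,2,4\}$ via Theorem~\ref{dim_hom}, reads off the shape of $W\res{}{}$ directly (with a short argument in the $\bH$ case that $\bH$ cannot embed into the commutative algebra $\bC^4$, forcing $\End_{\bC G}(W\res{}{})\cong M_2(\bC)$ and hence $W\res{}{}\cong V\oplus V$), and then plays Propositions~\ref{pr:cresind} and~\ref{pr:cindres} against each other to obtain the $V\ind{}{}$ and $V\cong\vw\cdot\overline V$ rows; non-Realisability in the $\bC$ and $\bH$ columns is then immediate, since $V=U\res{}{}$ would give $W\cong V\ind{}{}\cong U\res{}{}\ind{}{}\cong U\oplus U$, contradicting simplicity. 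Your route instead sorts the ten block types of Tables~\ref{table_block_structure} and~\ref{table_block_induction} by $\bF_d$; your grouping (I, V, IX for $\bR$; III, IV, VII, X for $\bC$; II, VI, VIII for $\bH$) is correct and the induction/restriction entries do match the three columns. What your approach buys is that the case analysis becomes pure bookkeeping; what it costs is dependence on the full strength of Theorem~\ref{possibleblocks}, whereas the paper's argument shows the trichotomy falls out of elementary A-character theory alone, with the block classification needed only for the final row via Theorem~\ref{FS}. Two small points to tighten: your parenthetical ``no simple A-module in the block restricts to $V$'' should read ``restricts to $V$ alone''; and you should record the (easy) observation that if a simple $V$ is Realisable then a witnessing $W$ can be taken simple, since a proper nonzero A-subrepresentation of $W$ would restrict to a proper nonzero $\bC G$-submodule of $V$ --- this is what licenses checking only the simple modules of the block.
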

\begin{proof}
If $\End_A(W) \cong \R$, then $\langle \chi_W , \chi_W \rangle = 1$, so $W \res{}{}$ is simple, thus $W \res{}{} = V$ and $V$ is Realisable. By \ref{pr:cindres}, we have that $W \oplus W \cong W \res{}{}\ind{}{} \cong V \ind{}{}$. Furthermore, $W \res{}{} \oplus W \res{}{} \cong V \ind{}{} \res{}{} \cong V \oplus \vw \cdot \overline{V}$ by \ref{pr:cresind}, hence $V \cong \vw \cdot \overline{V}$.

If $\End_A(W) \cong \C$, then $\langle \chi_W , \chi_W \rangle = 2$. So $W\res{}{} \cong V \oplus U$, where $U$ is simple, and $W \oplus W \cong W \res{}{} \ind{}{} \cong V \ind{}{} \oplus U\ind{}{}$. $W$ is simple, so $W \cong V \ind{}{}$. Therefore, $W \res{}{} \cong V \ind{}{} \res{}{} \cong V \oplus \vw \cdot \overline{V}$, and $W \res{}{}$ is the sum of two simple modules, thus $V \not\cong \vw \cdot \overline{V}$.

If $\End_A(W) \cong \mathbb{H}$, then $\langle \chi_W , \chi_W \rangle = 4$. If $W \res{}{}$ was the sum of four non-isomorphic simple modules, then $\mathbb{H} \cong \End_A(W) \leq \End_{\C G }W \res{}{} \cong \C^4$, a contradiction:
the non-commutative algebra $\mathbb{H}$ cannot be a subalgebra of the commutative algebra  $\C^4$.
Thus $\End_{\C G }W \res{}{} \cong M_2(\C)$, so $W \res{}{} \cong V \oplus V$. Therefore, $V \oplus V \cong W \res{}{} \cong \res{}{}V \ind{}{} \cong V \oplus \vw \cdot \overline{V}$, so $V \cong \vw \cdot \overline{V}$, and $W \oplus W \cong W \res{}{}\ind{}{} \cong V \ind{}{} \oplus V \ind{}{}$, thus $W \cong V \ind{}{}$.

If the last two columns were Realisable, $V = U\res{}{}$ for some $U$. Then $W \cong V \ind{}{} \cong U \res{}{} \ind{}{}= U \oplus U$, which cannot happen as $W$ is simple. The final two rows follow from \ref{biltypes} and \ref{FS} respectively.
\end{proof}

The next corollary sheds light on the origin of irreducible A-representations.
\begin{cor}\label{irr_CGG_mod}
  Every simple $\CGG$-module $W$ occurs in $V \ind{}{}$ for some simple $\C G $-module $V$.
  This $V$ is unique if and only if $\End_{\R}W \neq \C$.
If $\End_{\R}W = \C$, then $W$ occurs twice as $V \ind{}{}$ and $(\vw \cdot \overline{V})\ind{}{}$.
\end{cor}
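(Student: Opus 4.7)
The plan is to apply Frobenius reciprocity together with the structural description of an A-block from Theorem~\ref{table}, which pins down the simple constituents of $W\res{\C G}{\CGG}$.

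First I would establish existence. By Proposition~\ref{pr:cindres}, $W\res{}{}\ind{}{} \cong W \oplus W$, and both sides are semisimple $\CGG$-modules. Writing $W\res{}{} \cong \bigoplus_i V_i$ as a decomposition into simple $\bC G$-modules, we get $\bigoplus_i V_i\ind{}{} \cong W \oplus W$, so $W$ must appear as a summand of at least one $V_i\ind{}{}$. This settles existence.

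Next I would analyse uniqueness via Frobenius reciprocity (Proposition~\ref{FR}). For any simple $\bC G$-module $V$,
\[
\Hom_{\CGG}(V\ind{}{}, W) \cong \Hom_{\C G}(V, W\res{}{}),
\]
and since both $V$ and $W$ are simple, $W$ occurs as a summand of $V\ind{}{}$ if and only if $V$ is isomorphic to a simple constituent of $W\res{}{}$. Thus the isomorphism classes of such $V$ correspond bijectively to those of the simple summands of $W\res{}{}$.

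The corollary now drops out by reading the ``$W\res{}{}$'' row of Theorem~\ref{table}. If $\End_A(W)\cong\R$, then $W\res{}{} = V$; if $\End_A(W)\cong\bH$, then $W\res{}{} = V \oplus V$; in both cases there is a unique simple constituent up to isomorphism, so $V$ is unique. If $\End_A(W)\cong\C$, then $W\res{}{} = V \oplus \vw\cdot\overline{V}$ with $V \not\cong \vw\cdot\overline{V}$, so exactly two isomorphism classes of $V$ produce $W$, namely $V$ and $\vw\cdot\overline{V}$ (a dimension check shows that $V\ind{}{}$ and $(\vw\cdot\overline{V})\ind{}{}$ each have $\bC$-dimension equal to that of $W$, forcing each to be isomorphic to $W$). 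I expect no substantive obstacle here: the entire corollary is essentially a direct reformulation of Theorem~\ref{table} once Frobenius reciprocity identifies the candidate $V$'s with the simple summands of $W\res{}{}$.
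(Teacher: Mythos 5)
Your proposal is correct and follows essentially the route the paper intends: the corollary is read off from Theorem~\ref{table}, with Frobenius reciprocity (Proposition~\ref{FR}) plus semisimplicity identifying the simple $V$ with $W$ occurring in $V\ind{}{}$ precisely with the simple constituents of $W\res{}{}$. The paper leaves these details implicit, and your filling-in (including the dimension check showing $(\vw\cdot\overline{V})\ind{}{}\cong W$ rather than $W\oplus W$ in the complex case) is accurate.
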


Corollary~\ref{irr_CGG_mod} allows us to derive the A-character table from the complex character table.
Go through the irreducible characters $\chi$ of $G$.
If $\F(\chi) = 1$, $\chi$ is an irreducible A-character.
If $\F(\chi) = -1$, $2\chi$ is an irreducible A-character.
If $\F(\chi) = 0$, then $\chi + \vw \cdot \overline{\chi}$ is an irreducible A-character.
This yields all irreducible A-characters, once for $\F(\chi) \neq 0$, and twice for $\F(\chi) = 0$.

The table gives a second proof of Theorem~\ref{prop:centredim}, which we find instructive:
\begin{proof}[Second proof of \ref{prop:centredim}]
Let $r,s,t$ be the number of irreducible A-representations of type $\R, \C, \mathbb{H}$ respectively, so $\dim_{\R}Z(\CGG) = r + 2s + t$. Each irreducible A-representation with endomorphism ring $\mathbb{H}$ or $\R$ comes from one irreducible complex representation, and those with endomorphism ring $\C$ come from two. So the number of complex irreducible representations is also equal to $r+2s+t$.
\end{proof}

\subsection{The A-character Table is Square}  This has been proved by Noohi and Young \cite[Cor. 3.12]{LTJ}:
\begin{thm}\label{square}
$\#($Irreducible A-Representations$) = \#($Real Conjugacy Classes$)$.
\end{thm}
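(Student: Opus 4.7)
The plan is to prove the equality by computing $\dim_\C \Phi$ in two ways, where $\Phi$ denotes the space of $\C$-valued Real class functions on $G$. Since Real conjugacy classes partition $G$ and a Real class function is determined by its values on these classes, it is immediate that $\dim_\C \Phi$ equals the number of Real conjugacy classes.

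For the second count, fix an odd element $\vw \in \G \setminus G$ and define the involution $\sigma$ on the space of complex class functions on $G$ by $\sigma(f)(\vg) \coloneqq f(\vw \vg^{-1} \vw^{-1})$. Any two odd elements differ by an element of $G$, and class functions are already invariant under $G$-conjugation, so $\sigma$ is independent of the choice of $\vw$, and $\Phi$ is precisely its $+1$-eigenspace. Using $\chi(\vh^{-1}) = \overline{\chi(\vh)}$, one computes for any irreducible complex character $\chi$ that
\[
\sigma(\chi)(\vg) = \chi((\vw \vg \vw^{-1})^{-1}) = \overline{\chi(\vw \vg \vw^{-1})} = (\vw \cdot \overline{\chi})(\vg),
\]
so $\sigma$ permutes the basis of complex class functions given by irreducible complex characters of $G$ via the rule $\chi \mapsto \vw \cdot \overline{\chi}$.

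The fixed points of this permutation are exactly those $\chi = \chi_V$ for which $V \cong \vw \cdot \overline{V} \cong \vw \cdot V^*$. By Theorem \ref{biltypes}, together with its refinement in Theorem \ref{table}, these are precisely the simple $\C G$-modules lying in an A-block of type $\R$ or $\mathbb{H}$, of which there are $r + t$ in total. The remaining $2s$ simple $\C G$-modules, those in A-blocks of type $\C$, are paired by the involution into $s$ two-cycles $\{\chi, \vw \cdot \overline{\chi}\}$ by the same theorems.

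The $+1$-eigenspace of a permutation acting on a basis has dimension equal to the number of fixed points plus the number of two-cycles, yielding $(r + t) + s = r + s + t$, which is exactly the number of irreducible A-representations. Combining this with the first count finishes the proof. The main obstacle is the identification of fixed points of $\sigma$ with A-blocks of type $\R$ or $\mathbb{H}$; once the structural work in Section \ref{Ch:blocks} and Theorem \ref{biltypes} is in hand, everything else reduces to a routine dimension count.
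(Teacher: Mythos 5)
Your proof is correct, and it takes a genuinely different route from the paper. The paper deduces the theorem from Theorem~\ref{prop:centredim} combined with Lemma~\ref{doubleactionlem}, whose proof invokes Brauer's permutation lemma (Proposition~\ref{braueraction}) for the compatible $\G$-actions on conjugacy classes and on irreducible characters, and then passes through Corollary~\ref{diffcor}. You instead compute $\dim_{\C}\Phi$ twice: once in the basis of indicator functions of Real conjugacy classes, and once by identifying $\Phi$ with the $+1$-eigenspace of the involution $\sigma(f)(\vg)=f(\vw\vg^{-1}\vw^{-1})$ acting on the character basis, where Theorem~\ref{table} identifies the fixed characters as exactly those of non-complex type ($r+t$ of them, in bijection with the irreducible A-representations of type $\R$ or $\mathbb{H}$ via Corollary~\ref{irr_CGG_mod}) and pairs the remaining $2s$ complex-type characters into $s$ two-cycles. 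This is in effect a direct, self-contained proof of the relevant instance of Brauer's lemma --- the trace of $\sigma$ computed in the two bases gives the number of self-paired classes and of self-paired characters respectively --- so your approach bypasses the citation of \cite[Thm. 11.9]{CR1} as well as Theorem~\ref{prop:centredim}, at the cost of relying slightly more heavily on Theorem~\ref{table}. Two cosmetic remarks: the fact you need is $V\cong\vw\cdot\overline{V}$, which matches the condition $V\cong\vw\cdot V^{*}$ of Theorem~\ref{biltypes} only after the standard identification $\overline{V}\cong V^{*}$; and ``lying in an A-block of type $\R$ or $\mathbb{H}$'' is better phrased as ``lying under an irreducible A-representation of type $\R$ or $\mathbb{H}$'', since a single A-block can contain two non-isomorphic simple $\CGG$-modules (types V--VII of Table~\ref{table_block_structure}). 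Neither remark affects the count, and the argument stands.
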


In order to prove this, we make use of a theorem of Brauer. Let $S$ be the set of irreducible complex  characters of $G$, and $\CC$ the set of conjugacy classes of $G$.
\begin{prop}\label{braueraction}\cite[Thm. 11.9]{CR1}
Let $A$ be a group which acts on both $\CC$ and $S$ in such a way that $\alpha(\chi)_{\alpha(C)} = \chi_C$ for all $\chi \in S$, $\alpha \in A$, $C \in \CC$, where $\chi_C = \chi(\vg)$ for $\vg \in \CC$. Then each element of $A$ fixes the same number of irreducible characters of $G$ as conjugacy classes.
\end{prop}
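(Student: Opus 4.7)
The plan is to apply Brauer's permutation lemma (Proposition~\ref{braueraction}) with $A = C_2$ acting on the conjugacy classes and on the irreducible characters of $G$, and then read off the theorem by comparing orbit counts on the two sides.

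Fix an odd element $\vw \in \widehat{G}\setminus G$ and let $C_2 = \{1,\sigma\}$ act on the set $\CC$ of conjugacy classes of $G$ by $\sigma\cdot(\vg)_G = (\vw\vg^{-1}\vw^{-1})_G$, and on the set $S$ of irreducible complex characters of $G$ by $(\sigma\cdot\chi)(\vh) = \overline{\chi(\vw\vh\vw^{-1})}$, i.e.\ $\sigma$ sends the character of $V$ to the character of $\vw\cdot\overline{V}$. That these are involutions is immediate from $\vw^2\in G$ together with the fact that irreducible characters are class functions; both actions are also independent of the choice of $\vw$, since replacing $\vw$ by $\vg_0\vw$ amounts to an additional conjugation by $\vg_0\in G$, invisible on $\CC$ and on class functions.

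Next I would identify the two orbit sets. The orbits of $\sigma$ on $\CC$ are exactly the Real conjugacy classes of $G\leq\widehat{G}$: this is a direct translation of the (B1)/(B2) dichotomy of Section~\ref{Real_CC}, since type B2 classes are $\sigma$-fixed and type B1 classes form two-element $\sigma$-orbits. The orbits of $\sigma$ on $S$ are in bijection with the irreducible A-representations by Corollary~\ref{irr_CGG_mod}: a $\sigma$-fixed character is exactly one with $\F(\chi)\neq 0$ and contributes a unique irreducible A-representation, whereas a two-element $\sigma$-orbit $\{\chi,\vw\cdot\overline{\chi}\}$ with $\F(\chi)=0$ contributes the single A-representation with A-character $\chi+\vw\cdot\overline{\chi}$. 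By Burnside's lemma the number of orbits of $C_2$ on a finite set $X$ equals $(|X|+|X^{C_2}|)/2$, and $|\CC|=|S|$ by classical character theory of $G$, so the theorem reduces to the identity $|\CC^{C_2}|=|S^{C_2}|$.

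This identity is exactly what Brauer's lemma yields, provided the compatibility hypothesis $\sigma(\chi)_{\sigma(C)}=\chi_C$ is verified. For $C=(\vh)_G$ this is a one-line check:
\[
\sigma(\chi)(\vw\vh^{-1}\vw^{-1}) = \overline{\chi(\vw^2\vh^{-1}\vw^{-2})} = \overline{\chi(\vh^{-1})} = \chi(\vh),
\]
using $\vw^2\in G$ to eliminate the outer conjugation and the standard identity $\chi(\vh^{-1})=\overline{\chi(\vh)}$ for complex characters of finite groups. The main conceptual step is choosing the two $C_2$-actions so that the orbit sets simultaneously enumerate Real conjugacy classes and irreducible A-representations while satisfying the Brauer compatibility; once this framework is in place, Brauer's lemma and the orbit-counting identity finish the argument.
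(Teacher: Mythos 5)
Your proposal does not prove the statement at hand: the statement \emph{is} Brauer's permutation lemma (the paper states it as a citation to Curtis--Reiner, Thm.~11.9), a general assertion about an arbitrary group $A$ acting compatibly on the set $\CC$ of conjugacy classes and the set $S$ of irreducible characters. You instead \emph{invoke} this proposition as a black box and use it, with $A=C_2$ acting via $\vw$-twisted inversion on $\CC$ and via $\chi\mapsto \vw\cdot\overline{\chi}$ on $S$, to count Real conjugacy classes against irreducible A-representations. That is circular as a proof of the proposition itself; what you have written is essentially the paper's own later argument (Lemma~\ref{doubleactionlem}, Corollary~\ref{diffcor} and the proof of Theorem~\ref{square}), i.e.\ the \emph{application} of the proposition, not the proposition. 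Your verification of the compatibility $\sigma(\chi)_{\sigma(C)}=\chi_C$ and the orbit-counting via Burnside are fine in that downstream context, but they leave the quoted statement unproved.

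A correct proof is short and uses only the nondegeneracy of the character table. Let $X$ be the square matrix with entries $X_{\chi,C}=\chi_C$; it is invertible over $\C$ because the irreducible characters are linearly independent class functions and $|S|=|\CC|$. Fix $\alpha\in A$ and let $P$, $Q$ be the permutation matrices of $\alpha$ on $S$ and on $\CC$. The hypothesis $\alpha(\chi)_{\alpha(C)}=\chi_C$ says exactly that
\[
P^{\mathsf T} X Q = X,
\]
hence $Q = X^{-1} P X$ (up to the obvious transpose bookkeeping), so $\tr Q=\tr P$. Since the trace of a permutation matrix is its number of fixed points, $\alpha$ fixes as many irreducible characters as conjugacy classes, which is the claim. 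If you then want your $C_2$-orbit analysis, it sits on top of this as in the paper.
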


\begin{lem}\label{doubleactionlem}
$\#($Conjugacy Classes of $G$ with $(\!(\vg)\!) = (\vg)_{G}) = \#($Irreducible A-representations of type $\R$ or $\mathbb{H})$.
\end{lem}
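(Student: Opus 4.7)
The plan is to apply Brauer's permutation lemma (Proposition~\ref{braueraction}) with $A = C_2$ acting on both sides, and then identify the fixed points.

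First, I fix an odd element $\vw \in \widehat G \setminus G$ and define two involutions. On the set $\CC$ of conjugacy classes of $G$, let $\alpha$ send $(\vg)_G$ to $(\vw \vg^{-1} \vw^{-1})_G$; this is independent of the choice of $\vw$ (any other odd element is $\vh\vw$ for some $\vh\in G$) and squares to the identity because $\vw^2\in G$. On the set $S$ of irreducible complex characters of $G$, let $\alpha$ send $\chi_V$ to $\chi_{\vw\cdot\overline{V}}$, which is again an involution of order dividing $2$. The compatibility condition $\alpha(\chi)(\alpha(\vg)) = \chi(\vg)$ is a one-line check:
\[
\chi_{\vw\cdot\overline{V}}(\vw\vg^{-1}\vw^{-1}) = \overline{\chi_V(\vw^{-1}\vw\vg^{-1}\vw^{-1}\vw)} = \overline{\chi_V(\vg^{-1})} = \chi_V(\vg).
\]
Hence Proposition~\ref{braueraction} applies and gives $\#\mathrm{Fix}(\alpha \curvearrowright \CC) = \#\mathrm{Fix}(\alpha \curvearrowright S)$.

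Next I identify the fixed conjugacy classes. A class $(\vg)_G$ is fixed by $\alpha$ precisely when $\vw\vg^{-1}\vw^{-1}$ is $G$-conjugate to $\vg$ for some (equivalently, every) odd $\vw$. Comparing with the dichotomy in Section~\ref{Real_CC}, case B1 gives $(\!(\vg)\!) = (\vg)_G \sqcup (\vw\vg^{-1}\vw^{-1})_G$ as a disjoint union of two $G$-classes, so such $(\vg)_G$ is not fixed; case B2 gives $(\!(\vg)\!) = (\vg)_G$ and is characterised by the existence of an odd $\vz$ with $\vz\vg^{-1}\vz^{-1} = \vg$, hence is fixed. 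Thus $\mathrm{Fix}(\alpha\curvearrowright \CC)$ is exactly the set counted on the left-hand side of the lemma.

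Finally I identify the fixed characters using Corollary~\ref{irr_CGG_mod} and Theorem~\ref{table}. A simple $\C G$-module $V$ is fixed by $\alpha$ iff $V \cong \vw\cdot\overline{V}$; by the last-but-one row of table~\ref{table_classify}, this holds exactly for $V$'s whose corresponding A-representation has endomorphism ring $\R$ or $\mathbb{H}$. Conversely, Corollary~\ref{irr_CGG_mod} shows that every A-representation $W$ of type $\R$ or $\mathbb{H}$ arises from a unique $V$ (necessarily satisfying $V\cong\vw\cdot\overline{V}$), while A-representations of type $\C$ arise from a $2$-element $\alpha$-orbit $\{V,\vw\cdot\overline{V}\}$ contributing no fixed points. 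Therefore $\#\mathrm{Fix}(\alpha\curvearrowright S)$ equals the number of irreducible A-representations of type $\R$ or $\mathbb{H}$, and combining with Brauer's lemma yields the claim.

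The main obstacle is the character-side identification, which is really the content of this lemma: once Theorem~\ref{table} and Corollary~\ref{irr_CGG_mod} are in hand the argument is essentially bookkeeping, but without them one would need an independent analysis of when $V \cong \vw\cdot\overline{V}$ corresponds to a genuinely new (type $\R$ or $\mathbb{H}$) A-representation versus a paired complex type.
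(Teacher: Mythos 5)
Your proof is correct and follows essentially the same route as the paper: both apply Brauer's permutation lemma to the action of an odd element on irreducible characters and on conjugacy classes, identify the fixed classes via the B1/B2 dichotomy and the fixed characters via Theorem~\ref{table} and Corollary~\ref{irr_CGG_mod}. The only cosmetic difference is that you let $C_2$ (generated by a fixed odd $\vw$) act, while the paper sets up the action of all of $\widehat{G}$; since $G$ acts trivially on both sets, this changes nothing.
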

\begin{proof}
Let $\G$ act on $S$ by $\vz(\chi) \coloneqq \vz \cdot {}^{\pi(\vz)}\chi$. Now an odd $\vw$ fixes $\chi$ if and only if $\chi = \vw^{-1} \cdot \overline{\chi}$ which happens if and only if $\chi$ is not of complex type, as given in \ref{table}. 

Let $\G$ act on $\CC$ by $\vz((\vg)_G) = (\vz \vg^{\pi(\vz)} \vz^{-1})_{G}$. Then $\chi$ is fixed by an odd $\vw$ if and only if there is some $\vh \in G$ with $\vh^{-1}\vg \vh = \vw \vg^{-1} \vw^{-1}$, and this happens if and only if some odd element $\vz = \vh \vw$ has $\vg \vz = \vz \vg^{-1}$. Then by the criterion for Real conjugacy class splitting given in cases B1, B2 after \ref{lem:doubleaction}, this is the case if and only if $(\!(\vg)\!) = (\vg)_G$. Then for $\vg \in C \in \CC$, we have $\vz(\chi)_{\vz(C)} = \vz(\chi)(\vz \vg^{\pi(\vz)} \vz^{-1}) =\chi(\vz^2 \vg^{\pi(\vz)^2} \vz^{-2}) = \chi(\vg) = \chi_C$. Therefore, by \ref{braueraction} above, we have the result.
\end{proof}

\begin{cor}\label{diffcor}
$\#($Irreducible A-Representations of type $\C) = \# ($Conjugacy Classes$) - \# ($Real Conjugacy Classes$)$.
\end{cor}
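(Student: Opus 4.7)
The plan is to combine the two enumeration results immediately preceding the corollary and then perform a short double-counting argument on conjugacy classes.

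First, I would subtract. By Theorem~\ref{square}, the total number of irreducible A-representations equals the number of Real conjugacy classes. By Lemma~\ref{doubleactionlem}, the number of irreducible A-representations of type $\R$ or $\bH$ equals the number of conjugacy classes $(\vg)_G$ of $G$ satisfying $(\!(\vg)\!) = (\vg)_G$ (i.e.\ those falling into case B2). Subtracting gives
\[
\#(\text{A-reps of type }\C) \;=\; \#(\text{Real Conj Classes}) \;-\; \#\{(\vg)_G : (\!(\vg)\!) = (\vg)_G\}.
\]

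Second, I would convert the right-hand side into $\#(\text{Conj Classes}) - \#(\text{Real Conj Classes})$ by counting conjugacy classes of $G$ through the partition into the two cases B1 and B2 from Section~\ref{Real_CC}. Let $a$ denote the number of conjugacy classes of type B1 and $b$ the number of type B2. Then the total number of conjugacy classes of $G$ is $a+b$. Since a B1-class pairs with the conjugacy class of $\vz\vg^{-1}\vz^{-1}$ to form a single Real conjugacy class, while each B2-class is already a Real conjugacy class, the number of Real conjugacy classes equals $a/2 + b$. Substituting,
\[
\#(\text{Real Conj Classes}) - b \;=\; \tfrac{a}{2} \;=\; (a+b) - (\tfrac{a}{2}+b) \;=\; \#(\text{Conj Classes}) - \#(\text{Real Conj Classes}),
\]
which is exactly the desired identity.

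There is no real obstacle here; the only thing to verify carefully is the 2-to-1 / 1-to-1 bookkeeping between conjugacy classes and Real conjugacy classes, which is immediate from the dichotomy B1/B2 recorded after Lemma~\ref{lem:doubleaction}. The corollary then follows by chaining the two equalities above.
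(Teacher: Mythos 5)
Your first step invokes Theorem~\ref{square} (``the total number of irreducible A-representations equals the number of Real conjugacy classes''), but in this paper Theorem~\ref{square} is \emph{deduced from} Corollary~\ref{diffcor}: the proof of Theorem~\ref{square} reads ``By \ref{prop:centredim}, \#(Conjugacy Classes) $= r + 2s + t$, so by \ref{diffcor}, \#(Real Conjugacy Classes) $= \dots$''. So your argument is circular — you cannot use \ref{square} to prove \ref{diffcor}. The second half of your proof (the B1/B2 bookkeeping, $\#(\text{Real conj.\ classes}) = a/2 + b$) is correct and is indeed the combinatorial content both proofs need; the problem is entirely in where you source the first identity.

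The fix is close at hand and salvages most of your argument: instead of \ref{square}, use Theorem~\ref{prop:centredim}, which gives $\#(\text{Conj.\ Classes}) = r + 2s + t$ independently. Combined with Lemma~\ref{doubleactionlem} ($b = \#\{(\vg)_G : (\!(\vg)\!) = (\vg)_G\} = r + t$), you get $a = 2s$, hence $\#(\text{Real Conj.\ Classes}) = a/2 + b = r + s + t$ and $\#(\text{Conj.}) - \#(\text{Real Conj.}) = s$, which is the number of A-representations of type $\bC$. (This simultaneously proves Theorem~\ref{square} as a byproduct, as the paper does afterwards.) The paper's own proof takes a slightly different route through Brauer's theorem and Corollary~\ref{irr_CGG_mod} — it identifies $a$ with the number of irreducible complex characters of complex type and then halves — but the two arguments rest on the same two inputs: Lemma~\ref{doubleactionlem} and a count of ordinary conjugacy classes that does \emph{not} pass through Theorem~\ref{square}.
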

\begin{proof}
  The difference on the right
  is the number of pairs of conjugacy classes of $G$ which join together to form a Real conjugacy class.
  By Lemma~\ref{doubleactionlem}, this is twice the number of irreducible characters of $G$ of complex type.
This is the left-hand side by Corollary~\ref{irr_CGG_mod}. 
\end{proof}

\begin{proof}[Proof of \ref{square}]
By \ref{prop:centredim}, \# (Conjugacy Classes) $= r + 2s + t$, so by \ref{diffcor}, \# (Real Conjugacy Classes) = \# (Conjugacy Classes) $-$ \#(Irreducible A-Representations of type $\C$) $= (r + 2s + t)  - s = r+s+t =$ \#(Irreducible A-Representations).
\end{proof}

We now list some immediate consequences.

\begin{cor}\label{formbasis}
Irreducible A-characters form a basis of the $\C$-vector space of Real class functions.
\end{cor}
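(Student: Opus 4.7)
The plan is to combine three ingredients already established in the paper: (a) irreducible A-characters live inside the space of Real class functions; (b) they are linearly independent over $\C$; (c) there are exactly the right number of them.

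First I would observe that the space of Real class functions is, by definition, the space of $\C$-valued functions on $G$ that are constant on Real conjugacy classes, so its $\C$-dimension equals $\#(\text{Real Conjugacy Classes})$. The earlier lemma asserting that A-characters are Real class functions (proved just after the definition of A-characters in Section~\ref{chap3}) places all irreducible A-characters $\chi_1,\dots,\chi_n$ inside this space.

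Next I would deduce linear independence from Corollary~\ref{innerprodprop}. Part (i) gives $\langle \chi_j,\chi_k\rangle = 0$ for $j\neq k$, and part (iii) applied to $W=S_j$ gives $\langle \chi_j,\chi_j\rangle = m_j>0$. The form $\langle\cdot,\cdot\rangle$ is the standard Hermitian inner product on class functions on $G$, which is non-degenerate on this ambient space and restricts to a non-degenerate form on the subspace of Real class functions. Hence a relation $\sum_j a_j\chi_j=0$ with $a_j\in\C$ yields, upon pairing with $\chi_k$, the equation $a_k m_k=0$, forcing $a_k=0$. Thus $\chi_1,\dots,\chi_n$ are linearly independent.

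Finally, Theorem~\ref{square} asserts $n = \#(\text{Real Conjugacy Classes})$, which matches the dimension computed above. Therefore the linearly independent set $\{\chi_1,\dots,\chi_n\}$ has the right cardinality to be a basis, completing the proof. No step is a real obstacle since all of the substantive work has already been done; the only minor point to verify cleanly is that $\langle\cdot,\cdot\rangle$ remains non-degenerate when restricted to Real class functions, which is immediate because it is positive-definite on all class functions on $G$.
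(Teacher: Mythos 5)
Your proof is correct and is exactly the argument the paper intends: Corollary~\ref{formbasis} is listed as an immediate consequence of Theorem~\ref{square}, with linear independence supplied by the orthogonality relations of Corollary~\ref{innerprodprop} (equivalently Theorem~\ref{dim_hom}, giving $\langle\chi_j,\chi_j\rangle=m_j>0$) and membership in the space of Real class functions supplied by the lemma in Section~\ref{chap3}. The remark about non-degeneracy of the restricted form is not even needed, since pairing against each $\chi_k$ already kills the coefficients directly.
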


\begin{cor}[Column Orthogonality]
  Let $\chi_1, \ldots ,\chi_n$ be irreducible A-characters,
  $\vg_1, \ldots, \vg_n \in G$ representatives of Real conjugacy classes.
If $\CR_{\G}(\vg)$ is the Real stabiliser (Section~\ref{Real_CC}), then
\[
\frac{\left| \CR_{\G}(\vg_s) \right| }{2} \delta_{rs} = \sum_{j=1}^n \frac{1}{m_j} \chi_j (\vg_r) \overline{\chi_j(\vg_s)}.
\]
\end{cor}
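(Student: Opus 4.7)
The plan is to deduce column orthogonality from row orthogonality by invoking the fact that the A-character table is square (Theorem~\ref{square}). First, I would extract row orthogonality from the machinery already in place: combining Theorem~\ref{dim_hom} with Corollary~\ref{innerprodprop} applied to $W = S_i$ yields
\[
\langle \chi_i, \chi_j \rangle = m_j\,\delta_{ij}.
\]
The key observation is that A-characters are Real class functions, so the sum defining $\langle \chi_i, \chi_j \rangle$ can be regrouped as a sum over Real conjugacy class representatives, with each orbit $(\!(\vg_r)\!)$ contributing a factor of its size.

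Next, I would compute $|(\!(\vg_r)\!)|$ via the Orbit--Stabiliser Theorem applied to the Real conjugation action of $\G$ on $G$: since $[\G : G] = 2$, we have $|(\!(\vg_r)\!)| = |\G|/|\CR_{\G}(\vg_r)| = 2|G|/|\CR_{\G}(\vg_r)|$. Substituting this in, row orthogonality becomes the matrix identity
\[
\sum_{r=1}^n \frac{2}{|\CR_{\G}(\vg_r)|}\,\chi_j(\vg_r)\,\overline{\chi_i(\vg_r)} = m_j\,\delta_{ij}.
\]
Writing $M_{jr} = \chi_j(\vg_r)$, $D = \mathrm{diag}(2/|\CR_{\G}(\vg_r)|)$, and $E = \mathrm{diag}(m_j)$, this says $M D M^* = E$.

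By Theorem~\ref{square}, $M$ is a square $n \times n$ matrix, and since $E$ is invertible so is $M$. Rearranging gives $D^{-1} = M^* E^{-1} M$, and reading off the $(r,s)$-entry produces
\[
\frac{|\CR_{\G}(\vg_r)|}{2}\,\delta_{rs} = \sum_{j=1}^n \frac{1}{m_j}\,\overline{\chi_j(\vg_r)}\,\chi_j(\vg_s),
\]
which is the desired formula after interchanging $r$ and $s$. There is no serious obstacle here: once squareness of the character table is in hand, the argument is the standard ``invert the matrix equation'' deduction; the only point that deserves care is the factor of $2$ coming from $[\G : G] = 2$ when expressing $|(\!(\vg_r)\!)|$ in terms of the Real stabiliser.
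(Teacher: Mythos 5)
Your argument is correct and is precisely the standard deduction the paper has in mind when it lists column orthogonality as an ``immediate consequence'': row orthogonality $\langle\chi_i,\chi_j\rangle = m_j\delta_{ij}$ (from Theorem~\ref{dim_hom} and Schur's lemma), regrouping over Real conjugacy classes using that A-characters are Real class functions, orbit--stabiliser giving $|(\!(\vg_r)\!)| = 2|G|/|\CR_{\G}(\vg_r)|$, and inversion of the resulting square matrix identity via Theorem~\ref{square}. All the bookkeeping, including the factor of $2$, checks out.
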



\subsection{More Results on Irreducible A-Representations}
Let us state more consequences of Theorem~\ref{table}.

\begin{cor}\label{1dimendo}
The endomorphism ring of an irreducible A-representation is $\R$ if and only if the underlying $\C G $-module is simple. In particular, all 1-dimensional A-representations have endomorphism ring $\R$.
\end{cor}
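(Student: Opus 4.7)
The plan is to read off both claims directly from Theorem~\ref{table}, so the work is essentially bookkeeping across the three columns. First I would observe that the table characterises the three possible endomorphism rings $\R$, $\C$, $\mathbb{H}$ of an irreducible A-representation $W$ by the structure of its Complexification $W\res{\C G}{\CGG}$: the first column gives $W\res{}{}=V$ simple, whereas the middle and right columns give $W\res{}{}\cong V\oplus\vw\cdot\overline{V}$ and $W\res{}{}\cong V\oplus V$ respectively, each a direct sum of two nonzero simple $\C G$-submodules and hence not simple. This establishes the equivalence: $\End_A(W)\cong \R$ if and only if $W\res{}{}$ is simple.

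For the ``in particular'' part, suppose $W$ is a 1-dimensional A-representation, so $\dim_{\C}W=1$. Then $W\res{\C G}{\CGG}$ is a 1-dimensional $\C G$-module, hence automatically simple. Applying the equivalence just proved, $\End_A(W)\cong \R$.

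I do not expect any real obstacle here; the whole content is packaged into Theorem~\ref{table}, and the only thing to note carefully is that the $V$ appearing in the middle column is genuinely non-isomorphic to $\vw\cdot\overline{V}$ (so that the decomposition truly has two distinct simple summands and is not simple), which is precisely the content of the ``$V\cong\vw\cdot\overline{V}$?'' row of the table.
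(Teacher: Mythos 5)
Your proposal is correct and is exactly the intended argument: the paper states this corollary without proof as an immediate consequence of Theorem~\ref{table}, whose $W\res{}{}$ row shows the Complexification is simple precisely in the $\R$ column. (One minor remark: the non-isomorphism $V\not\cong\vw\cdot\overline{V}$ is not actually needed for non-simplicity, since any direct sum of two nonzero submodules fails to be simple, as your $\mathbb{H}$ case already uses.)
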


\begin{cor}
If $W$ is an irreducible A-representation with $\End_A(W) \not\cong \R$, then $\dim_{\C}W$ is even.
\end{cor}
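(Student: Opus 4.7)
The plan is to read this off directly from Theorem~\ref{table}. Let $W$ be an irreducible A-representation, and let $V$ be an irreducible $\bC G$-subrepresentation of $W\res{\bC G}{\CGG}$. Theorem~\ref{table} records the complex dimensions of $W$ in each of the three possible cases for $\End_A(W)$: in the column $\End_A(W) \cong \bR$ we have $\dim_{\bC} W = n$, while in both remaining columns ($\End_A(W) \cong \bC$ and $\End_A(W) \cong \bH$) the entry is $\dim_{\bC} W = 2n$, where $n = \dim_{\bC} V$.

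Hence, under the hypothesis $\End_A(W) \not\cong \bR$, we are in one of the latter two columns and $\dim_{\bC} W = 2n$ is even. There is nothing to compute beyond invoking the table.

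The only thing that might look like an obstacle is justifying that $n$ is a well-defined integer, but this is immediate: $V$ is a non-zero simple $\bC G$-module (such a $V$ exists since $W\res{\bC G}{\CGG}$ is a non-zero finite-dimensional semisimple $\bC G$-module), so $\dim_{\bC} V \in \bZ_{\geq 1}$. No further argument is required.
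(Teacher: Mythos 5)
Your proof is correct and is exactly the intended argument: the paper states this as an immediate consequence of Theorem~\ref{table}, whose last-two-column entries give $\dim_{\C}W = 2n$ whenever $\End_A(W)\not\cong\R$. Nothing further is needed.
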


\begin{prop}
The central primitive idempotent of $\CGG$ corresponding to the irreducible A-character $\chi_j$ is
\[
e_j = \frac{\chi_j (\ve)}{m_j \left| G \right|} \sum_{\vg \in G} \chi_j(\vg^{-1})\vg.
\]
\end{prop}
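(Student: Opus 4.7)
The plan is to show that $e_j$, viewed as an element of $\CGG$ acting on each irreducible A-representation $S_k$, is the identity when $j=k$ and zero otherwise. Since $\CGG$ is semisimple, this uniquely characterises $e_j$ as the central primitive idempotent corresponding to $S_j$, and simultaneously yields idempotence, centrality, orthogonality, and primitivity.

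Since $e_j \in \bC G$, its action on $S_k$ is determined by the decomposition of $S_k\res{\bC G}{\CGG}$ into simple $\bC G$-modules given by Theorem~\ref{table}: $V_j$ in type $\bR$, $V_j \oplus \vw\cdot\overline{V_j}$ in type $\bC$, and $V_j \oplus V_j$ in type $\bH$. Correspondingly $\chi_j = \sum_{V\in I_j}\chi_V$ where $I_j$ is the multiset of simple summands, and Theorem~\ref{table} records $(\chi_j(\ve), m_j) = (n,1), (2n,2), (2n,4)$ in the three types, where $n = \dim_{\bC}V_j$. Combining these data with the classical identity that $\frac{\dim V}{|G|}\sum_{\vg\in G}\chi_V(\vg^{-1})\vg$ is the central primitive idempotent projecting onto the $V$-isotypic component in $\bC G$, a direct computation---with the multiplicity-two appearance of $V_j$ in type $\bH$ exactly compensating the denominator $m_j = 4$---shows that $e_j$ acts as the identity on every simple $\bC G$-summand of $S_j\res{}{}$, hence on all of $S_j$.

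For $k \neq j$, Corollary~\ref{irr_CGG_mod} ensures that the multisets $I_j$ and $I_k$ share no common element (distinct A-representations have disjoint sets of $\bC G$-constituents, with the type $\bC$ pair $\{V, \vw\cdot\overline{V}\}$ treated as a single orbit). Thus $e_j$ annihilates every simple $\bC G$-summand of $S_k\res{}{}$, hence all of $S_k$. Semisimplicity of $\CGG$ then forces $e_j$ to be the central primitive idempotent of the A-block of $S_j$. The main subtlety lies not in the Real representation theory itself but in a purely arithmetic verification: the single formula $\chi_j(\ve)/(m_j|G|)$ gives the correct normalisation across all three types of Theorem~\ref{table} because of a three-way coincidence between the dimension data $\chi_j(\ve)$, the endomorphism-ring dimensions $m_j$, and the multiplicities of simple $\bC G$-summands in $S_j\res{}{}$.
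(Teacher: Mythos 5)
Your proof is correct, but it runs in the opposite direction to the paper's. The paper takes the existence of the central primitive idempotent $e_j$ for granted (Artin--Wedderburn), notes via Lemma~\ref{lem:centralcrit} that any central element of $\CGG$ is supported on $G$, and then solves for the coefficients $a_{\vg}$ by evaluating the regular A-character: $\chi_R(e_j\vg^{-1}) = 2|G|a_{\vg}$ combined with $\chi_R = \sum_k \frac{2\dim_{\C}W_k}{m_k}\chi_k$ (Corollary~\ref{innerprodprop}) gives the formula in one line. You instead start from the formula and verify that the element it defines acts as the identity on $S_j$ and as zero on every other simple, by reducing to the classical idempotent formula in $\C G$ through the restriction data of Theorem~\ref{table}; this needs the extra input that distinct simple $\CGG$-modules have disjoint $\C G$-constituents, which indeed follows from Theorem~\ref{table} (each $V\ind{}{}$ is $W$-isotypic) together with Frobenius reciprocity, and your case-by-case normalisation check ($(\chi_j(\ve),m_j,\text{multiplicities}) = (n,1,1),(2n,2,1{+}1),(2n,4,2)$) is right. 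Your route is longer but more self-contained: it re-proves centrality, idempotence and primitivity rather than presupposing them, and it makes explicit that $e_j$ equals the classical $\C G$-idempotent of $V$ in types $\R$ and $\H$, and the sum of those of $V$ and $\vw\cdot\overline{V}$ in type $\C$ --- a fact the paper only records as a remark after the proposition. One wording caveat: your closing phrase ``the central primitive idempotent of the A-block of $S_j$'' is imprecise, since in types V--VII the A-block idempotent $f$ is a sum of \emph{two} central primitive idempotents of $\CGG$; what your argument actually (and correctly) identifies is the central primitive idempotent corresponding to $S_j$ itself.
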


\begin{proof}
We know from \ref{lem:centralcrit} that $e_j$ is a $\C$-linear combination of $\vg \in G$. Hence, $e_j = \sum_{\vg \in G} a_{\vg} \vg$. The regular representation $\CGG$ has character
\[
\chi_R(\vg) = \twopartdef{2 \left| G \right|}{\vg = \ve,}{0}{\vg \neq \ve,}
\]
so that 
$\chi_R(e_j \vg^{-1}) = \chi_R(a_{\vg} \ve) = 2 \left| G \right| a_{\vg}$
for all $\vg \in G$.
If $W_j$ is the A-representation that affords the A-character $\chi_j$, then
\begin{align*}
\chi_R(e_j \vg^{-1}) = \sum_k \frac{2 \dim_{\C}W_k}{m_k}\chi_k(e_j \vg^{-1}) =\frac{2 \dim_{\C}W_j}{m_j}\chi_j(e_j \vg^{-1}) =\frac{2 \chi_j(\ve)}{m_j} \chi_j(\vg^{-1}),
\end{align*}
since $e_j$ acts as the identity on $W_j$ and as $0$ on $W_k$ for $j \neq k$.
\end{proof}

If $\chi_j$ is of type $\R$ or $\mathbb{H}$, then $e_j$ is a central primitive idempotent of $\C G$. 
If $\chi_j$ of complex type, $e_j$ is the sum of two primitive central idempotents of $\bC G$.

\begin{cor}
If, as $\R$-algebras, 
\[
\CGG \cong \prod_{i=1}^r M_{a_i}(\R) \times \prod_{j=1}^s M_{b_j}(\C) \times \prod_{k=1}^t M_{c_k}(\mathbb{H}),
\]
then, as $\bC$-algebras, 
\[
\C G  \cong \prod_{i=1}^r M_{\frac{a_i}{2}}(\C) \times \prod_{j=1}^s \left( M_{\frac{b_j}{2}}(\C) \times M_{\frac{b_j}{2}}(\C) \right) \times \prod_{k=1}^t M_{c_k}(\C).
\]
\end{cor}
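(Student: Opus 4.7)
The approach is to apply Artin--Wedderburn on both $\CGG$ (as $\R$-algebra) and $\bC G$ (as $\bC$-algebra) and match simple direct summands using Theorem~\ref{table} together with Corollary~\ref{irr_CGG_mod}. Each simple direct summand of $\CGG$ corresponds to an isomorphism class of irreducible A-representation $W$ and has the form $M_k(D)$, where $D = \End_A(W) \in \{\R, \C, \bH\}$ and the matrix size is $k = \dim_D W = \dim_\R W/\dim_\R D$; the analogous statement holds for $\bC G$. The plan is to verify, type by type in Theorem~\ref{table}, that the simple constituents of the Complexification $W \res{\bC G}{\CGG}$ produce precisely the claimed $\bC G$-factor(s) on the right-hand side, and then to invoke Corollary~\ref{irr_CGG_mod} to see that every simple $\bC G$-module is accounted for exactly once.

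The case analysis: if $W$ is of type $\R$, then $D = \R$ and the $\CGG$-factor is $M_{a_i}(\R)$ with $a_i = \dim_\R W = 2 \dim_\C W$, while Theorem~\ref{table} gives $W \res{\bC G}{\CGG} = V$ simple with $\dim_\C V = a_i/2$, producing one $\bC G$-factor $M_{a_i/2}(\C)$. If $W$ is of type $\C$, then $D = \C$ and the factor is $M_{b_j}(\C)$ with $b_j = \dim_\R W/2 = \dim_\C W$, while Theorem~\ref{table} gives $W \res{\bC G}{\CGG} = V \oplus \vw \cdot \overline{V}$ with $V \not\cong \vw \cdot \overline{V}$ and $\dim_\C V = b_j/2$, producing two distinct $\bC G$-factors, each $M_{b_j/2}(\C)$. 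If $W$ is of type $\bH$, then $D = \bH$ and the factor is $M_{c_k}(\bH)$ with $c_k = \dim_\R W/4 = \dim_\C W/2$, while Theorem~\ref{table} gives $W \res{\bC G}{\CGG} = V \oplus V$ with $\dim_\C V = c_k$, producing one $\bC G$-factor $M_{c_k}(\C)$.

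Corollary~\ref{irr_CGG_mod} then closes the argument: every simple $\bC G$-module $V$ appears as a simple constituent of $W \res{\bC G}{\CGG}$ for a unique irreducible A-representation $W$, and in the type $\C$ case the two simple summands $V$ and $\vw \cdot \overline{V}$ are both assigned to the same $W$. Hence the assignment $W \mapsto \{\text{simple constituents of } W \res{\bC G}{\CGG}\}$ partitions the set of isomorphism classes of simple $\bC G$-modules, and the product decomposition of $\bC G$ reassembles into exactly the form claimed.

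The only technical point requiring care is the matrix-size formula $k = \dim_\R W / \dim_\R D$, which sidesteps a potential confusion between the ``natural'' $\bC$-structure on $W$ (coming from $\bC \subset \CGG$, which is \emph{not} central in $\CGG$) and the copy of $\bC$ sitting inside $\End_A(W)$ for types $\C$ and $\bH$. Working exclusively with $\R$-dimensions makes this a non-issue, after which the argument is pure bookkeeping with Theorem~\ref{table} and Corollary~\ref{irr_CGG_mod}; I do not foresee any substantive obstacle.
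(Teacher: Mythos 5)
Your proof is correct, and it follows essentially the same route the paper intends: the corollary is stated there as an immediate consequence of Theorem~\ref{table}, Corollary~\ref{irr_CGG_mod} and the preceding remark that each central primitive idempotent $e_j$ of $\CGG$ is either one primitive central idempotent of $\bC G$ (types $\R$, $\bH$) or a sum of two (type $\C$). Your dimension bookkeeping via $k=\dim_\R W/\dim_\R D$ checks out in all three cases against the table.
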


\subsection{Applications of the Frobenius-Schur Indicator}
The results of this section all are consequences of Theorem~\ref{FS}.
Let us introduce another version of the Frobenius-Schur indicator, applicable to A-characters.
Let $W$ be an A-representation with A-character $\chi=\chi_W$.
Define $\F_A(W) = \F_A(\chi) \coloneqq \F(\chi_{W \res{\bC G}{\CGG}}) = \F(\chi)$.

\begin{prop}\label{Afstypes}
  If $W$ is an irreducible A-representation, then 
\begin{align*}
\F_A(W) =
\left\{
		\begin{array}{cl}
			1 & \mbox{if } W \mbox{ is of type } \bR, \\
			0 & \mbox{if } W \mbox{ is of type } \bC,  \\
			-2 & \mbox{if } W \mbox{ is of type } \bH.
		\end{array}
	\right.
\end{align*}
\end{prop}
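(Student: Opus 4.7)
The plan is to reduce everything to the already-established Bargmann--Frobenius--Schur criterion (Theorem~\ref{FS}) using the explicit description of $W\res{\bC G}{\CGG}$ from Theorem~\ref{table}. The key observation is that $\F$ is $\C$-linear on characters, since by definition $\F(\chi)=\frac{1}{|G|}\sum_{\vz\in\G\setminus G}\chi(\vz^2)$, so we can split the A-character of the complexification term by term.

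First, I would read off from Theorem~\ref{table} that, for an irreducible A-representation $W$ with simple constituent $V$ of $W\res{}{}$, the complexified $\bC G$-character is
\begin{equation*}
\chi_{W\res{}{}} \;=\;
\begin{cases}
\chi_V & \text{if $W$ is of type $\bR$,} \\
\chi_V+\vw\cdot\overline{\chi_V} & \text{if $W$ is of type $\bC$,} \\
2\,\chi_V & \text{if $W$ is of type $\bH$.}
\end{cases}
\end{equation*}
Linearity of $\F$ then reduces the statement to evaluating $\F$ on each summand.

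Next, I must verify that each summand is an irreducible $\bC G$-character lying in the same A-block as $W$, so that Theorem~\ref{FS} applies. For the real and quaternionic cases this is immediate: $V$ is a simple $\bC G$-module with $W$ a summand of $V\ind{}{}$, hence $V$ sits in the central idempotent $f$ cutting out the A-block of $W$. For the complex case, Corollary~\ref{irr_CGG_mod} gives $W\cong V\ind{}{}\cong (\vw\cdot\overline{V})\ind{}{}$, so $\vw\cdot\overline{V}$ is also an irreducible $\bC G$-module in the same (complex-type) A-block as $W$. Consequently Theorem~\ref{FS} yields $\F(\chi_V)=1,0,-1$ in the three respective cases, and also $\F(\vw\cdot\overline{\chi_V})=0$ in the complex case.

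Finally, substituting these values gives $\F_A(W)=1$ in the real case, $\F_A(W)=0+0=0$ in the complex case, and $\F_A(W)=2\cdot(-1)=-2$ in the quaternionic case, as claimed. There is no substantive obstacle here: all the real work has already been done in Theorem~\ref{table} and Theorem~\ref{FS}; the only point worth checking carefully is that the twisted character $\vw\cdot\overline{\chi_V}$ in the complex case genuinely comes from an irreducible $\bC G$-module in the A-block of $W$, which is exactly what Corollary~\ref{irr_CGG_mod} provides.
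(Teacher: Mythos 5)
Your proposal is correct and follows essentially the same route as the paper, which simply cites Theorems~\ref{FS} and \ref{table}: you decompose $\chi_{W\res{}{}}$ according to type, use linearity of $\F$, and apply the Bargmann--Frobenius--Schur criterion to each simple constituent. The only extra care you take --- checking via Corollary~\ref{irr_CGG_mod} that $\vw\cdot\overline{V}$ also lies in the A-block of $W$ in the complex case --- is a detail the paper leaves implicit, and you handle it correctly.
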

\begin{proof} This follows immediately from Theorems~\ref{FS} and \ref{table}.
\end{proof}

This leads to a character theoretic criterion of irreducibility.
\begin{prop}\label{simpleAtest}
An A-representation with A-character $\chi$ is simple if and only if $\langle \chi, \chi \rangle + \F_A(\chi) = 2$.
\end{prop}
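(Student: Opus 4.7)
The plan is to reduce this to a non-negativity argument after decomposing $W$ into irreducibles. Write $W\cong\bigoplus_j S_j^{a_j}$ where the $S_j$ range over non-isomorphic irreducible A-representations with A-characters $\chi_j$ and endomorphism-ring dimensions $m_j=\dim_\R\End_A(S_j)\in\{1,2,4\}$ (types $\bR,\bC,\bH$ respectively). Then $\chi=\sum_j a_j\chi_j$, and by Corollary~\ref{innerprodprop}(iii),
\[
\langle\chi,\chi\rangle=\sum_j a_j^2 m_j.
\]
Since $\F_A$ is clearly $\R$-linear (it is defined via a sum of traces of $\rho(\vz^2)$ over odd $\vz$, applied to the underlying complex character), we also have $\F_A(\chi)=\sum_j a_j \F_A(\chi_j)$, and Proposition~\ref{Afstypes} gives $\F_A(\chi_j)\in\{1,0,-2\}$ according to the type of $S_j$.

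Next I would combine these into a single sum:
\[
\langle\chi,\chi\rangle+\F_A(\chi)=\sum_j\bigl(a_j^2 m_j+a_j\F_A(\chi_j)\bigr),
\]
and examine the individual summand $f(a_j)\coloneqq a_j^2 m_j+a_j\F_A(\chi_j)$ type by type: for type $\bR$ it equals $a_j^2+a_j=a_j(a_j+1)$; for type $\bC$ it equals $2a_j^2$; and for type $\bH$ it equals $4a_j^2-2a_j=2a_j(2a_j-1)$. The key observation is that in each of the three cases $f(a_j)\ge 0$ always, $f(a_j)=0$ iff $a_j=0$, and whenever $a_j\ge 1$ we have $f(a_j)\ge 2$ with equality iff $a_j=1$. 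This is the sanity check that each irreducible A-representation satisfies $\langle\chi_j,\chi_j\rangle+\F_A(\chi_j)=2$, already implicit in Corollary~\ref{innerprodprop} and Proposition~\ref{Afstypes}.

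From here the conclusion is immediate: $\langle\chi,\chi\rangle+\F_A(\chi)$ is a sum of non-negative integer contributions, each of which is either $0$ or at least $2$, so it equals $2$ if and only if exactly one $a_j$ equals $1$ and all other $a_j$ vanish, i.e.\ if and only if $W$ is simple. I do not anticipate a real obstacle; the only subtlety is to make sure that in the quaternionic case the coefficient $m_j=4$ (not $1$) is used, since it is $\dim_\R\End_A(S_j)$ that appears in Corollary~\ref{innerprodprop}(iii), and that the three values $\{1,0,-2\}$ (not $\{1,0,-1\}$) of $\F_A$ are precisely what make the arithmetic $m_j+\F_A(\chi_j)=2$ work uniformly across all three types.
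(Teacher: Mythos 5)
Your proof is correct and follows essentially the same route as the paper's: decompose $W\cong\bigoplus_j S_j^{a_j}$, compute $\langle\chi,\chi\rangle+\F_A(\chi)$ term by term using Corollary~\ref{innerprodprop}(iii) and Proposition~\ref{Afstypes}, and observe each summand is $0$ or $\ge 2$ with equality to $2$ exactly when $a_j=1$. If anything, your version is slightly more careful than the paper's, which writes $\F_A(W)=\sum_j\F_A(S_j)$ rather than $\sum_j a_j\F_A(S_j)$; your inclusion of the multiplicity $a_j$ is the correct use of linearity, and the case analysis still closes in the same way.
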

\begin{proof}
Decompose $W$ into irreducible A-representations: $W \cong \oplus_j S_j^{a_j}$. Then
\begin{align*}
\langle \chi, \chi \rangle + \F_A(W) = \sum_j a_j^2 m_j + \sum_j \F_A(S_j)
= \sum_j (a_j^2 m_j + \F_A(S_j)).
\end{align*}
Consider the term $a_j^2 m_j + \F_A(S_j)$: 
\begin{itemize}
\item
If $\End_A(S_j) \cong \R$, $a_j^2 m_j + \F_A(S_j) = a_j^2 + 1 \geq 2$.
\item
If $\End_A(S_j) \cong \C$, $a_j^2 m_j + \F_A(S_j) = 2a_j^2 \geq 2$.
\item
If $\End_A(S_j) \cong \mathbb{H}$, $a_j^2 m_j + \F_A(S_j) = 4a_j^2 -2\geq 2$.
\end{itemize}
Therefore, all these terms are positive and at least $2$. The sum is 2 if and only if it contains the single term $a_1^2 m_1 + \F_A(S_1)$ with $a_1 = 1$.
\end{proof}

Let $\chi_1,\ldots,\chi_n$ be all distinct irreducible complex characters of $G$.
\begin{prop}\label{sqsum1}
  Define $\fn{r}{G}{\N}$ by $r(\vh) = \left| \{\vz \in \G \setminus G \mid \vz^2 = \vh\} \right|$.
  Then
  $$
  r(\vh) = \sum_{j=1}^{n} \F(\chi_j) \chi_j(\vh)\, .
  $$
\end{prop}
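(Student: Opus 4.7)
The plan is to show that $r$ is a class function on $G$ and then compute its expansion in the basis of irreducible complex characters of $G$; the coefficients will turn out to be precisely the Real Frobenius-Schur indicators.

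First I would verify that $r$ is a class function on $G$. For $\vg \in G$, the map $\vz \mapsto \vg \vz \vg^{-1}$ is a bijection $\G \setminus G \to \G \setminus G$ (conjugation by an even element preserves the coset decomposition), and it squares correctly: $(\vg \vz \vg^{-1})^2 = \vg \vz^2 \vg^{-1}$. Hence $\{\vz \in \G\setminus G \mid \vz^2 = \vg \vh \vg^{-1}\}$ is in bijection with $\{\vz \in \G\setminus G \mid \vz^2 = \vh\}$, giving $r(\vg \vh \vg^{-1}) = r(\vh)$.

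Since the irreducible complex characters $\chi_1, \ldots, \chi_n$ form an orthonormal basis of the space of class functions on $G$, we can write $r = \sum_j a_j \chi_j$ with $a_j = \langle r, \chi_j \rangle_G$. Computing this inner product:
\[
a_j = \frac{1}{|G|} \sum_{\vh \in G} r(\vh) \overline{\chi_j(\vh)} = \frac{1}{|G|} \sum_{\vz \in \G\setminus G} \overline{\chi_j(\vz^2)},
\]
where the second equality follows by partitioning the sum over $\vh$ according to which odd elements square to $\vh$.

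The final step is to recognise this as $\F(\chi_j)$. Using $\overline{\chi_j(\vx)} = \chi_j(\vx^{-1})$ for a character of a finite group, we have $\overline{\chi_j(\vz^2)} = \chi_j(\vz^{-2}) = \chi_j((\vz^{-1})^2)$. Since $\vz \mapsto \vz^{-1}$ is a bijection of $\G \setminus G$ onto itself, substituting $\vy = \vz^{-1}$ gives
\[
a_j = \frac{1}{|G|}\sum_{\vy \in \G\setminus G} \chi_j(\vy^2) = \F(\chi_j),
\]
by the definition of the Real Frobenius-Schur indicator. This yields $r(\vh) = \sum_j \F(\chi_j) \chi_j(\vh)$ as required. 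There is no real obstacle here: the only subtlety is the need to use the $\vz \mapsto \vz^{-1}$ symmetry to convert $\overline{\chi_j(\vz^2)}$ into $\chi_j(\vy^2)$ so that it matches the definition of $\F$.
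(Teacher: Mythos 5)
Your proof is correct and follows essentially the same route as the paper: expand the class function $r$ in the basis of irreducible characters and identify each coefficient with $\F(\chi_j)$ by regrouping the sum over odd elements. The only difference is that you carefully handle the complex conjugate via the bijection $\vz \mapsto \vz^{-1}$ of $\G \setminus G$ (a point the paper's computation glosses over), and you spell out why $r$ is a class function; both are welcome but minor refinements of the same argument.
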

\begin{proof}
Since $\fn{r}{G}{\N}$ is a class function, $r = \sum_j a_j \chi_j$ for some $a_j \in \C$. Then 
\begin{align*}
a_j &= \langle \chi_j , r \rangle
= \frac{1}{\left| G \right|} \sum_{\vh \in G} \# \{\vz \in \G \setminus G \mid \vz^2 = \vh\} \chi_j(\vh) \\
&= \frac{1}{\left| G \right|} \sum_{\vh \in G} \sum_{\substack{\vz \in \G \setminus G \\ \vz^2 = \vh}} \chi_j(\vz^2)
= \frac{1}{\left| G \right|} \sum_{\vz \in \G \setminus G} \chi_j(\vz^2) = \F(\chi_j). \qedhere
\end{align*}
\end{proof}

\begin{cor}\label{sqsum2}
$r(\ve) = \left| \{\vz \in \G \setminus G \mid \vz^2 = \ve\} \right| = \sum_{j=1}^n\F(\chi_j) \dim_{\C} (\chi_j)$.
\end{cor}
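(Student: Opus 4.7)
The plan is to observe that this corollary is an immediate specialization of Proposition~\ref{sqsum1} at the identity element $\vh = \ve$. Since Proposition~\ref{sqsum1} asserts the identity of class functions
\[
r(\vh) = \sum_{j=1}^{n} \F(\chi_j)\chi_j(\vh)
\]
for all $\vh \in G$, there is no additional argument needed beyond substitution.

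First I would note that the definition of $r$ evaluated at the identity gives exactly $r(\ve) = |\{\vz \in \G \setminus G \mid \vz^2 = \ve\}|$, which is the quantity on the left-hand side of the claim. Then I would substitute $\vh = \ve$ into the formula from Proposition~\ref{sqsum1}, using the standard fact that $\chi_j(\ve) = \dim_{\C}(\chi_j)$ since each $\chi_j$ is the character of a complex representation of $G$ and the identity acts as the identity operator whose trace equals the dimension.

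There is no genuine obstacle here: the work has all been done in establishing Proposition~\ref{sqsum1}. The only mild point to mention is that the sum $\sum_{j=1}^n \F(\chi_j) \dim_{\C}(\chi_j)$ is automatically a non-negative integer (it counts square roots of the identity in the odd coset), which serves as a useful consistency check but is not required for the proof itself.
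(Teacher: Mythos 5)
Your proposal is correct and matches the paper exactly: the corollary is stated without proof precisely because it is the specialization of Proposition~\ref{sqsum1} at $\vh = \ve$, combined with $\chi_j(\ve) = \dim_{\C}(\chi_j)$. Nothing further is needed.
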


\begin{cor}\label{sqsum3}
  If $G$ has no A-representations of type $\bH$, then $\fn{r}{G}{\N}$ attains its maximum value at the identity.
\end{cor}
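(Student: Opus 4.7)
The plan is to combine Proposition~\ref{sqsum1} with the Bargmann-Frobenius-Schur Criterion (Theorem~\ref{FS}) and the elementary bound $|\chi(\vh)| \le \chi(\ve)$ for complex characters.

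First I would invoke Proposition~\ref{sqsum1} to rewrite $r(\vh) = \sum_{j=1}^{n} \F(\chi_j)\chi_j(\vh)$. By Theorem~\ref{FS}, each value $\F(\chi_j)$ lies in $\{-1,0,1\}$, and the value $-1$ occurs precisely when the A-block of $\chi_j$ contains a simple $\CGG$-module of type $\bH$. The hypothesis that $G$ admits no A-representations of type $\bH$ therefore forces $\F(\chi_j)\in\{0,1\}$ for every $j$. Setting $J = \{j : \F(\chi_j) = 1\}$, we obtain the cleaner formula $r(\vh) = \sum_{j \in J} \chi_j(\vh)$.

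Next I would estimate. For any complex character, $|\chi_j(\vh)| \le \chi_j(\ve) = \dim_{\bC}\chi_j$, with equality at $\vh = \ve$. Since $r(\vh)$ is a non-negative integer (it counts square roots of $\vh$ in $\widehat{G}\setminus G$), the triangle inequality gives
\[
r(\vh) \;\le\; |r(\vh)| \;\le\; \sum_{j\in J} |\chi_j(\vh)| \;\le\; \sum_{j\in J} \chi_j(\ve) \;=\; r(\ve),
\]
which is exactly the claim.

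There is essentially no obstacle here; once the sign constraint $\F(\chi_j) \ge 0$ is extracted from Theorem~\ref{FS}, the statement reduces to the standard character-bound argument that underlies the classical fact that the number of involutions in a finite group is maximised at the identity (when all Frobenius-Schur indicators are non-negative). The only point worth being careful about is tracking that $r$ takes values in $\N$ so that $r(\vh) \le |r(\vh)|$ is automatic; otherwise one would only get a bound on $|r(\vh)|$.
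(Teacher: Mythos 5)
Your proposal is correct and follows essentially the same route as the paper: Proposition~\ref{sqsum1} gives the character-sum formula, Theorem~\ref{FS} (together with the hypothesis) kills the negative coefficients, and the triangle inequality plus $|\chi_j(\vh)|\le\chi_j(\ve)$ finishes. The only cosmetic difference is that the paper writes $r(\vg)=\bigl|\sum_{j:\F(\chi_j)=1}\chi_j(\vg)\bigr|$ directly (using $r\ge 0$) where you insert the extra step $r(\vh)\le|r(\vh)|$, but this is the same observation.
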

\begin{proof}
Let $\vg \in G$. Using the facts that $r(\vg) \in \R_{\geq 0}$ and $\F(\chi_j)\geq 0$, 
\[
r(\vg) = \left| \sum_{j \: : \: \F(\chi_j) = 1}  \chi_j(\vg) \right|
\leq \sum_{j \: : \: \F(\chi_j) = 1} \left| \chi_j(\vg) \right| \leq \sum_{j \: : \: \F(\chi_j) = 1} \left| \chi_j(\ve) \right| = r(\ve). \qedhere
\]
\end{proof}

\subsection{The Classical and the Split Cases} \label{classical_split}
For the standard Real structure, we can choose $\vw = (\ve,-1)$ as a fixed odd element.
Since $\vw$ is central, $V \cong \vw \cdot \overline{V}$ if and only if $V \cong \overline{V}$.
Furthermore, $\vw$-invariant bilinear form is simply a $G$-invariant bilinear form (see~\eqref{invariant_form}). 
Moreover, $\vw$-alternating/$\vw$-symmetric (cf. Theorem~\ref{biltypes}) are simply alternating/symmetric respectively because $\vw^2 = (\ve,1)$.
The results of this section fully generalise the classical theory
(cf. \cite[Ch. 9]{CR2} or \cite[Appendix]{RUM}).

For a split Real structure, we can choose a fixed odd element $\vw$ with $\vw^2=\ve$.
In particular, $\vw$-alternating/$\vw$-symmetric are still simply alternating/symmetric.
However, $\vw$ is no longer central, so the forms are no longer $G$-invariant.
This briefly illustrates the difficulties of going from the classical case to the split case, and then to the general case.

\section{The Real Structure $A_n \leq S_n$}\label{A<S}
Throughout this section, let $G \leq \G$ be  a $C_2$-graded group with all simple $\R \G$ modules of type $\R$.
This is called a \emph{totally orthogonal} Real structure. 
This ensures that all conjugacy classes of $\G$ are self-inverse
and we can use
Proposition~\ref{pr:selfinverseReal}:
\begin{cor}\label{Realcycle}
  If $\vg \in G$, then
  $(\!(\vg)\!) = \twopartdef{(\vg)_{G}}{(\vg)_{G} \text{ is not self-inverse,}}{(\vg)_{\G}}{(\vg)_{G} \text{ is self-inverse.}}$
\end{cor}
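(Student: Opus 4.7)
The plan is to reduce the statement to Proposition \ref{pr:selfinverseReal}, whose conclusion is exactly what is claimed, but whose hypothesis is that every conjugacy class of $\widehat{G}$ is self-inverse. So the only work is to verify that total orthogonality of the Real structure forces the group $\widehat{G}$ to be \emph{ambivalent}, i.e.\ that $(\vz)_{\widehat{G}} = (\vz^{-1})_{\widehat{G}}$ for every $\vz \in \widehat{G}$.

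For this I would argue entirely on the level of $\widehat{G}$, forgetting the grading momentarily. By hypothesis every simple $\R\widehat{G}$-module $X$ has $\End_{\R\widehat{G}}(X) \cong \R$, which is to say that $\widehat{G}$ has classical Frobenius-Schur indicator $\widehat{\F}_{\R}(X) = +1$, equivalently each irreducible complex character $\chi$ of $\widehat{G}$ satisfies $\widehat{\F}_{\C}(\chi) = 1$. This already implies $\chi = \overline{\chi}$ for every irreducible complex character (since characters of quaternionic or complex type are excluded, and in particular complex-conjugate pairs of distinct characters cannot occur). Hence every irreducible complex character of $\widehat{G}$ is real-valued.

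The passage from real-valued characters to ambivalence is then a classical application of \ref{braueraction}: let $A = \langle \sigma \rangle$ of order two act on the set $S$ of irreducible characters of $\widehat{G}$ by complex conjugation, $\sigma(\chi) = \overline{\chi}$, and on the set of conjugacy classes by $\sigma((\vz)_{\widehat{G}}) = (\vz^{-1})_{\widehat{G}}$. Compatibility $\sigma(\chi)_{\sigma(C)} = \chi_C$ is immediate from $\overline{\chi(\vz^{-1})} = \chi(\vz)$. By Brauer's theorem, $\sigma$ fixes as many conjugacy classes as characters, and we have just seen that it fixes every character; hence it fixes every conjugacy class, so each $(\vz)_{\widehat{G}}$ is self-inverse.

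With ambivalence of $\widehat{G}$ established, Proposition \ref{pr:selfinverseReal} applies directly to give the stated dichotomy. The only conceptual step is the first one — connecting the Real/antilinear hypothesis ``all simple $\R\widehat{G}$-modules are of type $\R$'' to the classical Frobenius-Schur invariant of $\widehat{G}$ — but this is just the translation between $\End_{\R\widehat{G}}(X)$ and $\widehat{\F}_{\R}$, so there is no real obstacle.
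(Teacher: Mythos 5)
Your proposal is correct and follows essentially the same route as the paper: the paper's own justification is precisely that total orthogonality forces all conjugacy classes of $\G$ to be self-inverse, after which Proposition~\ref{pr:selfinverseReal} applies verbatim. You have merely made explicit the (standard) step the paper leaves implicit — that type-$\R$ for all simple $\R\G$-modules gives real-valued irreducible characters, and Brauer's permutation lemma (\ref{braueraction}) then yields ambivalence of $\G$.
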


The symmetric Real structure on $A_n$ is totally orthogonal. Moreover, 
we can say precisely which conjugacy classes in $A_n$ are self-inverse.
\begin{lem}\label{lem:realalternatingcycle}
Let $\vg\in A_n$ have disjoint cycle decomposition with  cycle lengths $r_1, ... , r_k$. The following statements are equivalent:  
\begin{enumerate}[label=(\roman*)]
\item the class $(\vg)_{A_n}$ is not self-inverse,
\item   the  $r_j$ are distinct, each odd and $\sum_{j=1}^k \frac{r_j - 1}{2}$ is odd,
\item   the  $r_j$ are distinct, each odd, and the number of the $r_j$, congruent to $3 \! \mod 4$ is odd.
\end{enumerate}  
\end{lem}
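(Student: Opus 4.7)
\medskip

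\noindent\textbf{Proof plan.} The plan is to dispose of the implication (i)$\Leftrightarrow$(ii) by the classical analysis of conjugacy in $S_n$ versus $A_n$, and then to check (ii)$\Leftrightarrow$(iii) by an elementary mod-$4$ calculation.

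First I would recall that any $\sigma\in S_n$ is conjugate to $\sigma^{-1}$ in $S_n$, since inversion preserves cycle type. Hence whenever $(\vg)_{A_n}=(\vg)_{S_n}$ the class is automatically self-inverse. By the classical splitting criterion (which follows from the A1/A2 dichotomy in Lemma~\ref{lem:doubleaction} applied to $A_n\leq S_n$), $(\vg)_{S_n}$ splits into two distinct $A_n$-classes precisely when the cycle lengths $r_1,\dots,r_k$ of $\vg$ are all odd and pairwise distinct. So for (i) to have any chance of holding, this combinatorial condition is necessary.

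Assume now $r_1,\dots,r_k$ are distinct and odd. Fix $c\in S_n$ with $c\vg c^{-1}=\vg^{-1}$; the set of all such elements is the coset $c\,C_{S_n}(\vg)$. Because the $r_j$ are pairwise distinct, the centraliser $C_{S_n}(\vg)$ is generated by the cycles of $\vg$ themselves, and each cycle, being of odd length, is an even permutation. Thus $C_{S_n}(\vg)\subseteq A_n$, so the coset $c\,C_{S_n}(\vg)$ lies in $A_n$ if and only if $c$ itself does. Consequently $(\vg)_{A_n}$ is self-inverse iff $c\in A_n$, and not self-inverse iff $c$ is odd.

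Next I would exhibit an explicit $c$ and compute its sign. On a single cycle $(a_1,a_2,\dots,a_{r_j})$ the map fixing $a_1$ and swapping $a_{1+i}\leftrightarrow a_{r_j+1-i}$ for $1\leq i\leq (r_j-1)/2$ inverts that cycle and is a product of $(r_j-1)/2$ transpositions. Doing this on all cycles produces a valid $c$ of sign $(-1)^{\sum_j (r_j-1)/2}$. Hence $c$ is odd iff $\sum_j (r_j-1)/2$ is odd, which gives (i)$\Leftrightarrow$(ii).

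Finally, (ii)$\Leftrightarrow$(iii) is immediate from the observation that $(r_j-1)/2$ is odd iff $r_j\equiv 3\pmod 4$, so the parity of $\sum_j(r_j-1)/2$ equals the parity of $\#\{j:r_j\equiv 3\pmod 4\}$. The main (mild) obstacle is the middle step: verifying that $C_{S_n}(\vg)\subseteq A_n$ so that the sign of a single chosen inverter $c$ genuinely decides membership of the whole coset in $A_n$; everything else is bookkeeping of signs.
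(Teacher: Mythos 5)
Your proof is correct and takes essentially the same route as the paper's: reduce to the split case via the classical splitting criterion, note that the centraliser $C_{S_n}(\vg)$ is even, and compute the sign of an explicit element conjugating $\vg$ to $\vg^{-1}$ as a product of $\sum_j (r_j-1)/2$ transpositions. If anything, your coset argument makes explicit a point the paper's proof glosses over, namely that an arbitrary inverter differs from the explicit one only by an even centraliser element, so its parity is well defined.
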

\begin{proof}
Let $(\vg)_{A_n}$ be a self-inverse class. If $\vh \in A_n$ with $\vh \vg \vh^{-1} = \vg^{-1}$, then each of the distinct cycles of odd length $r_i$ of $\vg$ must be conjugate to its inverse by $\vh$, hence $\vh$ is a product of $\sum_{i=1}^k \frac{r_i - 1}{2}$ transpositions, so this quantity must be even. Conversely for $\vg$ with such a cycle type, define $\vh$ as above with $\vh \vg \vh^{-1} = \vg^{-1}$.
\end{proof}

Since all $\R \G$-modules are of type $\R$,
the only possible A-block structures (cf. \ref{possibleblocks}) are types I, III or V. The next lemma follows.
\begin{lem}
If $G\leq\G$ is totally orthogonal, then neither $\CGG$, nor $\bR G$ has any simple module of type $\bH$.
\end{lem}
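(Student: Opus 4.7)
The plan is to read off the conclusion directly from the classification of A-block structures in Theorem~\ref{possibleblocks}. By construction, every simple $\bR G$-module occurs as $S_a$ in the $\sA$-part of exactly one A-block of $G \leq \G$, and its type as a module is the division ring $\bF_a$ attached to that block; similarly every simple $\bR \G$-module is of type $\bF_b$ and every simple $\CGG$-module is of type $\bF_d$. Hence the total orthogonality hypothesis amounts to the statement that $\bF_b = \bR$ for every A-block of $G \leq \G$, and the lemma asks us to show that this forces $\bF_a, \bF_d \neq \bH$ in every A-block.

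Inspecting the ten rows of Table~\ref{table_block_structure}, the A-blocks with $\bF_b = \bR$ are precisely those of types I, III and V. Reading off the remaining columns: type~I has $\bF_a = \bF_d = \bR$; type~III has $\bF_a = \bR$ and $\bF_d = \bC$; type~V has $\bF_a = \bC$ and $\bF_d = \bR$. In none of these does $\bF_a$ or $\bF_d$ take the value $\bH$, which is the desired conclusion.

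I do not foresee any genuine obstacle: once Dyson's Theorem has been established, the proof is essentially an inspection of the classification table, and its brevity is the payoff of that structural result. The only point that deserves a moment's pause is that the A-blocks exhaust the simple modules of each of the algebras $\bR G$, $\bR \G$ and $\CGG$, with the type of every simple module determined by its block; this follows immediately from the fact that the central idempotents $f$ used to define the A-blocks in~\eqref{square_2} sum to $1$.
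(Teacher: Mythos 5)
Your proof is correct and is exactly the paper's argument: the paper likewise observes that total orthogonality forces $\bF_b=\bR$, so only block types I, III and V can occur, and then reads off from Table~\ref{table_block_structure} that $\bF_a$ and $\bF_d$ are never $\bH$ in those rows. Your closing remark that the idempotents $f$ sum to $1$, so every simple module of $\bR G$ and of $\CGG$ lies in some A-block with type given by $\bF_a$ resp.\ $\bF_d$, is a sensible point that the paper leaves implicit.
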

For $A_n \leq S_n$, not only this recovers the classical result that $\R A_n$ has no simple modules of quaternionic type but also shows that $\C {\ast} S_n$ has no simple modules of quaternionic type.

Given an irreducible complex representation $V$ of $G$, 
consider the four representations $V$, $\overline{V}$, $\vw \cdot V$ and $\vw \cdot \overline{V}$.
In type I,  $(\F_{\bC}(V),\F(V))=(1,1)$ and all four are isomorphic.
In type III, $(\F_{\bC}(V),\F(V))=(1,0)$ and $V\cong\overline{V}\not\cong \vw \cdot V\cong\vw\cdot\overline{V}$.
In type V, $(\F_{\bC}(V),\F(V))=(0,1)$ and  $V\cong\vw \cdot \overline{V}\not\cong \overline{V}\cong\vw \cdot {V}$.

It is well-known that $\R A_n$ does not have a simple module of type $\bC$ if and only if $n \in \{2,5,6,10,14\}$ \cite[Cor. 4.9]{ALT}. We can understand this for $\C {\ast} S_n$ now.
\begin{prop}\label{compalttype}
$A_n \leq S_n$ has no irreducible A-representation of complex type if and only if $n \in \{2,3,4,7,8,12\}$.
\end{prop}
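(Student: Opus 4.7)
The plan is to count the complex-type A-representations of $A_n \leq S_n$ using Corollary~\ref{diffcor}, which equates this number with $\#(\text{conjugacy classes of } A_n) - \#(\text{Real conjugacy classes})$. First I would analyse this difference combinatorially. A conjugacy class of $S_n$ splits into two $A_n$-classes if and only if its cycle type consists of distinct odd parts; by Corollary~\ref{Realcycle}, the two resulting $A_n$-classes rejoin into a single Real conjugacy class precisely when both are self-inverse. By Lemma~\ref{lem:realalternatingcycle} this self-inverse condition holds exactly when the number of cycle parts congruent to $3 \pmod{4}$ is even. Hence the number of complex-type A-representations of $A_n \leq S_n$ equals the number $f(n)$ of partitions of $n$ into distinct odd parts with an even number of parts $\equiv 3 \pmod{4}$.

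The remainder of the argument is a combinatorial analysis of when $f(n)=0$. For each $n \in \{2,3,4,7,8,12\}$ I would directly enumerate the relevant partitions: these are respectively $\varnothing$, $\{3\}$, $\{3,1\}$, $\{7\}$, the two partitions $\{7,1\}$ and $\{5,3\}$, and the three partitions $\{11,1\}$, $\{9,3\}$, $\{7,5\}$. Each non-empty partition listed contains exactly one part congruent to $3 \pmod{4}$, an odd count, so $f(n)=0$ in each case.

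For the converse I would exhibit an explicit witness partition for every $n$ not in the list, organised by $n \pmod{4}$. For $n \equiv 1 \pmod{4}$ (hence $n \geq 5$), the singleton $\{n\}$ has no part $\equiv 3 \pmod{4}$. For $n \equiv 2 \pmod{4}$ with $n \geq 6$, the partition $\{n-1,1\}$ has both parts $\equiv 1 \pmod{4}$. For $n \equiv 3 \pmod{4}$ with $n \geq 11$, the partition $\{n-4,3,1\}$ has two parts ($n-4$ and $3$) congruent to $3 \pmod{4}$. For $n \equiv 0 \pmod{4}$ with $n \geq 16$, the partition $\{n-9,5,3,1\}$ again has two parts ($n-9$ and $3$) congruent to $3 \pmod{4}$. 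In each case the parts are genuinely distinct positive odd integers by the stated lower bound on $n$.

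The main difficulty will be ensuring the combinatorial bookkeeping is complete: verifying that the witness constructions above cover exactly the complement of $\{2,3,4,7,8,12\}$, and that no partition is missed in the enumeration for the exceptional $n$, in particular for $n \in \{8,12\}$ where several partitions exist and each must be checked to have odd count. The structural reduction from the representation-theoretic statement to this partition-counting problem is a clean application of the machinery already developed in the section (Corollary~\ref{diffcor}, Corollary~\ref{Realcycle} and Lemma~\ref{lem:realalternatingcycle}), and poses no essential difficulty.
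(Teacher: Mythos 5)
Your proposal is correct and follows exactly the paper's route: reduce via Corollary~\ref{diffcor} (together with Corollary~\ref{Realcycle} and Lemma~\ref{lem:realalternatingcycle}) to counting partitions of $n$ into distinct odd parts with an even number of parts $\equiv 3 \pmod 4$, then enumerate for small $n$ and give explicit witnesses by residue of $n$ modulo $4$. You have simply filled in the case analysis and witness partitions that the paper's proof leaves implicit, and your bookkeeping (witnesses covering all $n\ge 2$ outside $\{2,3,4,7,8,12\}$, and the exhaustive lists for the exceptional $n$) checks out.
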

\begin{proof}
By  Corollary~\ref{diffcor}, existence an irreducible A-representation of type $\bC$ is equivalent to existence of a partition of $n$ of distinct odd lengths $r_j$, where the number of $r_j \equiv 3 \! \mod 4$ is even. For $n\leq 12$ one can list the partitions, and such a partition can be found for all $n \geq 13$ by considering the cases of $n \!\! \mod 4$. 
\end{proof}

We finish the paper by joining together the analysis of the possible A-blocks in the totally orthogonal case with Proposition~\ref{sqsum1}.
Pick an element $\vg \in G$.
Using the notation of Proposition~\ref{sqsum1}, the number of elements of $\G\setminus G$ squaring to $\vg$ minus the number of elements of $G$ squaring to $\vg$ is given by:
\begin{align}
  \sum_{j=1}^n \F(\chi_j) \chi_j(\vg) - \sum_{j=1}^n \F_{\C}(\chi_j) \chi_j(\vg) =
  \sum_{j\; :\; \chi_j = \overline{\chi_j}} \chi(\vg) - \sum_{j\; :\;\vw \cdot \chi_j = \overline{\chi_j}} \chi_j(\vg).
\end{align}

\appendix
\section{Post-print corrections}
We would like to thank Hugh Osborn for spotting some typos in the final version. Hereby we correct them.

In comment (X) to Equation~\eqref{sqb_AC} the group $Q_8 \leq Q_8\rtimes G_{32}^8$ is a typo. It should be
$Q_8\times C_{2}\leq G_{32}^8$ as stated in Table~\ref{table_block_structure}.

In comment (IX) to Equation~\eqref{uqb_AC} there is a confusion between two different groups.
The semidirect product  $Q_8\rtimes C_2$ is the group $G_{16}^{8}$.
It is different from the Pauli group $G_{16}^{13}$.
Both groups contain $Q_8$ as a subgroup of index 2.
For both groups the faithful representation $\bC^2$ of $Q_8$ yields an A-block of type (IX) with $n=1$.
These two are 
the only two groups of order 16 that have an A-representation of type (IX).

\bibliography{arepsbiblio}{}
\bibliographystyle{plain}

\end{document}